\newcommand{\Iref}[2]{\cite[ #1 \ref{GRTI-#2}]{GRTI}}
\renewcommand\thepart{\Roman{part}}
\def\Amp{\mathrm{Amp}}
\def\Cone{\mathrm{Cone}^*}
\def\rank{\mathrm{rank}}
\def\rk{\mathrm{rank}}
\def\Todd{\mathrm{Todd}}
\def\ch{\mathrm{ch}}
\def\part#1{%
%  \par\break % Page break
  \vskip .02\vsize % Vertical shift
  \refstepcounter{part}% Next part
  \addcontentsline{toc}{part}{Part \thepart:\ #1}
  {\centering\large \textbf{Part \thepart}. #1\par}% 
  \vskip .01\vsize
}
\newcommand{\Hom}{\operatorname{Hom}}
\newcommand{\Pos}{\operatorname{Pos}}
\newcommand{\Hilb}{\operatorname{Hilb}}
\newcommand{\hilb}{\operatorname{hilb}}
\newcommand{\vol}{\operatorname{vol}}
\newcommand*{\leftrightdash}[1][]{\mathbin{\tikz [baseline=-0.25ex,-latex, densely dashed,#1] \draw [<->] (0pt,0.5ex) -- (1.3em,0.5ex);}}
\theoremstyle{plain}
\newtheorem{theorem}{Theorem}[section]
\newtheorem*{theorem*}{Theorem}
\newtheorem{lemma}[theorem]{Lemma}
\newtheorem*{conjecture*}{Conjecture}
\newtheorem{corollary}[theorem]{Corollary}
\newtheorem*{corollary*}{Corollary}
\newtheorem{proposition}[theorem]{Proposition}
\theoremstyle{definition}
\newtheorem*{definition*}{Definition}
\numberwithin{equation}{section} \setcounter{tocdepth}{1}
\def\ifdraft{\ifdim\overfullrule>\z@
  \expandafter\@firstoftwo\else\expandafter\@secondoftwo\fi}
\newsavebox{\ieeealgbox}
\newcommand{\N}{\mathbb{N}}
\newcommand{\Q}{\mathbb{Q}}
\newcommand{\R}{\mathbb{R}}
\newcommand{\Z}{\mathbb{Z}}
\theoremstyle{definition}
\newtheorem{definition}[theorem]{Definition}
\newtheorem{remark}[theorem]{Remark}
\newtheorem*{remark*}{Remark}
\newtheorem{comment2}[theorem]{Comment}
\title[Semi-continuity of Stability for Sheaves ]{Semi-continuity of Stability for Sheaves and \\ Variation of Gieseker Moduli Spaces}
\date{\today}
\author[D.~Greb]{Daniel Greb}
  \address{DG: Essener Seminar f\"ur Algebraische Geometrie und Arithmetik, Fakult\"at f\"ur Mathematik, Universit\"at Duisburg-Essen, 45117 Essen, Germany}
  \email{daniel.greb@uni-due.de}
\author[J.~Ross]{Julius Ross}
  \address{JR: Department of Pure Mathematics and Mathematical Statistics, University of Cambridge, Wilberforce Road, Cambridge, CB3 0WB, UK}
  \email{j.ross@dpmms.cam.ac.uk}
\author[M.~Toma]{Matei Toma}
\address{MT: Institut de Math\'ematiques \'Elie Cartan, Universit\'e de Lorraine, B.P. 70239, \newline 54506 Vandoeuvre-l\`es-Nancy Cedex,
France}
\email{Matei.Toma@univ-lorraine.fr}
\subjclass[2010]{14D20, 14J60, 32G13; 14L24, 16G20.}
\keywords{Gieseker stability, variation of moduli spaces, chamber structures, boundedness, moduli of quiver representations, semistable sheaves on K\"ahler manifolds.} 
\thanks{}
\begin{document}
\vspace{-0.2cm}

\begin{abstract}
We investigate a semi-continuity property for stability conditions for sheaves that is important for the problem of variation of the moduli spaces as the stability condition changes.  We place this in the context of a notion of stability previously considered by the authors, called multi-Gieseker-stability, that generalises the classical notion of Gieseker-stability to allow for several polarisations.  As such we are able to prove that on smooth threefolds certain moduli spaces of Gieseker-stable sheaves are related by a finite number of Thaddeus-flips (that is flips arising for Variation of Geometric Invariant Theory) whose intermediate spaces are themselves moduli spaces of sheaves.
\end{abstract}
\maketitle

\tableofcontents
\newcommand{\Space}{\,\,\,\,\,\,\,\,\,\,\,\,\,\,\,\,}

\addtocontents{toc}{\protect\setcounter{tocdepth}{0}}

\section*{Introduction}
The construction of moduli spaces of sheaves usually requires a choice of a stability condition, and in most cases different choices yield different spaces. The purpose of this paper is to  investigate a ``semi-continuity" property for such stability conditions which, as we shall see, plays an important role in the variation of these  moduli spaces.

To begin the discussion, we somewhat vaguely denote a choice of stability condition by $\sigma$ which is allowed to vary in some space $\Sigma$.  The best-known examples are slope-stability  and Gieseker-stability, in which case $\sigma$ is a choice of (rational) ample line bundle and so $\Sigma$ is the ample cone of the base.   More interesting examples arise from the stability conditions of  Joyce \cite{JoyceII}, that include the (twisted) multi-Gieseker stability previously considered by the authors \cite{GRTI}.

To discuss the variation problem, consider a path $\sigma(t)\in \Sigma$ for $t\in [0,1]$ that has a single ``critical value" at the point $t=\overline{t}$.  By this we mean that stability is unchanged as $t$ varies within either the interval $[0,\overline{t})$ or the interval $(\overline{t},1]$.  Thus we are left with three moduli spaces that we denote by   $\mathcal M_{\sigma(0)}$, $\mathcal M_{\sigma(\overline{t})}$ and $\mathcal M_{\sigma(1)}$ and we wish to understand how these are related.  In the best possible case, there would exist a diagram of the form
\begin{equation}\label{eq:flipintro}\tag{$\ast$}
\begin{gathered}
\begin{xymatrix}{
\mathcal M_{\sigma(0)} \ar[rd]& & \ar[ld] \mathcal M_{\sigma(1)}\\
& \mathcal M_{\sigma(\overline{t})},&
}
\end{xymatrix}
\end{gathered}\end{equation}
where the morphisms are induced from functorial properties of these spaces.     For this to be possible in higher dimension, we immediately face two problems:

\begin{enumerate}
\item (Existence of Moduli at Critical Values) We need that the moduli space $\mathcal M_{\sigma(\overline{t})}$ exists; that is we need to be able to construct the moduli spaces for critical values in $\Sigma$.  
\item (Semicontinuity) Assuming (1)  holds, the existence of a diagram of the form \eqref{eq:flipintro} requires that every sheaf that is semistable with respect to $\sigma(0)$ (resp.\ $\sigma(1)$) must also be semistable for $\sigma(\overline{t})$.  
\end{enumerate}

The existence of the moduli spaces at critical values can be problematic (on manifolds of dimension at least 3) for both slope-stability and Gieseker-stability, as the critical point in the ample cone at which stability changes may be irrational (see the discussion in \cite{GRTI}).    Furthermore semi-continuity will generally fail for stability conditions derived from a polynomial inequality (such as Gieseker-stability, see Remark \ref{rmk:closedness})) that are natural from the viewpoint of algebraic geometry.
 We will explain towards the end of the introduction why a naive approach based on the theory of Variation of GIT as developed in \cite{Thaddeus} and \cite{DolgachevHu} fails to solve these problems.

In this paper we study a notion of stability for sheaves which is restrictive enough to be able to produce moduli spaces at critical values (that are even projective) yet flexible enough to ensure that one can (conjecturally) always arrange so this semi-continuity holds.   As will be discussed presently, we prove this conjecture in some particular cases, yielding an application to the variation problem for the moduli spaces of Gieseker stable sheaves. 

%\begin{definition*}
%We say that $(\sigma(t))_{t\in [0,1]}$ is \emph{semi-continuous} if the following holds: if a sheaf $E$ of a given topological type  is semistable with respect to $\sigma(t_i)$ for some sequnce $t_i\in [0,1]$ that tends to $\overline{t}\in [0,1]$ as $i$ tends to infinity, then $E$ is also semistable with respect to $\sigma(\overline{t})$.
%\end{definition*}

%For slope-stability it is immediate that this semi-continuity holds, since the slope of a sheaf is continuous as the choice of ample line bundle varies.  But for stability conditions that are derived from polynomial inequalities (such as Gieseker stability with arises naturally within algebraic geometry) this need no longer be the case, and so there is no reason to expect a diagram of the form \eqref{eq:flipintro}. 

\subsection*{Twisted Multi-Gieseker-stability}

In previous work of the authors \cite{GRTI} we introduce a notion of ``multi-Gieseker stability" that extends the usual notion of Gieseker-stability of sheaves to the case of several polarisations, allowing us to study the variation problem in this context.   For the purpose of this paper we require a slight generalisation that permits a fixed twisting of the Hilbert polynomials involved.  To define this precisely, suppose that $L_1,\ldots, L_{j_0}$ are fixed ample line bundles on a smooth projective manifold $X$ and that $B_1,\ldots,B_{j_0}$ are a further collection of fixed line bundles. Given $\sigma_1,\ldots,\sigma_{j_0}\in \mathbb R_{\ge 0}$, not all zero, we shall say a torsion-free coherent sheaf $E$ on $X$ is \emph{semistable} with respect to this data if for all proper subsheaves $E'\subset E$ the inequality 
\begin{equation*}
 \frac{\sum_{j} \sigma_j \chi(E'\otimes L_{j}^{k}\otimes B_{j})}{\rank(E')} \le  \frac{\sum_{j} \sigma_j \chi(E\otimes L_{j}^{k}\otimes B_{j})}{\rank(E)}\label{eq:introstability}
 \end{equation*}
holds for all $k$ sufficiently large.    We refer to $\sigma=(\sigma_1,\ldots,\sigma_{j_0})$ as a \emph{stability parameter} which we say is \emph{bounded} if the set of semistable sheaves of a given topological type is bounded.

This definition generalises that of \cite{GRTI} in which all the $B_j$ were taken to be trivial, and introduces no new difficulties so all the results from \cite{GRTI} extend to this setting (see Section \ref{sec:twisted} for further discussion).  In particular, semistable sheaves have projective moduli:

\begin{theorem*}[Existence of projective moduli spaces of twisted semistable sheaves]
Suppose $\sigma$ is a bounded stability parameter.    Then, there exists a projective coarse moduli space $\mathcal M_{\sigma}$ of semistable sheaves of a given topological type.   The moduli spaces $\mathcal M_{\sigma}$ contain an open set parameterizing stable sheaves, and the points on the boundary correspond to $S$-equivalence classes of sheaves.  
\end{theorem*}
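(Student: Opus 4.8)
The plan is to adapt, essentially verbatim, the Geometric Invariant Theory construction used in \cite{GRTI} for the untwisted case, keeping track of the places where the twisting line bundles $B_j$ enter. That construction relates semistable sheaves to semistable representations of a suitable quiver, in the spirit of Álvarez-Cónsul–King, and then forms a GIT quotient; the point to verify is that twisting affects only the Hilbert polynomials that appear and leaves the structure of the argument intact. Since $B_j$ is an arbitrary line bundle, it cannot in general be absorbed into a reparameterisation of the $L_j$, so one genuinely re-runs the construction rather than reducing to the untwisted theorem.

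First I would record the boundedness consequences: as $\sigma$ is bounded, there is an integer $m$, depending only on the topological type, such that for every $\sigma$-semistable sheaf $E$ and every $j$ the sheaf $E\otimes L_j^{m}\otimes B_j$ is globally generated with vanishing higher cohomology, with the analogous statement holding in families; the proof is the usual uniform-regularity/limiting argument, unaffected by the $B_j$. Fixing such an $m$ together with a much larger $n$, one builds a parameter scheme $R$ as a locally closed subscheme of a product over $j$ of Quot schemes parameterising quotients of $H\otimes(L_j^{-m}\otimes B_j^{-1})$, carrying an action of $G=\prod_j\mathrm{GL}$ whose orbits are the isomorphism classes of the relevant sheaves. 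One then defines the embedding functor $E\mapsto\bigl(H^0(E\otimes L_j^{m}\otimes B_j),\,H^0(E\otimes L_j^{n}\otimes B_j)\bigr)_j$ into representations of the appropriate bipartite quiver, with arrows the multiplication maps $H^0(L_j^{\,n-m})\otimes H^0(E\otimes L_j^{m}\otimes B_j)\to H^0(E\otimes L_j^{n}\otimes B_j)$ together with those relating different $j$; for $m\ll n$ in the controlled range this is a $G$-equivariant locally closed immersion of $R$ into the representation space. The heart of the matter is the GIT computation: with the linearisation on the representation space dictated by the weights $\sigma_j$ and the values $\chi(E\otimes L_j^{m}\otimes B_j)$, $\chi(E\otimes L_j^{n}\otimes B_j)$, the Hilbert–Mumford criterion translates GIT-(semi)stability of the representation attached to $E$ into the twisted multi-Gieseker inequality displayed in the introduction, because the relevant $h^0(E'\otimes L_j^{k}\otimes B_j)$ for subsheaves $E'\subset E$ equal $\chi(E'\otimes L_j^{k}\otimes B_j)$ in that range. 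The quotient $\mathcal M_\sigma=R^{ss}/\!\!/G$ is then projective by construction, the stable locus is open, and boundary points correspond to $S$-equivalence classes since closed orbits in the semistable locus are exactly the polystable sheaves.

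The step that most needs care — and the only place where the twist does real work — is verifying that the Hilbert–Mumford computation still yields precisely the twisted inequality, and that nothing used implicitly in \cite{GRTI} (the global generation making the embedding functor exact on the bounded family, the correspondence between subrepresentations and saturated subsheaves, the positivity entering the choice of $n$ relative to $m$) is lost under twisting. Because twisting by $B_j$ commutes with all the functorial operations involved and only shifts Hilbert polynomials by a fixed amount, I expect each of these to go through with purely cosmetic modifications, which is what the deferral to Section~\ref{sec:twisted} signals.
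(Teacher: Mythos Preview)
Your proposal is correct and matches the paper's own treatment: the paper likewise declines to repeat the proofs and simply records that the construction of \cite{GRTI} goes through once one replaces $T$ by $\bigoplus_{ij} L_i^{-n}\otimes B_i^{-1}\oplus L_j^{-m}\otimes B_j^{-1}$, the arrow spaces by $H_{ij}=H^0(X,L_i^{-n}\otimes L_j^m\otimes B_i^{-1}\otimes B_j)$, and $(r,\underline{L})$-regularity by the twisted $(r,\underline{L},\underline{B})$-regularity. The one detail your sketch leaves implicit but the paper spells out is that the arrows between \emph{different} indices $i\neq j$ carry the factor $B_i^{-1}\otimes B_j$, not merely a power of the $L_j$'s; otherwise your outline is precisely what Section~\ref{sec:twisted} asserts.
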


A noteworthy aspect of this construction is that the critical values (or ``walls") in $\Sigma: = (\mathbb R_{>0})^{j_0}\setminus \{0\}$ that witness the change of stability are given by rational linear functions.  Thus there is no loss in assuming that all the $\sigma_j$ are rational, from which we conclude that the desired moduli spaces exist even at the critical values.

 Hence we turn our attention to semi-continuity.  Fixing the $L_j$ and $B_j$ as above, we say that a function $\sigma(t) = (\sigma_1(t),\ldots,\sigma_{j_0}(t)) \in \mathbb R_{\ge 0}^{j_0}\setminus \{0\}$ for $t\in [0,1]$ is a \emph{stability segment} if each $\sigma_j$ is linear and
 $$\sum\nolimits_j \vol(L_j) \sigma_j(t) = 1 \quad \text{ for all } t \in [0,1]$$
 where $\vol(L_j) : = \int_X c_1(L_j)^d$ and $d=\dim X$.
 
\begin{definition*}[Uniform stability]
We say a stability segment $(\sigma(t))_{t\in [0,1]}$  is \emph{uniform} if for any torsion-free sheaf $E$ we have
$$  \frac{\sum_{j} \sigma_j \chi(E\otimes L_{j}^{k}\otimes B_{j})}{\rank(E)}=  \frac{k^d}{d!} + a_{d-1}(E) k^{d-1} + \cdots + a_1(E)k + a_0(E,t) \text{ for }t\in [0,1]$$
where  $a_{d-1}(E),\ldots,a_{1}(E)$ are independent of $t$ and $a_0(E,t)$ is linear in $t$.

\end{definition*}

It is not hard to see that a uniform stability segment has the semi-continuity property (Remark \ref{rmk:closedness}).   Roughly speaking then, the two main results of this paper say that uniform stability is (a) sufficiently strong to be able to control the variation of the moduli spaces as $t$ varies and (b) sufficiently flexible so that such segments can actually be constructed.  \medskip

We discuss each of these in turn.  For the first, by point of terminology, we say a  stability segment $(\sigma(t))_{t\in [0,1]}$ is \emph{bounded} if the set of sheaves of a given topological type that are semistable with respect to $\sigma(t)$ for some $t\in [0,1]$ is bounded. Throughout the paper by the topological type of a sheaf $E$ we shall mean its homological Todd class $\tau_X(E)\in B(X)_\Q:=B(X)\otimes_\Z\Q$, where $B(X)$ is the group of cycles on $X$ modulo algebraic equivalence, see \cite[Def.~1.4]{GRTI}.

\begin{theorem*}(Thaddeus-flips through moduli spaces of sheaves I, Theorem \ref{thm:intermediate})
Let $X$ be smooth and projective, let $\tau\in B(X)_{\mathbb Q}$ and $(\sigma(t))_{t\in [0,1]}$ be a bounded uniform stability segment.  Then given any $t'<t''$ in $(0,1)$\footnote{The assumption that $t'$ and $t''$ lie in the open-interval $(0,1)$ will be relaxed in a sequel to include the endpoints.} the spaces $\mathcal M_{\sigma(t')}$ and $\mathcal M_{\sigma(t'')}$ are connected by a finite collection of Thaddeus-flips of the form
\[\begin{xymatrix}{
 \mathcal{M}_{\sigma(t_i)} \ar[rd] &  & \mathcal{M}_{\sigma(t_{i+1})} \ar[ld] \\
                 &        \mathcal{M}_{\sigma(t'_i)}.               & }
\end{xymatrix}
\]

\end{theorem*}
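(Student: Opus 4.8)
The plan is to realise all of the moduli spaces $\mathcal M_{\sigma(t)}$ simultaneously as GIT quotients of one parameter space, so that their variation in $t$ becomes an instance of Variation of GIT in the sense of Thaddeus \cite{Thaddeus} and Dolgachev--Hu \cite{DolgachevHu}. Because the segment is bounded, the sheaves of topological type $\tau$ that are $\sigma(t)$-semistable for some $t\in[0,1]$ form a bounded family, so for $m\gg 0$ each is $m$-regular and presented as a quotient $\mathcal O_X(-m)^{\oplus N}\twoheadrightarrow E$ with $N$ fixed; this yields a single locally closed $\mathrm{SL}(V)$-invariant subscheme $R$ of a suitable Quot scheme, where $V=\mathbb C^N$, sitting inside a fixed projective completion $\bar R\supseteq R$. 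Following the GIT construction of \cite{GRTI}, as extended to the twisted setting in Section~\ref{sec:twisted}, for each $t$ there is an $\mathrm{SL}(V)$-linearised line bundle $\mathcal L_t$ on $\bar R$ — assembled from the embeddings of $R$ into a product $\prod_j\mathrm{Grass}$ of Grassmannians, with weights proportional to the $\sigma_j(t)$ — whose GIT-semistable locus meets $R$ exactly in the points representing $\sigma(t)$-semistable sheaves, and with $\mathcal M_{\sigma(t)}=R^{ss}(\mathcal L_t)/\!\!/\mathrm{SL}(V)$. Since each $\sigma_j(t)$ is linear in $t$, the class of $\mathcal L_t$ moves along a line segment in the $\mathrm{SL}(V)$-ample cone, which is precisely the input data for VGIT.

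First I would show that only finitely many $t\in[t',t'']$ are \emph{walls}, i.e.\ points where $R^{ss}(\mathcal L_t)$ jumps. This follows from boundedness together with the wall-and-chamber analysis of \cite{GRTI}: the set of $\sigma(t)$-semistable members of the bounded family can only change when $\sigma$ meets the zero locus of one of the finitely many rational linear functions coming from the reduced Hilbert polynomials of the relevant subsheaves, and along the segment each such function is either identically zero or has only finitely many zeros. I would then choose a partition $t'=t_0<t_1<\dots<t_N=t''$ that interleaves these walls, so that $\sigma(t)$-semistability — and hence $\mathcal M_{\sigma(t)}$ — is constant on each half-open interval $[t_i,t_i')$ and $(t_i',t_{i+1}]$, with $t_i'$ the unique wall (if any) in $(t_i,t_{i+1})$.

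Next I would check that crossing a single wall produces the asserted diagram, for which the crucial point is the inclusion $R^{ss}(\mathcal L_t)\subseteq R^{ss}(\mathcal L_{t_i'})$ for every $t$ on either side of $t_i'$. At the level of sheaves this is exactly the semi-continuity enjoyed by a uniform segment: by the definition of uniform stability, for a torsion-free $E$ and a subsheaf $E'\subset E$ the two reduced Hilbert polynomials differ only in the $t$-linear constant term $a_0$, so the non-strict inequality defining $\sigma(t)$-semistability is closed in $t$ (Remark~\ref{rmk:closedness}); hence a sheaf semistable for $\sigma(t)$ with $t$ close to $t_i'$ remains semistable for $\sigma(t_i')$. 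By functoriality of the GIT quotient, the resulting inclusions of semistable loci induce projective morphisms
\[
\mathcal M_{\sigma(t_i)}\;\longrightarrow\;\mathcal M_{\sigma(t_i')}\;\longleftarrow\;\mathcal M_{\sigma(t_{i+1})}
\]
which restrict to isomorphisms over the open subset of $\mathcal M_{\sigma(t_i')}$ parameterising $\sigma(t_i')$-stable sheaves. Combined with the finiteness of the walls and the constancy of $\mathcal M_{\sigma(t)}$ on the intervening chambers, this exhibits $\mathcal M_{\sigma(t')}$ and $\mathcal M_{\sigma(t'')}$ as joined by a finite chain of Thaddeus-flips of the stated shape.

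The step I expect to be the main obstacle lies not in the VGIT formalism but in setting it up uniformly: one must verify that a single $m$ can be chosen so that the Le Potier--Simpson-type equivalence between GIT-(semi)stability of a quotient and $\sigma(t)$-(semi)stability of the underlying sheaf, as established in \cite{GRTI}, holds for \emph{every} $t$ in the segment at once — in particular at the (rational) walls, where $\mathcal M_{\sigma(t)}$ is already known to exist — and that the linearisations $\mathcal L_t$ extend to, and stay ample on, one fixed completion $\bar R$ as $t$ ranges over $[t',t'']$, with the induced $S$-equivalence relations matching up across each wall. This uniform control over $m$ and over the completion is where the boundedness and uniformity hypotheses do the essential work; it is also the reason for restricting to $t',t''\in(0,1)$, since at $t=0$ or $t=1$ some $\sigma_j$ may vanish and the corresponding Grassmannian factor degenerates.
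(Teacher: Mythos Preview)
Your high-level strategy---a single master parameter space carrying a one-parameter family of linearisations, followed by VGIT---is exactly the paper's, and you correctly identify the real work as making the Le~Potier--Simpson comparison of sheaf-semistability with GIT-semistability hold \emph{uniformly} in $t$ for fixed numerical data. That is indeed where the boundedness and uniformity hypotheses are spent: the paper proves a Uniform Le~Potier--Simpson Theorem (Theorem~\ref{thm:lepotiersimpsonuniform}) and a Uniform Comparison of Semistability (Theorem~\ref{thm:semistabilityuniform}) showing that for $m\gg n\gg p\gg 0$ chosen once, $\sigma(t)$-semistability of $E$ matches semistability of the associated object for \emph{every} $t\in[t',t'']$, including the walls; after that the VGIT argument is short (Section~2.3).

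Two points of divergence are worth flagging. First, the paper does not use your Quot-scheme-in-Grassmannians model but rather the quiver-representation space of \cite{GRTI}, with GIT taken relative to characters $\chi_{\theta_{\sigma(t)}}$. In that framework your claim that ``the class of $\mathcal L_t$ moves along a line segment'' is \emph{false}: the assignment $\sigma\mapsto\theta$ is genuinely non-linear (Remark~\ref{remark:chambers}), so the linear walls in character-space need not pull back to linear walls in $\sigma$-space. What uniformity actually buys is not linearity of the path of linearisations but rather that the \emph{relevant} GIT walls---those witnessed by submodules of some $\Hom(T,E)$---are detected by $(n,\underline{L},\underline{B})$-regular subsheaves $E'\subset E$ (Lemma~\ref{lem:crucial}), and for these the cross-multiplied function $f_{E'}(t)=P_{E'}^t(n)P_E^t(m)-P_E^t(n)P_{E'}^t(m)$ is linear in $t$ by Lemma~\ref{lem:uniformcompare}(2). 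Second, your route to the inclusion $R^{ss}(\mathcal L_t)\subset R^{ss}(\mathcal L_{t_i'})$ via sheaf-level semicontinuity is not how the paper proceeds: it instead shows directly that, with $m,n,p$ fixed, the GIT-semistable locus equals $Q^{[\sigma(t)\text{-}ss]}$ for all $t$, so the Thaddeus-flip diagram is produced by VGIT on the quiver side and then translated back to sheaves. Your argument would still need the uniform comparison to know the GIT quotient at the wall really is $\mathcal M_{\sigma(t_i')}$, so the shortcut does not save any work.
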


Thus uniform stability segments connect moduli spaces by Thaddeus-flips \emph{through moduli spaces of sheaves}.  Having established this result we will turn to the problem of constructing uniform stability segments.  We show that on surfaces and threefolds this is possible, allowing us to conclude the following:

\begin{theorem*}[Existence of Uniform Stability Segments, Theorems \ref{thm:2intermediate} and \ref{thm:3intermediate}]
Suppose $X$ is smooth and projective, let $\tau\in B(X)_{\mathbb Q}$, and $L',L''\in \Amp(X)$. Suppose also that either
\begin{enumerate}
\item $X$ has dimension 2 or
\item $X$ has dimension 3 and $L',L''$ are separated by a single wall of the first kind.
\end{enumerate}
 Then, the moduli spaces $\mathcal M_{L'}$ and $\mathcal M_{L''}$ of Gieseker-semistable torsion-free sheaves of topological type $\tau$ are related by a finite number of Thaddeus-flips through spaces of the form $\mathcal M_{\sigma_i}$ for some bounded stability parameters $\sigma_i$.\end{theorem*}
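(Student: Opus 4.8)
The plan is to reduce to a single wall-crossing and then to build a bounded uniform stability segment that interpolates between the two Gieseker chambers. Recall first that $\mathcal M_{L}$ depends only on the chamber of $\Amp(X)$ containing $L$ in the locally finite rational wall-and-chamber decomposition attached to $\tau$; in dimension $2$ one may therefore join $L'$ to $L''$ by a broken line whose vertices lie in chambers and which crosses the walls one at a time, so it suffices to treat the case in which $L'$ and $L''$ are separated by a single wall $W$ — which is exactly the hypothesis imposed in dimension $3$. Granting this, the theorem will follow from Theorem~\ref{thm:intermediate} as soon as we produce fixed ample line bundles $L_1,\dots,L_{j_0}$ (which we may take to include $L'$ and $L''$) and fixed line bundles $B_1,\dots,B_{j_0}$, a bounded uniform stability segment $(\sigma(t))_{t\in[0,1]}$ for this data, and points $t',t''\in(0,1)$ with $\mathcal M_{\sigma(t')}\cong\mathcal M_{L'}$ and $\mathcal M_{\sigma(t'')}\cong\mathcal M_{L''}$.

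For the construction, write $\sigma_j(t)=\sigma_j^0+t\dot\sigma_j$ and expand $\sum_j\sigma_j(t)\chi(E\otimes L_j^{k}\otimes B_j)/\rank(E)$ by Riemann--Roch. Uniformity of the segment is equivalent to the vanishing, for $e=1,\dots,d-1$, of the coefficient of $t$ in the degree-$e$ part, which, after separating the $E$-dependent from the $E$-independent contributions, becomes the linear conditions $\sum_j\dot\sigma_j L_j^{\,e}=0$ in $N^e(X)_{\R}$ together with finitely many scalar conditions on the $B_j$ and the normalisation $\sum_j\dot\sigma_j\vol(L_j)=0$. In particular the ``effective polarisation'' $D:=\sum_j\sigma_j(t)L_j$ is then a $t$-independent ample $\mathbb Q$-class. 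The role of the auxiliary $L_j$ is to make this linear system admit a nonzero solution $\dot\sigma$ for which $\sigma(t)$ stays in $(\mathbb R_{\ge0})^{j_0}\setminus\{0\}$ throughout $[0,1]$; in addition we arrange that $D$ lies on the wall $W$ and on no other wall attached to $\tau$. Because $D$ then sits strictly between the chambers of $L'$ and $L''$, for any pair $E'\subset E$ of sheaves of the relevant (bounded) numerical types the comparison of reduced multi-Hilbert polynomials is decided at the level of the coefficients $a_{d-1},\dots,a_1$ — and there agrees simultaneously with $L'$- and with $L''$-Gieseker stability — except for the finitely many $W$-critical pairs, for which it is governed by the $t$-linear coefficient $a_0(\cdot,t)$. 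Choosing the sign of $\dot\sigma$ and tuning the $B_j$ so that $a_0(\cdot,t)$ changes sign exactly once, at some $\bar t\in(0,1)$, and in the direction for which the $W$-critical pairs are resolved on the $L'$-side for $t<\bar t$ and on the $L''$-side for $t>\bar t$, one gets that $\sigma(t')$-semistability and $S$-equivalence coincide with those of $L'$-Gieseker stability for $t'\in(0,\bar t)$, and with $L''$-Gieseker stability for $t''\in(\bar t,1)$; hence the moduli functors, and so the coarse spaces, agree: $\mathcal M_{\sigma(t')}\cong\mathcal M_{L'}$ and $\mathcal M_{\sigma(t'')}\cong\mathcal M_{L''}$.

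It remains to verify boundedness of the segment. Since $[0,1]$ is compact and the effective polarisation $D$ is a fixed ample class, every $\sigma(t)$-semistable sheaf of type $\tau$ is $\mu_D$-semistable up to a bounded perturbation of its lower-order Hilbert data, so the family of all such sheaves, over all $t\in[0,1]$, is bounded by the boundedness and restriction results of \cite{GRTI}. Thus $(\sigma(t))_{t\in[0,1]}$ is a bounded uniform stability segment, and Theorem~\ref{thm:intermediate} connects $\mathcal M_{\sigma(t')}$ and $\mathcal M_{\sigma(t'')}$, hence $\mathcal M_{L'}$ and $\mathcal M_{L''}$, by a finite chain of Thaddeus-flips through moduli spaces $\mathcal M_{\sigma_i}$ of sheaves. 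Iterating over the finitely many walls of the path in the dimension-$2$ case finishes the argument.

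The main obstacle is the simultaneous realisation required in the second step: one must supply enough polarisations for the uniformity system $\{\sum_j\dot\sigma_j L_j^{\,e}=0\}_{e=1}^{d-1}$ (plus normalisation) to have a nonzero solution whose effective polarisation $D$ meets precisely one wall of the $\tau$-structure, while at the same time choosing the twists $B_j$ so that the constant term $a_0(\cdot,t)$ produces exactly one, correctly oriented, sign change on the $W$-critical pairs. This is where the hypotheses enter: only for $d\le 3$, and in $d=3$ only for a wall of the \emph{first} kind, is the decisive comparison governed entirely by $c_1$-type intersection numbers that can be steered by the choice of the $L_j$ and $B_j$; a wall of the second kind would involve $c_2$-type invariants lying beyond this reach. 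Keeping the segment bounded, uniformly in $t$, is the remaining technical input.
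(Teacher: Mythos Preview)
Your outline is roughly correct for the surface case and matches the paper's construction there: one takes both ample line bundles equal to a rational point $\overline{L}$ on the single wall and lets the twists be high powers $L_0^a, L_1^a$ of the two chamber polarisations; for $a\gg 0$ this gives a genuinely uniform segment whose endpoints recover Gieseker stability with respect to $L_0$ and $L_1$.

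For threefolds, however, your proposal has a genuine gap. You assert that a single bounded uniform segment can be built that interpolates directly between $L'$- and $L''$-Gieseker stability, but you only list the constraints such a segment would have to satisfy and declare that the hypothesis makes them solvable. The obstruction is concrete. Uniformity forces both the curve class $\gamma=\sum_j\rank(B_j(t))c_1(L_j)^2$ and the $h_2$-coefficient of $p^{\sigma(t)}_{F\subset E}$ (in the notation of \eqref{eq:h1}) to be $t$-independent. To cross a wall $W$ of the first kind, your fixed $\gamma$ must lie on $W$, so $h_1=0$ for every $W$-critical pair $F\subset E$; the comparison then drops to $h_2$, which is also constant in $t$. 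For $\sigma(t')$-stability to agree with $L'$-Gieseker (one sign) and $\sigma(t'')$-stability with $L''$-Gieseker (the opposite sign), you would therefore need $h_2=0$ for \emph{every} $W$-critical pair simultaneously. This is a system of constraints involving all the classes $\hilb_2(F,E)$ for such pairs, and you have not shown it is consistent --- your last paragraph acknowledges exactly this but does not resolve it. (Your ``effective polarisation $D=\sum_j\sigma_j(t)L_j$'' is also not quite the right object: what uniformity fixes are the intersection classes $\sum_j\rank(B_j)c_1(L_j)^e$ for varying $e$, not a single divisor class.)

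The paper sidesteps this by a three-step zooming procedure rather than a single uniform segment. One first takes the obvious non-uniform segment $\sigma(t)=(L_0,L_1;(1-t)/\vol(L_0),\,t/\vol(L_1))$, which is open because $L_0,L_1$ are general. At each multi-Gieseker wall $\overline t$ of $\sigma(\cdot)$ one builds a second segment $\eta(s)$ using twists $B_j(s)=\sigma_j(\overline t)\bigl(sL_j^{a\sigma_j(t_1)/\sigma_j(\overline t)}+(1-s)L_j^{a\sigma_j(t_0)/\sigma_j(\overline t)}\bigr)$; this is still not uniform, but its $k^2$-coefficient is $s$-independent, and a Matsuki--Wentworth argument with $a\gg 0$ matches its endpoints to $\sigma(t_0),\sigma(t_1)$. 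Finally, at each wall $\overline s$ of $\eta(\cdot)$ one constructs, via a rather delicate choice of further twists (Lemma~\ref{lem:finaltwist}), a third segment $\zeta(r)$ that \emph{is} uniform and whose endpoints match $\eta(s_0),\eta(s_1)$; only here does Theorem~\ref{thm:intermediate} apply. The hypothesis that $L_0,L_1$ are separated by a single wall of the first kind enters precisely at this last step (Lemma~\ref{problem}): it guarantees that when $u_1=u_2(s_0)=0$ one can conclude $\hilb_2(F,E)\cdot c_1(L_j)=0$, which is what makes the endpoint matching for $\zeta$ go through.
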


The conclusion of this theorem for surfaces is precisely that of Matsuki-Wentworth \cite{MatsukiWentworth}, and our real interest lies in higher dimensions.    We refer the reader to Definition~ \ref{def:separatedwallfirstkind} for the precise definition of being ``separated by a single wall of the first kind'', but we do not expect the hypotheses of this theorem to be optimal or particularly important.  As will become apparent what we really discuss is a framework for producing uniform stability segments in general and we expect the following to be true:

\begin{conjecture*}
Any two bounded stability parameters $\sigma'$ and $\sigma''$ can be joined by a bounded uniform stability segment,
\end{conjecture*}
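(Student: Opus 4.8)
The plan is to proceed in four stages: pass to a single configuration of polarisations and twists, translate uniformity into a linear condition on the direction of the segment, engineer the configuration so that a uniform connecting segment exists, and finally control boundedness along it. \emph{Reduction to a common configuration.} Given bounded parameters $\sigma'$ and $\sigma''$, a priori attached to possibly different collections of polarisations and twists, I would work in a common configuration: take as polarisations the union of all the $L_j$ occurring for $\sigma'$ and for $\sigma''$, together with a supply of auxiliary ample line bundles $M_1,\dots,M_r$ to be chosen later, and take \emph{all} twists equal to a single auxiliary line bundle $B$ (also to be chosen). Both $\sigma'$ and $\sigma''$ become stability parameters in this configuration after padding with zeros; since the semistability inequality only sees the slots with $\sigma_j\neq 0$, boundedness is inherited, and after rescaling we may assume $\sum_j\vol(L_j)\sigma'_j=\sum_j\vol(L_j)\sigma''_j=1$. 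As no wall is crossed under a small perturbation, $\mathcal M_{\sigma'}$ and $\mathcal M_{\sigma''}$ are unchanged if we move $\sigma',\sigma''$ slightly into the interior of their chambers, a freedom that is needed to land the endpoints on a uniform segment.

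\emph{Uniformity as linear algebra.} By Hirzebruch--Riemann--Roch,
\[
\frac{\sum_j\sigma_j\,\chi(E\otimes L_j^{k}\otimes B)}{\rank(E)}
=\sum_{\ell=0}^{d}\Bigl(\sum_j\sigma_j\int_X\frac{c_1(L_j)^{\ell}}{\ell!}\,\Bigl[\frac{\ch(E)}{\rank E}\,\ch(B)\,\Todd(X)\Bigr]_{d-\ell}\Bigr)k^{\ell},
\]
where $[\,\cdot\,]_{m}$ denotes the codimension-$m$ component. The normalisation forces the $k^{d}$-coefficient to equal $1/d!$. For a segment $\sigma(t)=\sigma(0)+t\,v$ with $\sum_j v_j\vol(L_j)=0$, uniformity is exactly the requirement that the coefficient of $k^{\ell}$ be $t$-independent for $\ell=1,\dots,d-1$, i.e.\ that $\sum_j v_j\int_X c_1(L_j)^{\ell}\eta=0$ for every class $\eta$ arising as $\bigl[\tfrac{\ch(E)}{\rank E}\ch(B)\Todd(X)\bigr]_{d-\ell}$. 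Letting $E$ range over enough sheaves, this is a finite linear system cutting out a subspace $W\subseteq\{v:\sum_j v_j\vol(L_j)=0\}$ of admissible directions; the point of a common twist $B$ is that the $\Todd(X)$- and $\ch(B)$-contributions are absorbed into $\sum_j v_j c_1(L_j)^{\ell}$, so that $W$ is essentially cut out by the conditions $\sum_j v_j\,c_1(L_j)^{\ell}=0$ in $B(X)_{\mathbb Q}$ for $\ell=1,\dots,d$. For small $\ell$ these conditions involve only low-dimensional intersection theory, which suggests organising the whole argument as an induction on $d=\dim X$, with the surface and threefold theorems as base cases.

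\emph{Engineering a uniform connecting segment.} The segment joining the (perturbed) endpoints must have direction in $W$, so one must choose $B$ and $M_1,\dots,M_r$ so that $W$ contains a vector $v$ carrying the $\sigma'$-endpoint to the $\sigma''$-endpoint while staying in the positive orthant. Concretely one needs a single coefficient vector realising simultaneous linear relations among $\{c_1(L_1)^{\ell},\dots,c_1(L_{j_0})^{\ell}\}$ for all $\ell=1,\dots,d$; I would produce these by choosing the $M_i$ with $c_1(M_i)$ lying in the subgroup of $B(X)_{\mathbb Q}$ generated by the original first Chern classes, so that all relevant powers live in a controlled subring, and then solving the resulting finite system, enlarging $r$ as necessary and, if a single segment cannot be arranged, concatenating finitely many uniform segments --- which still produces the desired chain of Thaddeus-flips. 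The honest obstruction is that the ample cone of $X$ may be too thin to contain line bundles realising the needed relations; I expect one must first carry this out for $\mathbb R$-ample classes (``virtual polarisations'' in the spirit of \cite{JoyceII}), obtaining a uniform $\mathbb R$-segment, and then approximate by rational data.

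\emph{Boundedness along the segment --- the crux.} It remains to show that the sheaves of type $\tau$ that are semistable for some $\sigma(t)$, $t\in[0,1]$, form a bounded family. Since $[0,1]$ is compact and the boundedness theory of \cite{GRTI} behaves well on compact subsets of the interior of $\Sigma$, the only danger is that the construction above forces $\sigma(t)$ near a boundary face where some $\sigma_j\to 0$; there boundedness for multi-Gieseker stability genuinely fails, which is precisely why the proven cases impose extra hypotheses (dimension $\le 3$, a single wall of the first kind). I expect this to be \textbf{the main difficulty}: one needs either an a priori bound uniform in $t$ --- presumably a Le~Potier--Simpson type estimate adapted to the multi-polarisation setting and the fixed twist $B$, controlling the slopes of potential destabilising subsheaves uniformly along the segment --- or a strengthening of the engineering step ensuring the uniform segment avoids the bad faces. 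Combining a positive resolution of this with the engineering step would complete the proof; in its absence the statement remains a conjecture, with the paper's dimension-$2$ and dimension-$3$ results being the cases where both steps can presently be carried out.
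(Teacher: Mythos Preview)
The statement is a \emph{conjecture} in the paper, not a theorem: the authors do not prove it, and you yourself acknowledge at the end that your proposal is a strategy rather than a proof. There is therefore no ``paper's own proof'' to compare against. What the paper does offer is a framework --- the \emph{zooming} technique of Part~III --- which establishes the special cases (surfaces; threefolds separated by a single wall of the first kind) and which the authors expect should eventually resolve the conjecture.

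Your strategy and the paper's framework are genuinely different. The paper never tries to build a single uniform segment from $\sigma'$ to $\sigma''$. Instead it takes the naive non-uniform segment, locates its finitely many walls, and at each wall constructs a \emph{new} segment with a new configuration of twists $B_j(s)$ --- one per slot, chosen independently --- that is ``closer to uniform'' in the sense that one more coefficient of the reduced multi-Hilbert polynomial becomes $t$-independent. Iterating $d-1$ times yields a finite chain of uniform segments. The essential mechanism is precisely the freedom to tune the $B_j$ slot by slot: Lemma~\ref{lem:finaltwist} shows how to adjust each $\ch_2(D_j)$ separately so as to kill the unwanted $t$-dependence. Your proposal trades this away by fixing a single common twist $B$ and instead introducing auxiliary polarisations $M_i$; the resulting uniformity condition $\sum_j v_j\,c_1(L_j)^{\ell}=0$ in $B(X)_{\mathbb Q}$ for every $1\le\ell\le d$ is extremely rigid --- a linear relation among the $c_1(L_j)$ that survives all powers typically forces the relation to be trivial.

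There is also a concrete gap in your reduction step. If you genuinely replace all twists of $\sigma'$ and $\sigma''$ by a common $B$, you change the stability conditions at the endpoints and hence the moduli spaces you are trying to connect. If instead you keep the original $(L_j,B_j)$-slots and only \emph{add} auxiliary $(M_i,B)$-slots with coefficient zero at the endpoints, then along the interior of the segment the original slots still carry their original distinct twists, and your Riemann--Roch computation with a single $B$ no longer applies. Either reading breaks the ``common twist'' simplification. The paper's iterative scheme sidesteps this by allowing the entire twist configuration to change from one zoom level to the next, which is exactly the flexibility your direct approach gives up.
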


If true one would conclude that any two moduli spaces $\mathcal M_{\sigma'}$ and $\mathcal M_{\sigma''}$ are related by Thaddeus-flips through other moduli spaces of sheaves (in particular this would hold for any two moduli spaces of Gieseker semistable sheaves taken with respect to different polarisations).   It seems likely that the above conjecture can be solved using our framework,  but the details become much more involved.

\subsection*{Zooming}

We briefly discuss the main idea used to construct uniform stability segments (see also Section \ref{sec:strategy} for a more detailed summary).   Suppose that we have two bounded stability parameters $\sigma'$ and $\sigma''$ (taken with respect to the same line bundles $L_{j},B_{j}$) and we wish to show that the moduli spaces $\mathcal M_{\sigma'}$ and $\mathcal M_{\sigma''}$ are related by Thaddeus-flips through moduli spaces of sheaves.  The natural thing to do is to consider the straight line segment $(\sigma(t))_{t\in [0,1]}$ of stability parameters that joins $\sigma'$ and $\sigma''$.  If this segment were uniform, then we would be done by the discussion above, but in general this is not the case.  Instead, there will be a finite number of rational ``walls'' that witness the change of multi-Gieseker-stability with respect to $\sigma(t)$ as $t$ varies in $[0,1]$.  For simplicity, we assume these walls all lie in the interior $(0,1)$.  

Now, the idea is to ``zoom in'' on a single one of these walls $\overline{t}$, and try to find another stability segment $(\eta(s))_{s\in [0,1]}$ for which the following holds:

\medskip
\noindent Let $t_{-}$ and $t_{+}$ be points in $(0,1)$ immediately to the left and right of $\overline{t}$.   Then, for $s>0$ sufficient small we have $\mathcal M_{\sigma(t_{-})} = \mathcal M_{\eta(s)} \text{ and }\mathcal M_{\sigma(t_{+})} = \mathcal M_{\eta(1-s)}$.\medskip

Now, if $\eta$ were uniform, then by the discussion above (Theorem \ref{thm:intermediate}) we get that $\mathcal M_{\sigma(t_{-})}$ and $\mathcal M_{\sigma(t_{+})}$ are related by Thaddeus-flips though moduli spaces of sheaves.  Thus, by applying this idea across each of the finitely many walls $\overline{t}$, we deduce the same holds for $\mathcal M_{\sigma'}$ and $\mathcal M_{\sigma''}$.

It seems unlikely that one can always construct such an $\eta$ that is uniform.  However, for surfaces and threefolds at least, we can construct such a segment that is ``closer'' to being uniform than $\sigma$ was.  Roughly speaking, by this we mean that more coefficients of the relevant reduced multi-Hilbert polynomials associated with $\eta(s)$ are independent of $s$.  
We can then apply the same argument, and zoom-in on the walls for $\eta(s)$, and repeat this until eventually we are left with a  chain of uniform stability segments, which join $\sigma'$ and $\sigma''$.

\subsection*{Comparison with a Naive Approach}
The moduli spaces of Gieseker-semistable sheaves (and more generally twisted multi-Gieseker-semistable sheaves)  are constructed using Geometric Invariant Theory (GIT) and thus one may naively expect that the existence of diagrams of the form \eqref{eq:flipintro}
%\begin{equation*}
%\begin{gathered}
%\begin{xymatrix}{
%\mathcal M_{\sigma(0)} \ar[rd]& & \ar[ld] \mathcal M_{\sigma(1)}\\
%& \mathcal M_{\sigma(\overline{t})},&
%}
%\end{xymatrix}
%\end{gathered}\end{equation*}
follows trivially from the theory of Variation of GIT due to Thaddeus \cite{Thaddeus} and Dolgachev-Hu \cite{DolgachevHu}.   However there are a number of subtleties involved in making this true that are closely connected to the semi-continuity property discussed above.

To discuss this further, consider first the case of the GIT construction of the moduli space of sheaves that are Gieseker-semistable with respect to an ample line bundle $L_0$.  The first task is to find a space with a group action, whose orbits correspond to equivalence classes semistable sheaves.  To this end one argues as follows: the set of sheaves $E$ that are Gieseker-semistable with respect to $L_0$ and of a given topological type is bounded, and thus for $n$ sufficiently large can be considered as a quotient
$$H^0(E\otimes L_0^n) \otimes L_0^{-n} \to E\to 0.$$
Picking an identification between $H^0(E\otimes L_0^n)$ and a fixed vector space of the appropriate dimension, we can thus think of $E$ as a point in a certain Quot scheme $Q_{L_0}$.   This space has a group action, coming from the different choices of this identification, and the tools of GIT are then applied to an appropriate subscheme $R_{L_0}\subset Q_{L_0}$ to get the desired moduli space $\mathcal M_{L_0}$.  This requires a choice of ample polarisation on $R_{L_0}$, and it is a theorem \cite[Thm. 4.3.3]{Bible} that, for $n$ sufficiently large,  there is a choice of polarisation on $R_{L_0}$ for which GIT stability agrees with Gieseker-stability.

From this description it is clear that there is a problem in using Variation of GIT when $L_0$ is replaced with some other ample line bundle $L_1$.  For the moduli space $\mathcal M_{L_1}$ is constructed as the quotient \emph{of a different space} $R_{L_1}$ and not the space $R_{L_0}$ with respect to some polarisation.   (It would be nice if one could also produce $\mathcal M_{L_1}$ as a quotient of $R_{L_0}$ with respect to some suitable polarisation,  but it is not clear if this is in fact possible.)

A slightly different issue occurs when considering sheaves that are (twisted) multi-Gieseker-stable.  Here the ample line bundles $L_1,\dots L_{j_0}$ and twistings $B_1,\ldots,B_{j_0}$ are fixed, and the stability parameter $\sigma\in \mathbb R^{j_0}\setminus \{0\}$ is allowed to vary.   In this context,  the parameter space that replaces $R_{L_0}$ in the previous paragraph is a representation space of a certain quiver that depends on two integers $m\gg n\gg 0$.    The subtlety now concerns the matching of stability in the sense of sheaves with stability in the sense of GIT.  In \cite{GRTI} we prove that for $m\gg n\gg 0$ these two notions agree for a \emph{fixed} $\sigma$.  In this paper we extend this by showing that under the assumption that $\sigma$ varies within a \emph{uniform} segement of stability parameters   these integers can be chosen uniformly (thus justifying the terminology).  As a conclusion we get the variation of stability in the sense of sheaf theory agrees with the variation of stability in the sense of GIT, and thus conclude that the moduli spaces are related by Thaddeus-flips through moduli spaces of sheaves.

\subsection*{Gieseker-stability with respect to real ample classes}

As a by-product of the material we develop to define what it means for a polarisation to be general, cf.~Section~\ref{sec:variationgeneral}, we can give a statement about the existence and projectivity of moduli spaces of sheaves that are Gieseker-semistable with respect certain real classes $\omega\in \Amp(X)_{\R}$.  We refer the reader to Section \ref{sec:chambrstructuregieseker} for definitions of the walls $\tilde{W}_{i,F}$ used in the following statement.

\begin{theorem*}[Projective moduli spaces for $\omega$-semistable sheaves, Theorem \ref{thm:kaehlermoduli:revisited}]
 Let $X$ be a smooth projective manifold of dimension $d$ and $\tau \in B(X)_{\mathbb Q}$.  Suppose that $K\subset \Amp(X)_{\mathbb R}$ is open and relatively compact and that $\omega\in \Cone(K)$ does not lie on any of the walls $\tilde{W}_{i,F}$ for $i\ge 2$ and $F\in \mathcal S_K$. 

Then, there exists a projective moduli space $\mathcal M_{\omega}$ of torsion-free sheaves of topological type $\tau$ that are Gieseker-semistable with respect to $\omega$.  This moduli space contains an open set consisting of points representing isomorphism classes of stable sheaves, while points on the boundary correspond to $S$-equivalence classes of strictly semistable sheaves.  
 \end{theorem*}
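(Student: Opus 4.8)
The plan is to recognise Gieseker-semistability with respect to the real class $\omega$, for torsion-free sheaves of the fixed topological type $\tau$, as multi-Gieseker-semistability with respect to a carefully chosen \emph{rational} and bounded stability parameter $\sigma$, and then to invoke the Existence Theorem for (twisted) semistable sheaves stated above, setting $\mathcal M_\omega:=\mathcal M_\sigma$.

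First I would assemble the finite combinatorial data underlying the statement. Since $K$ is relatively compact in $\Amp(X)_{\R}$, the boundedness results used in \cite{GRTI} show that the torsion-free sheaves of type $\tau$ which are Gieseker-semistable with respect to some class in (a fixed neighbourhood of) $K$, together with the subsheaves and Jordan--H\"older factors they produce, form a bounded family; this is exactly what makes the set $\mathcal S_K$ from Section~\ref{sec:chambrstructuregieseker} finite. For $\omega\in\Cone(K)$, $\omega$-Gieseker-semistability of a type-$\tau$ sheaf is then decided by the finitely many comparison functionals $\delta_{i,F}$ ($F\in\mathcal S_K$, $1\le i\le d$) measuring the difference of the coefficient of $k^{d-i}$ in the reduced Hilbert polynomials attached to $F$ and to $\tau$; by construction $\tilde W_{i,F}=\{\delta_{i,F}=0\}$, the $\delta_{i,F}$ are rational functions of the polarisation, and $\delta_{1,F}$ depends on the polarisation only through the slope, i.e.\ through $\omega^{d-1}$, while $\tilde W_{1,F}$ is rational and of the first kind.

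The main step is to produce ample line bundles $L_1,\dots,L_{j_0}$, auxiliary line bundles $B_1,\dots,B_{j_0}$ and positive rationals $\sigma_1,\dots,\sigma_{j_0}$ with $\sum_j\sigma_j\vol(L_j)=\int_X\omega^d$ such that the reduced twisted multi-Hilbert polynomials attached to the classes of $\{\tau\}\cup\mathcal S_K$ induced by $\sigma=(\sigma_j)$ are ordered exactly as those induced by $\omega$. Because $\omega$ avoids the walls $\tilde W_{i,F}$ with $i\ge2$, all comparisons $\delta_{i,F}$ for $i\ge2$ are \emph{strict} at $\omega$ and therefore persist under a small perturbation; the only equalities that must be reproduced on the nose are the slope equalities $\delta_{1,F}(\omega)=0$. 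Here I would use that $\delta_{1,F}$ depends only on $\omega^{d-1}$ and cuts out a rational hyperplane in the relevant slope space: the face of this finite arrangement containing $\omega^{d-1}$ is a rational affine subspace, and since $\omega$ is ample one may choose $j_0$ and the $c_1(L_j)$ so close to $\omega$ that the classes realisable as $\sum_j\sigma_j c_1(L_j)^{d-1}$ fill a full-dimensional neighbourhood of $\omega^{d-1}$; a short approximation argument then yields a choice landing on that face, which forces $\delta_{1,F}(\sigma)=0$ whenever $\delta_{1,F}(\omega)=0$ and keeps the sign of the remaining $\delta_{1,F}$, while the leftover freedom in the $L_j$, $\sigma_j$ and the twistings $B_j$ is used to fix the (finitely many, strict) signs of the $\delta_{i,F}$ with $i\ge2$. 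Finally I would check that $\sigma$ is a bounded stability parameter, e.g.\ by arranging $\sum_j\sigma_j c_1(L_j)\in K$ and using relative compactness of $K$ once more.

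With such a $\sigma$ in hand the proof closes: every subsheaf that could violate, and every Jordan--H\"older factor that enters an $S$-equivalence of, a type-$\tau$ sheaf semistable for either notion has topological type in $\mathcal S_K$, so $\sigma$-semistability and $\omega$-Gieseker-semistability agree on torsion-free sheaves of type $\tau$ and induce the same $S$-equivalence relation; hence the two moduli functors coincide, and the Existence Theorem above provides the projective coarse moduli space $\mathcal M_\omega=\mathcal M_\sigma$ together with its open subset of stable sheaves and its boundary of $S$-equivalence classes. I expect the genuine difficulty to sit entirely in the middle step — matching the first-kind walls exactly while keeping $\sigma$ rational and bounded — which is precisely where the rationality of the walls $\tilde W_{1,F}$ and the flexibility afforded by using several polarisations together with twistings are indispensable; the higher walls, being avoided by $\omega$, contribute only soft perturbative bookkeeping.
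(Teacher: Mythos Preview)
Your overall strategy is exactly the paper's: replace the real class $\omega$ by a rational multi-Gieseker parameter $\sigma$ for which (semi)stability of type-$\tau$ sheaves agrees, and then invoke the existence theorem. However, your execution is more elaborate than needed, and the paper's proof shows how to avoid most of the bookkeeping.

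The key simplification is this: rather than trying to land on the rational ``face'' cut out by the slope equalities, the paper lands on $\omega^{d-1}$ itself. Since $p\colon \Amp(X)_{\R}\to \Pos(X)_{\R}$ is a homeomorphism, one can choose rational ample $L_1,\ldots,L_{j_0}$ arbitrarily close to $\omega$ so that $\omega^{d-1}=\sum_j\sigma_j\,c_1(L_j)^{d-1}$ is a convex combination of the $p(L_j)$. This matches the slope functional (the $i=1$ coefficient) \emph{exactly for all} $F$, not just those with $\delta_{1,F}(\omega)=0$, so no face argument is needed. Taking the $L_j$ close to $\omega$ forces each $\beta_{i,F}^{L_j}$ to share the sign of $\beta_{i,F}^{\omega}$ for $i\ge 2$, and convexity then gives $p_F^{\sigma}(\le)p_E^{\sigma}\iff p_F^{\omega}(\le)p_E^{\omega}$. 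No twistings $B_j$ are required. Finally, the $\sigma_j$ produced this way are real; the paper perturbs them to rationals \emph{after the fact}, using that the multi-Gieseker wall structure in $\sigma$-space is rational linear.

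Two small points in your write-up would need correcting in any case: boundedness of $\sigma$ is governed by the curve class $\gamma=\sum_j\sigma_j c_1(L_j)^{d-1}$ (which here equals $\omega^{d-1}\in p(\Cone(K))$), not by $\sum_j\sigma_j c_1(L_j)$; and the twistings you introduce are unnecessary and would only complicate the sign-matching for $i\ge 2$.
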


We emphasise that the novelty of the previous Theorem is that $\omega$ is allowed to be real, possibly irrational, and we recall that for threefolds we proved more \Iref{Theorem}{thm:kaehlermodulithreefolds}, namely that $\mathcal M_{\omega}$ is projective for all $\omega\in \Amp(X)_{\mathbb R}$.  It would be interesting to know if this continues to hold for manifolds of any dimension.

\subsection*{Acknowledgements } The authors wish to thank Arend Bayer for helpful conversations at a crucial stage of this project. We also thank Dominic Joyce, Jun Li, and Alexander Schmitt for discussions comparing this work to theirs. In addition, we wish to thank Ivan Smith for discussions, and also acknowledge inspiration drawn from a talk by Aaron Bertram given in January 2014 at Banff  International Research Station \cite{Bertramb}. \medskip

\addtocontents{toc}{\protect\setcounter{tocdepth}{1}}

\part{Uniformity}

We start by extending the discussion of \cite{GRTI} in two directions, first to allow a twisting of the stability condition by a collection of fixed line bundles, and second to make our main results uniform as the stability parameter varies.

\section{Twisted multi-Gieseker-stability}\label{sec:twisted}

Let $X$ be a projective manifold of dimension $d$.  Suppose in addition to a vector $\underline{L}=(L_1,\ldots,L_{j_0})$ of ample line bundles we fix a vector $\underline{B} = (B_1,\ldots, B_{j_0})$ of line bundles on $X$.  Given a collection $\sigma_{1},\ldots,\sigma_{j_{0}}$ of non-negative real numbers, not all zero, we define the \emph{multi-Hilbert polynomial} with respect to this data of a torsion-free sheaf $E$ to be
$$P_E^{\sigma} (m) = \sum\nolimits_j \sigma_j \chi(E\otimes L^m_j\otimes B_j).$$
Writing this polynomial as $P_E^{\sigma}(m) =\sum_{i=0}^d \alpha^{\sigma}_i(E) \frac{m^i}{i!},$ we define the \emph{reduced multi-Hilbert polynomial} as
$$ p_{E}^{\sigma} = \frac{P_{E}^{\sigma}}{\alpha^{\sigma}_{d}(E)}.$$
We shall refer to either the data $(\underline{L},\underline{B},\sigma_{1},\ldots,\sigma_{j_{0}})$ or the vector $(\sigma_{1},\ldots,\sigma_{j_{0}})$ as a \emph{stability parameter}.

\begin{definition}[Multi-Gieseker-stability]\label{defi:multiGiesekerstability}
   We say that a torsion-free sheaf $E$ is \emph{multi-Gieseker-(semi)stable}, or just \emph{(semi)stable},  if for all proper subsheaves $0 \neq F\subset E$ we have 
  \begin{equation}
p_F^{\sigma} (\le) p_E^{\sigma}.
\end{equation}
\end{definition}
Just as in the case of Gieseker-stability, any semistable sheaf admits a J\"ordan-Holder filtration, whose graded object is denoted $gr(E)$.  We say two semistable sheaves $E$ and $E'$ are \emph{$S$-equivalent} if $gr(E)$ and $gr(E')$ are isomorphic.

With these definitions at hand, all the results from \cite{GRTI} extend to this twisted setting.  More precisely, in this more general context the analogues of the Embedding Theorem, \Iref{Theorem}{thm:categoryembedding} (using the same quiver $Q$), and of the main ``Comparison of semistability and JH filtrations'' result, \Iref{Theorem}{thm:mainsemistabilitycomparison}, hold. As a corollary of these results, there exist projective moduli spaces parametrising $S$-equivalence classes of $\sigma$-semistable sheaves of fixed topological type $\tau$, which we will denote by $\mathcal{M}_\sigma = \mathcal{M}_\sigma(\tau)$.

We do not repeat the proofs in this twisted setting, as no new ideas are needed and the existing proofs go through with essentially trivial modifications (and moreover many parts will actually follow from the work in the next section).  The main change is that we now define
$$ T  :=\bigoplus\nolimits_{ij} L_i^{-n} \otimes B_i^{-1} \oplus L_j^{-m} \otimes B_j^{-1},$$
so the quiver representation associated with a sheaf $E$ becomes
$$ \Hom(T,E):= \bigoplus\nolimits_{ij} H^0(E\otimes L_i^n \otimes B_i) \oplus H^0(E\otimes L_j^m \otimes B_j).$$
This is an $L\oplus H$ algebra, where $L$ is the algebra generated by the projection operators (precisely as in \Iref{Section}{subsubsect:Qreps}) and where we adjust the definition of $H$ to be
$$ H := \bigoplus\nolimits_{ij} H_{ij} := \bigoplus\nolimits_{ij} H^0(X,L_i^{-n}\otimes L_j^m \otimes B_i^{-1} \otimes B_j).$$
To obtain the necessary regularity of the sheaves involved, we replace any instance of $(r,\underline{L})$-regularity with $(r,\underline{L},\underline{B})$-regularity, which we define as follows:
\begin{definition}\label{def:twistedregularity} 
  We say a sheaf $E$ is \emph{$(r,\underline{L},\underline{B})$-regular} if $E\otimes B_j$ is $r$-regular with respect to $L_j$ for all $j=1,\ldots,j_0$.
\end{definition}

The above idea extends to the case that each $B_j$ is a formal sum of line bundles over $\mathbb R_{\ge 0}$, i.e., where for every $j=1,\ldots,j_0$, we have
\begin{equation}
 B_j = \sum\nolimits_{i=1}^N b_{ji} B_{ji}, \label{eq:formalsum} \end{equation}
where $B_{ji}$ is a line bundle, $b_{ji}\in \mathbb R_{\ge 0}$, and, to avoid trivialities, such that for all $j$ there is some $i$ with $b_{ji} \neq 0$.  
Given such data it is clear there is a (natural) stability parameter which we denote by $\sigma = (\underline{L};B_1,\ldots,B_{j_0})$
with the property that for any sheaf $E$
$$P^{\sigma}_E(k) = \sum\nolimits_j \sum\nolimits_i b_{ji} \chi(E\otimes L_j^k \otimes B_{ji}).$$
Explicitly, one may take $(\underline{L};B_1,\ldots,B_{j_0})$ to be 
$$ \underbrace{\left(L_1,\ldots,L_1\right.}_{N \text{ times }},\ldots,\underbrace{L_{j_0},\ldots,L_{j_0}}_{N \text{ times }},B_{11},\ldots,B_{1N},\ldots,B_{j_01},\ldots,B_{j_0N},  b_{11},\ldots,b_{1N},\ldots,b_{j_01},\ldots,b_{j_0N}).$$
Thus, in terms of the notation of \cite{GRTI} we have $(\underline{L},\sigma_1,\ldots,\sigma_{j_0}) = (\underline{L};\sigma_1 \mathcal O_X,\ldots,\sigma_{j_0}\mathcal O_X)$.

\begin{remark}
  This definition of ``twisted'' stability (for $j_0=1$) is precisely the same as that introduced by Matsuki-Wentworth \cite{MatsukiWentworth}.  However, the point of view is altered slightly: whereas they wish to consider the stability formally as given by the Euler characteristic for a $\mathbb Q$-twist, defined by the Riemann-Roch Theorem, we instead consider it as given by a weighted sum of Euler characteristics of twists by integral bundles. See also \cite[2.3.4]{Lieblich} for an interpretation of this twisted stability using twisted sheaves. 
\end{remark}

\begin{remark}[Riemann-Roch]\label{rem:RRochIII}
We will frequently use the fact that such twisting does not affect the first non-trivial term of the reduced Hilbert polynomial.  In detail, suppose $X$ is smooth of dimension $\dim X=d$, and given $\sigma = (\underline{L},\underline{B},\sigma_{1},\ldots,\sigma_{j_{0}})$ define
$$ \gamma: = \sum\nolimits_{j} \sigma_{j}c_{1}(L_j)^{d-1} \in N_1(X)_\mathbb{\R}$$
(which we observe to be independent of $\underline{B}$).     Then, using the Riemann-Roch theorem we have for any torsion-free sheaf $E$ on $X$ that
$$ p_{E}^{\sigma} = \frac{k^{d}}{d!} + \hat{\mu}^{\sigma}(E) \frac{k^{d-1}}{(d-1)!} + O(k^{d-2}),$$
where 
$$ \hat{\mu}^{\sigma}(E) = C_1 \frac{\int_X c_1(E) \gamma}{\rank E} + C_2$$
and $C_1, C_2$ are the topological constants
\begin{equation*}
 C_1 = \frac{1}{\sum_j \sigma_j \int_X c_1(L_j)^d} \;\text{ and }\;C_2 = \frac{\sum\nolimits_j \sigma_j\int_X( \Todd_1(X) + c_{1}(B_{j})).c_1(L_j)^{d-1}}{\sum_j \sigma_j \int_X c_1(L_j)^d},
\end{equation*}
which are independent of $E$. In particular, precisely as in \Iref{Lemma}{lem:slopeimpliesmulti} for any torsion-free coherent sheaf $E$ the following implications hold:
\begin{center}
slope stable with respect to $\gamma$ $\Rightarrow$ stable with respect to $\sigma$ $\Rightarrow$ semistable with respect to $\sigma$ $\Rightarrow$ slope semistable with respect to $\gamma$.
\end{center}
 Consequently, any boundedness result for a set of sheaves of a given topological type that are slope semistable with respect to $\gamma$ implies boundedness for the set of sheaves of this topological type that are semistable with respect to $\sigma$.
\end{remark}

\section{Uniformity}\label{sect:uniformity}
 Let $X$ be a smooth variety of dimension $d$, fix $\underline{L} = (L_1,\ldots,L_{j_0})$, where each $L_j$ is ample, and let $B_1,\ldots,B_{j_0}$ be line bundles. Moreover, let $\tau \in B(X)_\mathbb{Q}$,  and set
$$\vol(L_j): =\int_X c_1(L_j)^d.$$
We wish to consider the case of a segment of (twisted) stability parameters; that is for $t\in [0,1]$ let
$$\sigma(t) = (\underline{L};\sigma_1(t)B_1,\ldots,\sigma_{j_0}(t)B_{j_0}).$$
\begin{definition}[Stability segment]\label{def:stabilitysegment}
  We say that $(\sigma(t))_{t\in[0,1]}$ is a \emph{stability segment} if each $\sigma_j\colon [0,1]\to \mathbb R_{\ge 0}$ is a linear function, and if  
$$\sum\nolimits_j \vol(L_j) \sigma_j(t) = 1 \quad \text{ for all } t \in [0,1].$$
\end{definition}
\begin{remark}
Observe that since $\sigma_j(\cdot)$ is linear and non-negative, if $t\in (0,1)$ then $\sigma_j(t)>0$ for all $j$ so $\sigma(t)$ is a ``positive'' stability parameter in the language of \cite{GRTI}.    Also observe that in this case for any torsion-free sheaf $E$ the multiplicities are given by
$$r_E^{\sigma(t)}  = \sum\nolimits_j \sigma_j(t) \rank(E) \vol(L_j) = \rank (E),$$ 
and so the multi-Hilbert polynomial and reduced-Hilbert polynomial are related by
$$ p_E^{\sigma}(k) = \frac{P_E^{\sigma}(k)}{\rank (E)}.$$
This is a polynomial in $k$ whose coefficients are linear functions of $t$.  
\end{remark}
It will be convenient later to allow the variation in $t$ to come from the twisting; so suppose $B_j(t)=\sum_i b_{ji}(t) B_{ji}$ for $j=1,\ldots,j_0$ is a formal sum of line bundles $B_{ji}$ whose coefficients $b_{ji}\colon [0,1]\to \mathbb R_{\ge 0}$ depend linearly on $t$.  Then, the condition that
$$ \sigma(t) = (L_1,\ldots, L_{j_0}; B_1(t), \ldots,B_{j_0}(t))$$
is a stability segment becomes
\begin{equation}
 \sum\nolimits_j \rank(B_j(t)) \vol(L_j) = 1 \quad \text{ for all } t \in [0,1],\label{eq:stablityparametertwisted}
\end{equation}
where $\rank(B_j(t)):= \sum_i b_{ji}(t)$.

\begin{definition}[Uniform stability segment]\label{def:uniform}
  We say that a stability segment $(\sigma(t))_{t\in [0,1]}$ is \emph{uniform} if the reduced multi-Hilbert polynomial of any torsion-free sheaf $E$ is of the form
$$ p_E^{\sigma(t)}(k) =  \frac{k^d}{d!} + a_{d-1}(E) k^{d-1} + \cdots + a_1(E)k + a_0(E,t) \quad \text{ for } t\in [0,1]$$
where  $a_{d-1}(E),\ldots,a_{1}(E)$ are independent of $t$ and $a_0(E,t)$ is linear in $t$.

\end{definition}

The importance of the uniformity hypothesis stems from the following simple statement:
\begin{lemma}\label{lem:uniformcompare}
  Let $(\sigma(t))_{[0,1]}$ be a uniform stability segment, and let $E'\subset E$ be torsion-free.  
\begin{enumerate}
\item There is a $k_0$ such that for all $k\ge k_0$ and all $t\in [0,1]$ we have
$$ p_E^{\sigma(t)}\sim p_{E'}^{\sigma(t)} \quad \text{ if and only if }\quad  p_E^{\sigma(t)}(k)\sim p_{E'}^{\sigma(t)}(k), $$
where $\sim$ is any of $\le$ or $<$ or $\ge$ or $>$. 
\item
For fixed $n,m$ the function
$t \mapsto P_{E'}^{\sigma(t)}(n) P_E^{\sigma(t)}(m) - P_{E}^{\sigma(t)}(n) P_{E'}^{\sigma(t)}(m)$
is linear in $t$.
\end{enumerate}
\end{lemma}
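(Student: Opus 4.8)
The plan is to prove both parts by directly examining the coefficients of the relevant polynomials in $k$ as functions of $t$, using the uniformity hypothesis to control their $t$-dependence.

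For part (1), first I would observe that since $(\sigma(t))_{t\in[0,1]}$ is uniform, both $p_E^{\sigma(t)}$ and $p_{E'}^{\sigma(t)}$ are monic of degree $d$ (leading coefficient $1/d!$ independent of $t$), so their difference $q_t(k) := p_E^{\sigma(t)}(k) - p_{E'}^{\sigma(t)}(k)$ is a polynomial of degree at most $d-1$ in $k$ whose coefficients, by the definition of uniformity, are $a_i(E) - a_i(E')$ for $1 \le i \le d-1$ (independent of $t$) together with the constant term $a_0(E,t) - a_0(E',t)$, which is linear in $t$. Now, $p_E^{\sigma(t)} \sim p_{E'}^{\sigma(t)}$ (in the lexicographic ordering of polynomials, as in the paper) is equivalent to a sign condition on the leading nonzero coefficient of $q_t$. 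The key point is that if the top $d-1$ coefficients of $q_t$ all vanish, then $q_t(k) = a_0(E,t) - a_0(E',t)$ is constant in $k$, so $p_E^{\sigma(t)}(k) \sim p_{E'}^{\sigma(t)}(k)$ for all large $k$ reduces to exactly the same sign condition; and if some coefficient among the top $d-1$ is nonzero, then for $k$ large the sign of $q_t(k)$ is governed by that coefficient, uniformly in $t$ since those coefficients do not depend on $t$. Thus I would choose $k_0$ large enough to dominate the finitely many ratios of coefficients of $q_t$ (e.g.\ bounding the tail of $q_t$ against its leading term over the compact interval $t\in[0,1]$, where $a_0(E,t)-a_0(E',t)$ is bounded), and the stated equivalence follows for all $k \ge k_0$ and all $t$. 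Since there are only finitely many sheaves to worry about here — just the pair $E' \subset E$ — a single $k_0$ suffices.

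For part (2), the strategy is pure bookkeeping on degrees in $t$. By the remark following Definition~\ref{def:stabilitysegment}, for $t \in (0,1)$ the multiplicities satisfy $r_E^{\sigma(t)} = \rank(E)$ and $r_{E'}^{\sigma(t)} = \rank(E')$, so $P_E^{\sigma(t)}(k) = \rank(E)\, p_E^{\sigma(t)}(k)$ and likewise for $E'$; hence each of $P_E^{\sigma(t)}(n)$, $P_E^{\sigma(t)}(m)$, $P_{E'}^{\sigma(t)}(n)$, $P_{E'}^{\sigma(t)}(m)$ is, by uniformity, an affine (degree $\le 1$) function of $t$, with the $t$-linear part coming solely from the respective $a_0(\cdot,t)$ term scaled by the rank. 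Writing $P_E^{\sigma(t)}(n) = A_n + B_n t$ and $P_E^{\sigma(t)}(m) = A_m + B_m t$, and $P_{E'}^{\sigma(t)}(n) = A_n' + B_n' t$, $P_{E'}^{\sigma(t)}(m) = A_m' + B_m' t$, the expression in question is $(A_n' + B_n' t)(A_m + B_m t) - (A_n + B_n t)(A_m' + B_m' t)$. A priori this is quadratic in $t$, but the coefficient of $t^2$ is $B_n' B_m - B_n B_m'$, and here the crucial observation is that $B_n' = \rank(E') \cdot (\text{$t$-slope of } a_0(E',\cdot))$ and $B_m' = \rank(E')\cdot(\text{same slope})$ — that is, $B_n'$ and $B_m'$ are \emph{equal} because the slope of $a_0(E',t)$ in $t$ does not depend on the evaluation point $n$ or $m$; similarly $B_n = B_m$. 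Hence $B_n' B_m - B_n B_m' = B_n' B_n - B_n B_n' = 0$, so the $t^2$ term vanishes and the function is linear (affine) in $t$, as claimed.

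The main obstacle, such as it is, lies in part (1): one must be careful about the exact meaning of the polynomial orderings $\sim$ versus the numerical inequalities $p^{\sigma(t)}_E(k) \sim p^{\sigma(t)}_{E'}(k)$, and in extracting the uniformity in $t$ of the threshold $k_0$. This amounts to checking that the finitely many coefficient-comparisons defining $q_t$ behave well over the compact parameter interval — in particular that the linear-in-$t$ constant term stays bounded — so that a single $k_0$ works simultaneously for all $t \in [0,1]$; this is straightforward but is the one place where the compactness of $[0,1]$ and the uniformity hypothesis are both genuinely used. Part (2) is then a formal consequence of uniformity via the degree-counting above, with the only subtlety being the identification of the $t$-linear coefficients of $P^{\sigma(t)}_{E'}(n)$ and $P^{\sigma(t)}_{E'}(m)$ — which, again, follows immediately from the fact that in Definition~\ref{def:uniform} the $t$-dependence is confined to a single coefficient $a_0(\cdot,t)$ that is linear in $t$ with $k$-independent slope.
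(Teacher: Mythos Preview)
Your proposal is correct and follows essentially the same approach as the paper's proof, which is extremely terse: the paper simply says that part (1) follows ``directly from the uniformity hypothesis'' via \eqref{eq:crux}, and that for part (2) ``the same hypothesis implies the non-linear terms in $t$ cancel.'' Your write-up is a faithful expansion of exactly these two observations, with the explicit computation $B_n' B_m - B_n B_m' = 0$ making precise the cancellation the paper alludes to.
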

\begin{proof}
 Using \eqref{eq:crux}, we conclude the first statement directly from the uniformity hypothesis. For the second one, one observes that the same  hypothesis implies the non-linear terms in $t$ cancel.
\end{proof}

\begin{remark}\label{rmk:closedness}
So, the crux of the above definition is that if $E'\subset E$ is a proper subsheaf then
\begin{equation}
p_E^{\sigma(t)}(k) - p_{E'}^{\sigma(t)}(k) = \tilde{a}_{d-1} k^{d-1} + \cdots + \tilde{a}_1 k + \tilde{a}_0(t),\label{eq:crux}
\end{equation}
where \emph{only} the constant term $\tilde{a}_0(t)$ depends non-trivially on $t$.  This will certainly not hold in general, but below we will give a number of (natural) examples for which it does.

  The property of being uniform gives semicontinuity of the chamber structure on $[0,1]$ defined by the stability parameters $\sigma(t)$.  For suppose that a sheaf $E$ is semistable with respect to $\sigma(t_p)$ for a sequence $t_p\in [0,1]$ that have a limit $\overline{t}$ as $p$ tends to infinity.  Then, without the uniformity assumption there is no reason to expect that $E$ is semistable with respect to $\sigma(\overline{t})$.  To see this, observe that there could, in principle, be a subsheaf $E' \subset E$ such that the difference of reduced multi-Hilbert polynomials is
$$ p_{E'}^{\sigma(t)}(k) - p_E^{\sigma(t)}(k) = (t-\overline{t}) k + 1,$$  
in which case $E'$ would destabilise $E$ for $t=\overline{t}$ but not for any $t<\overline{t}$.  

However, under the assumptions of uniformity and boundedness it will be the case that $E$ is semistable with respect to $\sigma(\overline{t})$.  To see this observe that we clearly need only consider saturated subsheaves $E'\subset E$ with $\hat{\mu}^{\sigma(\overline{t})}(E')\ge \hat{\mu}^{\sigma(\overline{t})}(E)$ and these form a bounded family by Grothendieck's Lemma.  Thus there are only a finite number of topological types of such $E'$, and so we can find a $k_0$ so the conclusion of Lemma \ref{lem:uniformcompare}(1) holds for all such $E'$.  Thus semistability of $E$ with respect to $\sigma(t)$ for $t<\overline{t}$ implies $p_{E'}^{\sigma(t)}(k_0)\le p_E^{\sigma(t)}(k_0)$ for all $t<\overline{t}$, and thus by continuity $p_{E'}^{\sigma(\overline{t})}(k_0)\le p_E^{\sigma(\overline{t})}(k_0)$ as well.  So applying the Lemma again we get $p_{E'}^{\sigma(\overline{t})}\le p_E^{\sigma(\overline{t})}$ and since this holds for all these subsheaves we conclude $E$ is semistable with respect to $\sigma(\overline{t})$ as well.
\end{remark}

We say that a stability segment $(\sigma(t))_{t\in [0,1]}$ is \emph{bounded} 
if the set of sheaves $E$ of topological type $\tau$ that are semistable with respect to $\sigma(t)$ 
for some $t\in [0,1]$ is bounded.  Our goal is to prove the following:

\begin{theorem}(Thaddeus-Flips Through Moduli Space of Sheaves)\label{thm:intermediate}
Let $X$ be smooth, projective and $\tau\in B(X)_{\mathbb Q}$.  Suppose $\sigma(t)_{t\in [0,1]}$ is a bounded and uniform segment of stability parameters and let $t',t''\in (0,1)\cap \mathbb Q$ with $t'<t''$.   Then there exists a finite sequence of rational numbers $t'=t_0<t_1<\ldots<t_N=t''$  so that for $i=0,\ldots,N-1$ there exist Thaddeus-flips
\[\begin{xymatrix}{
 \mathcal{M}_{\sigma(t_i)} \ar[rd] &  & \mathcal{M}_{\sigma(t_{i+1})} \ar[ld] \\
                 &        \mathcal{M}_{\sigma(t'_i)},               & }
\end{xymatrix}
\]
where $t'_i\in (t_i,t_{i+1})$.  Thus, $\mathcal M_{\sigma(t')}$ and $\mathcal M_{\sigma(t'')}$ are related by a finite number of Thaddeus-flips through spaces of moduli spaces of sheaves.
\end{theorem}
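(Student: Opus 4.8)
The plan is to present all the moduli spaces $\mathcal M_{\sigma(t)}$, for $t$ in the fixed rational interval $[t',t'']\subset(0,1)$, as GIT quotients of \emph{one and the same} projective scheme by \emph{one and the same} reductive group, with only the linearisation varying with $t$, and then to invoke the theory of Variation of GIT of Thaddeus \cite{Thaddeus} and Dolgachev--Hu \cite{DolgachevHu}.

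First I would set up the quiver construction uniformly in $t$. For a single positive stability parameter $\sigma$, the Embedding Theorem \Iref{Theorem}{thm:categoryembedding} and the Comparison Theorem \Iref{Theorem}{thm:mainsemistabilitycomparison} (in the twisted form of Section~\ref{sec:twisted}) produce, once $m\gg n\gg 0$, a quiver $Q$, a locally closed $G$-invariant subscheme $\mathcal R=\mathcal R^{n,m}$ of a representation space of $Q$ (with $G$ a product of general linear groups), and a $G$-linearised ample line bundle $\mathcal L_\sigma$ on a projective $G$-scheme $\overline{\mathcal R}$ containing $\mathcal R$ as a dense open subset, such that the GIT semistable (resp.\ stable, resp.\ polystable) points of $\overline{\mathcal R}$ for $\mathcal L_\sigma$ correspond bijectively, and compatibly with $S$-equivalence, to the $\sigma$-semistable (resp.\ stable, resp.\ polystable) torsion-free sheaves of topological type $\tau$; in particular $\mathcal M_\sigma=\overline{\mathcal R}^{ss}(\mathcal L_\sigma)/\!\!/ G$. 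The decisive point is that when $\sigma=\sigma(t)$ ranges over the given bounded uniform segment the data $n$, $m$, $Q$, $\overline{\mathcal R}$, $G$ can be chosen \emph{independently of $t\in[t',t'']$}: boundedness of the segment supplies a single bounded family containing every sheaf that is $\sigma(t)$-semistable for some $t$, hence uniform regularity and vanishing bounds, while uniformity of the segment guarantees that the numerical quantities entering the choice of $n,m$ in the proofs of \cite{GRTI} --- in essence the coefficients $a_{d-1}(E),\dots,a_1(E)$ of the reduced multi-Hilbert polynomials, which by \eqref{eq:crux} are exactly the ones that do not move with $t$ --- are themselves independent of $t$. With $\mathcal R$, $\overline{\mathcal R}$ and $G$ fixed, only the linearisation $\mathcal L_{\sigma(t)}$ varies; and since each $\sigma_j(t)$ is linear in $t$ and $\mathcal L_\sigma$ depends linearly on $\sigma$ through its linearisation weights, $t\mapsto\mathcal L_{\sigma(t)}$ traces a line segment in the space of $G$-linearisations of $\overline{\mathcal R}$.

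Next I would extract the wall-and-chamber structure. Exactly as in Remark~\ref{rmk:closedness}, Grothendieck's Lemma shows that only saturated subsheaves $E'\subset E$ with $\hat\mu^{\sigma(\overline t)}(E')\ge\hat\mu^{\sigma(\overline t)}(E)$ can destabilise, and these lie in a bounded family, so only finitely many topological types of pairs $(E,E')$ occur; by Lemma~\ref{lem:uniformcompare} together with \eqref{eq:crux} the set of $t$ for which such an $E'$ satisfies $p^{\sigma(t)}_{E'}=p^{\sigma(t)}_E$ is cut out by a single linear equation in $t$ with rational coefficients. Hence $[t',t'']$ carries a finite decomposition into chambers separated by rational walls $\overline t_1<\dots<\overline t_m$ lying in $(t',t'')$, on each open chamber of which the set of $\sigma(t)$-semistable sheaves and their $S$-equivalence classes are constant. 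By the uniform construction above and the Comparison Theorem, these walls are exactly the GIT walls crossed by the line $t\mapsto\mathcal L_{\sigma(t)}$ inside $[t',t'']$ in the sense of \cite{DolgachevHu}; enlarging the finite set of walls to include $t'$ and $t''$ if necessary, we may assume that $t',t''$ lie in open chambers and that a single wall is crossed at each $\overline t_j$ (the case where an endpoint is itself a wall only changes the first or last link of the chain below into a single projective morphism). Now I would fix rational $t_i\in(\overline t_i,\overline t_{i+1})$ for $i=1,\dots,m-1$, set $t_0=t'$ and $t_m=t''$, and set $t'_i=\overline t_{i+1}$ for $i=0,\dots,m-1$, so that $t'_i\in(t_i,t_{i+1})$. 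Crossing the wall at $t'_i$, Variation of GIT \cite{Thaddeus,DolgachevHu} applied to $\overline{\mathcal R}$, $G$ and the above line of linearisations produces a diagram
\[
\begin{xymatrix}{
 \overline{\mathcal R}^{ss}(\mathcal L_{\sigma(t_i)})/\!\!/ G \ar[rd] &  & \overline{\mathcal R}^{ss}(\mathcal L_{\sigma(t_{i+1})})/\!\!/ G \ar[ld] \\
                 &        \overline{\mathcal R}^{ss}(\mathcal L_{\sigma(t'_i)})/\!\!/ G,               & }
\end{xymatrix}
\]
in which both morphisms are projective and restrict to isomorphisms over the open locus of $\sigma(t'_i)$-stable sheaves --- that is, a Thaddeus-flip. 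By the first step these three quotients are canonically $\mathcal M_{\sigma(t_i)}$, $\mathcal M_{\sigma(t_{i+1})}$ and $\mathcal M_{\sigma(t'_i)}$, and concatenating the $m$ diagrams over $i=0,\dots,m-1$ yields the required chain $t'=t_0<t_1<\dots<t_m=t''$.

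The main obstacle is the uniform quiver construction: showing that the passage to quiver representations, and in particular the choice of $m\gg n\gg 0$ and of the polarisation $\mathcal L_\sigma$ on $\overline{\mathcal R}$, can be carried out uniformly along the entire segment. Once this is in place, the wall-and-chamber analysis and the conclusion are formal consequences of standard Variation of GIT. This is precisely where boundedness and --- crucially --- uniformity of the reduced multi-Hilbert polynomials are used in an essential way, uniformity being exactly the condition that lets the estimates in the proofs of \cite{GRTI} be taken independent of $t$.
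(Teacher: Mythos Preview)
Your overall strategy is the same as the paper's: realise all the $\mathcal M_{\sigma(t)}$ for $t\in[t',t'']$ as GIT quotients of a single master space by a single group, with only the linearisation moving, and then appeal to Variation of GIT. You have also correctly located the main obstacle in making the quiver construction and the Comparison Theorem uniform in $t$.

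There is, however, a genuine misconception in your argument. You write that ``$\mathcal L_\sigma$ depends linearly on $\sigma$ through its linearisation weights, so $t\mapsto\mathcal L_{\sigma(t)}$ traces a line segment in the space of $G$-linearisations''. This is false: the assignment $\sigma\mapsto\theta$ used in \cite{GRTI} is
\[
\theta_{v_j}=\frac{\sigma_j}{\sum_i\sigma_i d_{v_i}},\qquad
\theta_{w_j}=\frac{-\sigma_j}{\sum_i\sigma_i d_{w_i}},
\]
which is \emph{not} linear in $\sigma$ (see Remark~\ref{remark:chambers}). Consequently the linear GIT walls in the character space need not pull back to the linear sheaf-theoretic walls in $[t',t'']$, and your sentence ``these walls are exactly the GIT walls crossed by the line $t\mapsto\mathcal L_{\sigma(t)}$'' is precisely what must be proved rather than asserted. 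A priori there could be GIT walls at parameters $t$ where no sheaf-theoretic wall lies, and at such $t$ the GIT quotient would have no interpretation as a moduli space of sheaves.

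This is why the paper does not simply say ``the estimates of \cite{GRTI} can be taken independent of $t$''. Condition (C5) of \cite{GRTI}, which converts a polynomial inequality into a numerical one at the chosen $m$, cannot be made to hold for a continuum of $t$; the paper only enforces it at a \emph{finite} set $\{t_i,t_i'\}$ (condition (\hyperlink{C5'}{C5'})). The substance of Theorem~\ref{thm:semistabilityuniform} is then a separate argument (Lemmas~\ref{lem:crucial}--\ref{lemma:linalg}) showing that, under the uniformity hypothesis, module-semistability nevertheless coincides with sheaf-semistability for \emph{every} $t\in[t',t'']$. The key mechanism is Lemma~\ref{lem:uniformcompare}(2): for an $(n,\underline{L},\underline{B})$-regular subsheaf $E'\subset E$ the function $f_{E'}(t)=P_{E'}^t(n)P_E^t(m)-P_E^t(n)P_{E'}^t(m)$ is \emph{linear} in $t$, and this linearity (which is exactly what uniformity buys) forces the module-theoretic and sheaf-theoretic walls to coincide despite the non-linearity of $\sigma\mapsto\theta$. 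Once Theorem~\ref{thm:semistabilityuniform} is in hand, the master-space argument of \cite{GRTI} goes through and VGIT finishes the proof as you describe.
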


\begin{remark}
There is a further generalisation that one can make by letting the $\sigma_j$ themselves be polynomials in $k$ (as considered, for instance, in \cite{Consul}).  As far as the authors can see, this introduces no new difficulties and the existence and variation results we prove carry over.  With this additional flexibility one can of course construct more stability segments, but it does not appear to be  any easier to construct those that are uniform.
\end{remark}

\subsection{Uniform Version of the Le Potier-Simpson Theorem}
One of the main technical results of \cite{GRTI} states if $\sigma$ is a given bounded stability parameter, then for $m\gg n\gg p\gg 0$ a sheaf $E$ (of a given topological type $\tau$) is semistable if and only if the corresponding module $\Hom(T,E)$ is semistable \Iref{Theorem}{thm:mainsemistabilitycomparison}.   We now enhance this result by showing that if $(\sigma(t))_{t \in [0,1]}$ is a bounded and uniform segment of stability conditions then one can choose $m,n,p$ uniformly over all $t\in [0,1]$.

To simplify the discussion,
we will say a sheaf $E$ is \emph{semistable with respect to $t\in [0,1]$} 
if it is semistable with respect to $\sigma(t)$, 
and we let $P^t_E$ and $p^t_E$ denote the corresponding multi-Hilbert polynomial 
and reduced multi-Hilbert polynomial of $E$, respectively. 
By the boundedness hypothesis we may pick $p$ such that any sheaf $E$ of type $\tau$ that is semistable with respect to some $t\in [0,1]$ is $(p,\underline{L},\underline{B})$-regular in the sense of Definition~\ref{def:twistedregularity}.

\begin{theorem}[Uniform version of the Le Potier-Simpson Theorem]\label{thm:lepotiersimpsonuniform}
Let $X$ be a smooth projective variety, and suppose $(\sigma(t))_{t\in [0,1]}$ is a bounded and uniform segment of stability parameters.  Then, if $n\gg p$, for any torsion-free sheaf $E$ of topological type $\tau$ and any $t\in [0,1]$ the following are equivalent:
\begin{enumerate}
\item $E$ is (semi)stable with respect to $t$.
\item $E$ is $(p,\underline{L},\underline{B})$-regular and for all proper $E'\subset E$ we have
  \begin{equation}
 \frac{\sum_j \sigma_j(t) h^0(E'\otimes L_j^n\otimes B_j)}{\rank(E')}(\le) p_E^{t}(n).\label{eq:detectstabilitysubsheafuniform}
 \end{equation}
Moreover, if the saturation of $E'$ in $E$ is not $(n,\underline{L},\underline{B})$-regular, then
  \begin{equation}
 \frac{\sum_j \sigma_j(t) h^0(E'\otimes L_j^n\otimes B_j)}{\rank(E')}\le p_E^{t}(n)-1.\label{eq:detectstabilitysubsheafuniform2}
 \end{equation}
\item $E$ is $(p,\underline{L},\underline{B})$-regular and for all proper saturated $E'\subset E$ with $\hat{\mu}^{\sigma(t)}(E')\ge \hat{\mu}^{\sigma(t)}(E)$ the inequality \eqref{eq:detectstabilitysubsheafuniform} holds.
\end{enumerate}
Moreover, if $E$ is semistable of topological type $\tau$, and $E'\subset E$ is a proper subsheaf, then equality holds in \eqref{eq:detectstabilitysubsheafuniform} if and only if $E'$ is destabilising.  
\end{theorem}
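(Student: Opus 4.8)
The plan is to transcribe the proof of the non-uniform comparison theorem of \cite{GRTI} (which itself follows the Le Potier--Simpson argument of \cite{Bible}), keeping track of where the parameter $t$ enters, and to show that uniformity together with boundedness turns every $t$-dependent choice of constant into a $t$-independent one. Two reductions make this transparent. First, by the boundedness hypothesis there is a single $p$ such that every sheaf of type $\tau$ that is semistable with respect to some $t\in[0,1]$ is $(p,\underline L,\underline B)$-regular, which settles the regularity clauses in (2) and (3). Second, uniformity forces the invariant $\hat{\mu}^{\sigma(t)}(F)$ of every torsion-free sheaf $F$ to be independent of $t$: by Remark~\ref{rem:RRochIII} it is $(d-1)!$ times the coefficient of $k^{d-1}$ in $p_F^{\sigma(t)}$, and by Definition~\ref{def:uniform} that coefficient does not depend on $t$. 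Hence the test condition ``$\hat{\mu}^{\sigma(t)}(E')\ge \hat{\mu}^{\sigma(t)}(E)$'' in (3) is $t$-independent, and by Grothendieck's Lemma the saturated subsheaves $E'$ satisfying it form a \emph{single} bounded family as $E$ ranges over the bounded set of semistable sheaves of type $\tau$ and $t$ over $[0,1]$. Thus there are only finitely many topological types of such $E'$, so we may fix $n_0\ge p$ so large that each of them is $(n_0,\underline L,\underline B)$-regular, and fix $k_0$ as in Lemma~\ref{lem:uniformcompare}(1) working simultaneously for all pairs $(E,E')$ with $E'$ of one of these types.

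Granting this, the equivalences are routine. For (1)$\Rightarrow$(2) let $E$ be semistable and $E'\subset E$ proper; replacing $E'$ by its saturation $\overline{E'}$ only increases the left-hand side of \eqref{eq:detectstabilitysubsheafuniform} while fixing the rank, so it suffices to treat $\overline{E'}$. If $\hat{\mu}^{\sigma(t)}(\overline{E'})\ge \hat{\mu}^{\sigma(t)}(E)$, then $\overline{E'}$ belongs to the bounded family and hence is $(n,\underline L,\underline B)$-regular for $n\ge n_0$, so the left-hand side of \eqref{eq:detectstabilitysubsheafuniform} equals $p^{t}_{\overline{E'}}(n)$, and semistability together with Lemma~\ref{lem:uniformcompare}(1) (for $n\ge k_0$) gives $p^{t}_{\overline{E'}}(n)\le p^{t}_E(n)$. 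If instead $\hat{\mu}^{\sigma(t)}(\overline{E'})<\hat{\mu}^{\sigma(t)}(E)$, the Le Potier--Simpson estimate of \cite{GRTI} bounds $\sum_j\sigma_j(t)\,h^0(\overline{E'}\otimes L_j^n\otimes B_j)$ by a polynomial in $n$ whose leading term is strictly below that of $\rank(E')\,p^t_E(n)$, so for $n$ large \eqref{eq:detectstabilitysubsheafuniform} holds with the slack asserted in \eqref{eq:detectstabilitysubsheafuniform2}; and since an $\overline{E'}$ that fails to be $(n,\underline L,\underline B)$-regular for some $n\ge n_0$ cannot lie in the bounded family, it necessarily falls into this second case, which produces \eqref{eq:detectstabilitysubsheafuniform2}. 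The implication (2)$\Rightarrow$(3) is immediate. For (3)$\Rightarrow$(1), if $E$ failed to be semistable with respect to $t$ we could pick a destabilising subsheaf, which we may take saturated; then $p^t_{E'}> p^t_E$ forces $\hat{\mu}^{\sigma(t)}(E')\ge\hat{\mu}^{\sigma(t)}(E)$, so $E'$ is in the bounded family, is $(n,\underline L,\underline B)$-regular for $n\ge n_0$, and the left-hand side of \eqref{eq:detectstabilitysubsheafuniform} equals $p^t_{E'}(n)> p^t_E(n)$ for $n\ge k_0$ by Lemma~\ref{lem:uniformcompare}(1), contradicting (3); running the argument with every $\le$ replaced by $<$ handles stability.

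For the final assertion, let $E$ be semistable, $E'\subset E$ proper with equality in \eqref{eq:detectstabilitysubsheafuniform}; pass to the saturation, which preserves equality, so $E'$ is now saturated and $\hat{\mu}^{\sigma(t)}(E')\le \hat{\mu}^{\sigma(t)}(E)$ by semistability. If $\hat{\mu}^{\sigma(t)}(E')<\hat{\mu}^{\sigma(t)}(E)$, then $E'$ is not $(n,\underline L,\underline B)$-regular for $n\ge n_0$ --- otherwise the left-hand side would equal $p^t_{E'}(n)<p^t_E(n)$ for $n$ large --- whence \eqref{eq:detectstabilitysubsheafuniform2} bounds the left-hand side by $p^t_E(n)-1$, contradicting equality. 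So $\hat{\mu}^{\sigma(t)}(E')=\hat{\mu}^{\sigma(t)}(E)$, $E'$ is $(n,\underline L,\underline B)$-regular for $n\ge n_0$, and equality reads $p^t_{E'}(n)=p^t_E(n)$ for some $n\ge k_0$; combined with $p^t_{E'}\le p^t_E$ and Lemma~\ref{lem:uniformcompare}(1) this forces $p^t_{E'}=p^t_E$, i.e.\ $E'$ is destabilising. The converse is clear.

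I expect the only real work to be the case $\hat{\mu}^{\sigma(t)}(\overline{E'})<\hat{\mu}^{\sigma(t)}(E)$ in the second paragraph: one must extract from the Le Potier--Simpson machinery of \cite{GRTI} an estimate for $\sum_j\sigma_j(t)\,h^0(\overline{E'}\otimes L_j^n\otimes B_j)$ whose implied constants depend on $t$ only through the vector $\sigma(t)$ and through $t$-independent invariants of $\overline{E'}$, and then use compactness of $[0,1]$ (so that $\sigma(t)$ ranges over a compact set) to make the resulting threshold on $n$ uniform in $t$. Everything else is a transcription of the fixed-parameter argument; the conceptual point is precisely that uniformity has already rendered the geometric input --- the $t$-independence of the reduced Hilbert slopes, hence of the relevant bounded families --- uniform.
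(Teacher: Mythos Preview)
Your strategy---transcribe the single-parameter Le Potier--Simpson argument and make every constant uniform in $t$---is exactly what the paper does, and your observation that uniformity forces $\hat\mu^{\sigma(t)}(F)$ to be independent of $t$ is correct and genuinely simplifies the boundedness step. The paper does not state this explicitly; instead it chooses the auxiliary constants $\overline C, C_1, C_2$ from the original proof so that the two governing inequalities hold for all $t\in[0,1]$, and then defines the bounded test family $\mathcal S$ by the $t$-free condition $\hat\mu^{L_j}(F)\ge -C_2$ for some $j$. Your route and the paper's converge on the same mechanism (a single bounded family of saturated subsheaves, then Lemma~\ref{lem:uniformcompare} to compare reduced polynomials at one value of $n$), but the paper's threshold $-C_2$ is aligned with how the Le Potier--Simpson estimates are already packaged in \cite{GRTI}, so the ``bad case'' you flag as requiring work is absorbed into the existing conditions (i)--(iv) there rather than redone.

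Two small points to tighten. First, in your setup the bounded family of saturated $E'$ is built only from $E$ that are \emph{semistable} for some $t$; but in (3)$\Rightarrow$(1) you take $E$ merely $(p,\underline L,\underline B)$-regular and then choose a destabilising $E'$, so you need $E'$ to lie in the family even though this $E$ is not semistable. The fix is to let $E$ range over all $(p,\underline L,\underline B)$-regular sheaves of type $\tau$ (still bounded), which is precisely what the paper does. Second, your dichotomy for (1)$\Rightarrow$(2) based on $\hat\mu^{\sigma(t)}(\overline{E'})$ versus $\hat\mu^{\sigma(t)}(E)$ is not quite the Le Potier--Simpson split: the standard argument thresholds against a fixed constant and works through the Harder--Narasimhan pieces of $\overline{E'}$, so the claim that ``the leading term is strictly below'' does not follow directly from $\hat\mu(\overline{E'})<\hat\mu(E)$ alone. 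You correctly identify this as the place needing work; following the paper and thresholding at $-C_2$ lets you invoke the estimates from \cite{GRTI} verbatim rather than re-deriving them.
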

\begin{proof}
This is essentially the same as the proof for a single stability parameter.  Choose $\overline{C}$ and $C_1$, as in the proof of \Iref{Theorem}{thm:detectstabilitysections}, and then choose  $C_2$ so that for any $(p, \underline{L},\underline{B})$-regular sheaf $E$ of type $\tau$ and any $t \in [0,1]$ we have
\begin{equation}
C_2\ge -\hat{\mu}^{\sigma(t)}(E)+1
\end{equation}
and such that additionally
\begin{equation}\label{eq:C21uniform}
  \left(1-\frac{\sum_j \sigma_j(t)}{\rank(E)}\right) (C_1 + \overline{C})+\frac{\sum_j\sigma_j(t)}{\rank(E)}(-C_2 + \overline{C})\le \hat{\mu}^{\sigma(t)}(E)-1
\end{equation}
holds for all $t\in [0,1]$. Then, let $\mathcal S$ be the set of all saturated subsheaves $F\subset E$ where $E$ is $(p,\underline{L},\underline{B})$-regular and $\hat{\mu}^{L_j}(F)\ge -C_2$ for some $1\le j\le j_0$.  This set is bounded, and so for $n\gg p$ we can arrange the conditions (i)-(iv) from that proof to hold uniformly over $t\in [0,1]$.  The condition (i) can be achieved by Lemma \ref{lem:uniformcompare} since there are a finite number of different $p_F$ as $F$ ranges over the bounded set $\mathcal S$, and condition (iv) holds uniformly over $t\in [0,1]$, since \eqref{eq:C21uniform} implies the leading order term in the polynomial on the left hand side of (iv) is less than or equal to the leading term in the right hand side minus 1;  thus, the inequality (iv) holds for all $n$ sufficiently large.

From this point on, the proof is the same as that of \Iref{Theorem}{thm:detectstabilitysections}, 
noting that if $E$ is semistable 
and $E'\subset E$ is a sheaf whose saturation is not $(n,\underline{L},\underline{B})$-regular 
then it is necessarily not of type (B) 
and thus the stronger inequality \eqref{eq:detectstabilitysubsheafuniform2} holds.
\end{proof}

\begin{remark}\label{rmk:LePotierAdvanced}

As is clear from the proof of the (3) implies (1) direction, we actually have that with $n$ as chosen above, if $E$ is pure of dimension $d$ of type $\tau$ and $(p,\underline{L},\underline{B})$-regular, and $E'\subset E$ is saturated and $(n,\underline{L},\underline{B})$-regular with
$$ \frac{\sum_j \sigma_j h^0(E'\otimes L_j^n)}{r_{E'}^{\sigma}}\le p_E^{\sigma}(n)$$
then $p^{\sigma}_{E'}\le p_E^{\sigma}$.
\end{remark}

\subsection{Uniform Comparison of Semistability}\label{sec:uniformcomparison}

We continue as above, so $X$ is smooth and projective of dimension $d$ and we fixed a class $\tau\in B(X)_{\mathbb Q}$.

\begin{theorem}[Uniform comparison of semistability and JH filtrations]\label{thm:semistabilityuniform}
Let $(\sigma(t))_{t\in [0,1]}$ be a bounded and uniform segment of stability parameters and let $t',t''\in (0,1)\cap \mathbb Q$ with $t'<t''$.    Then, for $m\gg n\gg p\gg 0$ and all $t\in [t',t'']$  the following holds  for any sheaf $E$ on $X$ of topological type $\tau$:
 \begin{enumerate}
 \item  $E$ is semistable with respect to $\sigma(t)$ if and only if it is torsion free, $(p,\underline{L},\underline{B})$-regular, and $\Hom(T,E)$ is semistable.
 \item If $E$ is semistable, then
$$ \Hom(T,gr E)\simeq gr \Hom(T,E),$$
where $gr$ denotes the graded object coming from a Jordan-H\"older filtration of $E$ or $\Hom(T,E)$, respectively (taken with respect to $\sigma(t)$).  In particular, two semistable sheaves $E$ and $E'$ are $S$-equivalent if and only if $\Hom(T,E)$ and $\Hom(T,E')$ are $S$-equivalent.  
 \end{enumerate}
\end{theorem}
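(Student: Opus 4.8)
The plan is to re-run the proof of \Iref{Theorem}{thm:mainsemistabilitycomparison} for each $t$, keeping track of how the various thresholds depend on $t$, and then to check that they can be chosen independently of $t$ on the compact subinterval $[t',t'']\subset(0,1)$. Note first that on $[t',t'']$ there is an $\epsilon>0$ with $\sigma_j(t)\ge\epsilon$ for every $j$ and every $t\in[t',t'']$, so all the $\sigma(t)$ are uniformly ``positive'' stability parameters in the sense of \cite{GRTI} and the quiver construction applies; this positivity is the reason we restrict to $[t',t'']$ rather than to $[0,1]$. The two potential sources of $t$-dependence are the regularity threshold $p$ and the threshold $n$ governing the Le Potier--Simpson estimates, and these are handled, respectively, by boundedness of the segment (which yields a $p$ valid for every $t\in[0,1]$) and by the uniform Le Potier--Simpson Theorem \ref{thm:lepotiersimpsonuniform} (which then yields an $n\gg p$ valid for every $t\in[0,1]$). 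It remains to choose $m\gg n$ uniformly, and this is where uniformity of the segment enters directly via Lemma \ref{lem:uniformcompare}.

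So, first fix $p$ so that every sheaf of type $\tau$ that is semistable with respect to some $t\in[0,1]$ is $(p,\underline{L},\underline{B})$-regular, and then fix $n\gg p$ as provided by Theorem \ref{thm:lepotiersimpsonuniform}. For part (1), one now follows \cite{GRTI}: via the twisted Embedding Theorem one passes from $E$ to $\Hom(T,E)$ and must match GIT-semistability of $\Hom(T,E)$ --- tested on its submodules --- with the inequalities \eqref{eq:detectstabilitysubsheafuniform}--\eqref{eq:detectstabilitysubsheafuniform2} on subsheaves. The non-obvious direction requires $m$ large compared to $n$; the submodules $M'\subset\Hom(T,E)$ that can affect (semi)stability saturate to subsheaves $E'\subset E$ which, by Grothendieck's Lemma applied to saturated subsheaves with $\hat{\mu}^{\sigma(t)}(E')\ge\hat{\mu}^{\sigma(t)}(E)$ (cf.\ Remark \ref{rmk:closedness}), lie in a bounded family and hence fall into finitely many topological types. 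For each such type the quantities comparing $E'$ and $E$ (e.g.\ $P^{\sigma(t)}_{E'}(m)P^{\sigma(t)}_E(n)-P^{\sigma(t)}_E(m)P^{\sigma(t)}_{E'}(n)$) are linear in $t$ by Lemma \ref{lem:uniformcompare}(2), and their leading behaviour in $m$ is governed by the $t$-independent coefficients appearing in \eqref{eq:crux}; consequently a single $m$ works for all $t\in[t',t'']$, which proves (1).

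For part (2) one argues exactly as in \cite{GRTI}, pointwise in $t$: with the $m,n,p$ just fixed, the assignment $E\mapsto\Hom(T,E)$ sends a subsheaf destabilising $E$ with respect to $\sigma(t)$ to a submodule destabilising $\Hom(T,E)$ and conversely --- the subtle ``non-regular saturation'' case being dealt with by the advanced form of Le Potier--Simpson (Remark \ref{rmk:LePotierAdvanced}) --- so it is compatible with passing to Jordan--H\"older filtrations with respect to $\sigma(t)$. Taking associated graded objects yields $\Hom(T,gr\,E)\simeq gr\,\Hom(T,E)$, and the assertion about $S$-equivalence is immediate. The combinatorial type of the Jordan--H\"older filtration of $E$ may genuinely change with $t$ inside $[t',t'']$, but this is irrelevant: for each fixed $t$ the argument uses only the single uniform choice of $m,n,p$.

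The main obstacle I anticipate is the uniform choice of $m$ in part (1): one must verify that the finitely-many-bounded-families argument of \cite{GRTI}, together with the accompanying identification of a polarisation on the relevant subscheme of the quiver representation space for which GIT-semistability coincides with $\sigma(t)$-semistability, really can be made independent of $t$. This is precisely what the hypotheses of boundedness and uniformity are designed to deliver, through the linear-in-$t$ structure recorded in Lemma \ref{lem:uniformcompare} and \eqref{eq:crux} together with the uniform positivity $\sigma_j(t)\ge\epsilon$; the upshot is that all the spaces $\mathcal M_{\sigma(t)}$, $t\in[t',t'']$, arise as GIT quotients of one fixed space with a linearisation varying in a controlled way with $t$ --- the form required to invoke Variation of GIT in the proof of Theorem \ref{thm:intermediate}.
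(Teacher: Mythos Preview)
Your strategy of making the thresholds $p,n,m$ uniform over $[t',t'']$ is correct for $p$ and $n$, but breaks down at the choice of $m$.  The obstacle is not the cross-multiplication quantity $f_{E'}(t)=P^{\sigma(t)}_{E'}(n)P^{\sigma(t)}_E(m)-P^{\sigma(t)}_E(m)P^{\sigma(t)}_{E'}(n)$ you cite, which is indeed linear in $t$, but the condition (C5) from \cite{GRTI} that underlies the passage from module-semistability to sheaf-semistability.  That condition requires, for all integers $c_j\in\{0,\dots,P_j(n)\}$ and all $E'\in\mathcal S_1\cup\mathcal S_2$, that the \emph{polynomial} relation $P_E^t\cdot\bigl(\sum_j\sigma_j(t)c_j\bigr)\sim P_{E'}^t\cdot P_E^t(n)$ be equivalent to its \emph{value at $m$}.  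Even under the uniformity hypothesis the leading $k^d$-coefficient of the difference, namely $\bigl(\sum_j\sigma_j(t)c_j\bigr)\rank E-P_E^t(n)\rank E'$, is affine in $t$ and can vanish at an interior point $t^*$; near such a $t^*$ the $m$ needed to make the numerical and polynomial comparisons agree blows up.  Hence (C5) cannot be imposed uniformly over $[t',t'']$, and your assertion that ``a single $m$ works for all $t$'' does not follow.  Relatedly, the module slope $\mu_t(M)$ is a \emph{ratio} of linear functions of $t$ (cf.\ Remark~\ref{remark:chambers}), so the GIT walls on the module side are not a priori linear in $t$ and need not match the sheaf walls.

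The paper proceeds by a genuinely different route.  It imposes (C5') only at a \emph{finite} set of points --- the multi-Gieseker walls $t_i$ together with one chosen interior point $t_i'$ per chamber --- and first proves the comparison theorem there (Theorem~\ref{thm:absolutecase}).  The extension to arbitrary $t\in[t',t'']$ is a separate argument by contradiction: if $E$ is sheaf-semistable on all of $[t_i,t_{i+1}]$ but $\Hom(T,E)$ fails module-semistability at some interior $t$, one locates via Lemma~\ref{lemma:linalg} a point where $\Hom(T,E)$ is properly semistable with a destabilising submodule $M'$ whose slope-difference is not identically zero; a new numerical condition (C6') together with Lemma~\ref{lem:crucial} then forces $M'=\Hom(T,E')$ for an $(n,\underline L,\underline B)$-regular subsheaf $E'\subset E$; and now the linearity of $f_{E'}(\cdot)$ yields a contradiction with module-semistability at the endpoints $t_i,t_{i+1}$.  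The converse direction and the $S$-equivalence statement are handled by the same mechanism (using Lemma~\ref{lem:comparisonimproved} at the wall points and again the linearity of $f_{E'}$).  In short, the uniformity hypothesis is not used to make (C5) uniform, but to make $f_{E'}$ linear so that one can bootstrap from a finite set of $t$'s to the whole interval.
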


For the proof we first make explicit our choice of integers.  First, we use boundedness to choose $p$ large enough so that\medskip

\noindent (\hypertarget{C1'}{C1'}) Any sheaf $E$ of topological type $\tau$ that is semistable with respect to some $t\in [0,1]$ is $(p,\underline{L},\underline{B})$-regular.\medskip

Now, let $\mathcal S_2$ be the set of saturated subsheaves $E'\subset E$ where $E$ is $(p,\underline{L},\underline{B})$-regular of topological type $\tau$ and $\hat{\mu}^{\sigma(t)}(E')\ge \hat{\mu}^{\sigma(t)}(E)$ for some $t\in [0,1]$.  Then, $\mathcal S_{2}$ is bounded by Lemma \ref{lem:grothendicklemmavariant} below.  So, we may let $n$ be large enough so\medskip

\noindent (\hypertarget{C2'}{C2'}) (a) The conclusion of the Uniform Version of the Le Potier-Simpson Theorem (Theorem~\ref{thm:lepotiersimpsonuniform}) holds and (b) any sheaf $E'\subset \mathcal S_2$ is $(n,\underline{L},\underline{B})$-regular. \medskip

Now, as in \Iref{Section}{sec:chamber}, the interval $[0,1]$ admits a finite chamber decomposition that witnesses the change of stability as $t$ varies.   In detail, this says there exists a finite sequence $t'=t_0<t_1<\ldots<t_N=t''$ of rational numbers such that any  if $E$ is any $(p,\underline{L},\underline{B})$-regular sheaf and $E'\subset E$ is in $\mathcal S_2$, then $p_{E'}^t\le p_E^t$ for some $t\in (t_i,t_{i+1})$ implies that this holds for all $t\in (t_i,t_{i+1})$.    In particular, if $E$ is semistable with respect to $t\in (t_i,t_{i+1})$, then it is semistable with respect to all $t\in (t_i,t_{i+1})$, and thus by uniformity, Remark \ref{rmk:closedness}, we know that if this holds, then in fact $E$ is semistable with respect to $t_i$ and $t_{i+1}$ as well.

For each $E'\subset E$ where $E$ is $(p,\underline{L},\underline{B})$-regular of topological type $\tau$ and $E'\in \mathcal S_2$, consider the function
$$f_{E'}(t) = P_{E'}^t(n) P_E^t(m) - P_{E}^t(n) P_{E'}^t(m).$$
By boundedness we may enlarge the set $\{t_0,\ldots,t_N\}$ and assume that for all such $E'\subset E$ the following holds:
\begin{equation}
 \text{if }f_{E'}\not\equiv 0, \text{ then }f_{E'}(t) = 0  \text{ for some } t\in[t',t''] \text{ implies } t=t_i \text{ for some }i. \label{eq:extrachambers}
\end{equation}

We now choose $m\gg n$ large enough so that\medskip

\noindent (\hypertarget{C3'}{C3'}) Each $L_j^{-n}$ is $(m,\underline{L},\underline{B})$-regular.\medskip

For the next condition we make a construction completely analogous to that of \Iref{Section}{sect:ComparisonOfSemistability}.  Let $E$ be any sheaf that is $(n,\underline{L},\underline{B})$-regular and has topological type $\tau$. For each $j$ let
$$ \epsilon_j \colon H^0(E\otimes L_j^n\otimes B_{j})\otimes L_j^{-n}\otimes B_{j}^{-1} \to E$$
be the natural (surjective) evaluation maps.  
\begin{definition}\label{def:Esum}
  For an $(n,\underline{L},\underline{B})$-regular sheaf $E$ and subspaces $V_j'\subset H^0(E\otimes L_j^n\otimes B_{j})$ let $E'_j$ and $F'_j$ be the image and kernel of $\epsilon_j$ restricted to $V'_j$, so there is a short exact sequence $ 0\to F_j' \to V'_j\otimes L_j^{-n} \to E_j'\to 0$. Then, define a subsheaf of $E$ by $$ E_{\text{sum}} := E_{\text{sum}} (V_1',\ldots,V'_{j_0}) := E'_{1} + \cdots + E'_{j_0}$$
and let $K=K(V_1,\ldots,V_{j_0})$ be the kernel of the surjection $\bigoplus_j E'_j \to E_{\text{sum}}.$  We let $\mathcal S_1$ be the bounded set of all sheaves $E_j',F'_j,E_{sum}$ and $K$ that arise in this way.
\end{definition}

By increasing $m$ if necessary, we may assume the following.\medskip

\noindent (\hypertarget{C4'}{C4'}) All the sheaves in set $\mathcal S_1$ are  $(m,\underline{L},\underline{B})$-regular.  \medskip

Note the assumption (\hyperlink{C2'}{C2'})(a) implies the usual (non-uniform) Le Potier-Simpson Theorem holds \Iref{Theorem}{thm:detectstabilitysections}.   Since by (\hyperlink{C2'}{C2'})(b) any sheaf in $\mathcal S_2$ is certainly $(m,\underline{L},\underline{B})$-regular, we see that the conditions so far imply the (twisted versions of) conditions (\hyperlink{C1}{C1})-(\hyperlink{C4}{C4}) from \Iref{Section}{sect:ComparisonOfSemistability} hold for each $t\in [t',t'']$. 

The analogue of condition (\hyperlink{C5}{C5}) is more subtle, for this we require  the following:\medskip

	\noindent (\hypertarget{C5'}{C5'}) For each $i$, let $t_i'$ be a point in the interval $(t_i,t_{i+1})$.  Then, for any $t\in \{t_i,t_i'\}$ the twisted version of condition (C5) from \Iref{Section}{sect:ComparisonOfSemistability} holds.  That is,  if $P_j(k) = \chi(E\otimes L_j^k\otimes B_j)$, then for any integers $c_j\in \{ 0,\ldots,P_j(n)\}$ and sheaves $E'\in \mathcal S_1\cup \mathcal S_2$ the polynomial relation $ P_E^{t}(\sum_j \sigma_j^t)c_j \sim P_{E'}^{t} P_E^{t}(n)$ is equivalent to the relation $ P_E^{t} (m) \sum_j \sigma_j(t) c_j \sim P_{E'}^{t}(m) P_{E}^{t}(n)$, where $\sim$ is any of $\le$ or $<$ or $=$.\medskip
\medskip

We note that (\hyperlink{C5'}{C5'}) is possible for the same reason that (\hyperlink{C5}{C5}) was possible, since we are only demanding that it hold for a \emph{finite} number of stability parameters.   Thus, we have that (the twisted versions of) conditions (C1) though (C5) holds for all the $\sigma(t_i)$ and $\sigma(t_i')$.  In particular, the proof given in \cite[Section 8]{GRTI} applies to establish the Comparison of Semistability and of JH filtrations, \cite[Theorem 8.1]{GRTI}, at these finite collection of points, giving the following:

\begin{theorem}
\label{thm:absolutecase}
The conclusion of Theorem \ref{thm:semistabilityuniform} holds if $t$ is contained in the finite set $\{t_{i},t_{i}'\}$.
\end{theorem}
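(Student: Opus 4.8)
The plan is to deduce Theorem~\ref{thm:absolutecase} directly from the single-parameter Comparison of Semistability and of JH filtrations, i.e.\ from (the twisted version of) \cite[Theorem 8.1]{GRTI}, whose proof in \cite[Section 8]{GRTI} uses nothing about the chosen stability parameter beyond the five conditions (\hyperlink{C1}{C1})--(\hyperlink{C5}{C5}) recorded there, together with the boundedness of the relevant auxiliary sets of subsheaves. Indeed the entire purpose of the choices of $p$, $n$, $m$ made above, and of conditions (\hyperlink{C1'}{C1'})--(\hyperlink{C5'}{C5'}), is precisely to make those five conditions hold \emph{simultaneously} at each of the finitely many parameters $\sigma(t_i)$ and $\sigma(t_i')$; once this is checked, assertions (1) and (2) of Theorem~\ref{thm:semistabilityuniform} at $t\in\{t_i,t_i'\}$ follow by quoting that proof verbatim, with $T$, the algebra $L\oplus H$, and the quiver $Q$ replaced by their twisted counterparts as in Section~\ref{sec:twisted}.

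Concretely, I would verify the conditions one at a time. Condition (\hyperlink{C1}{C1}) is (\hyperlink{C1'}{C1'}); conditions (\hyperlink{C2}{C2})--(\hyperlink{C4}{C4}) follow from (\hyperlink{C2'}{C2'})(a),(b), (\hyperlink{C3'}{C3'}), (\hyperlink{C4'}{C4'}), and the boundedness of $\mathcal S_1$ and $\mathcal S_2$ together with Definition~\ref{def:Esum} --- exactly as in the paragraph preceding the statement, noting that (\hyperlink{C2'}{C2'})(b) makes every member of $\mathcal S_2$ in particular $(m,\underline L,\underline B)$-regular; and condition (\hyperlink{C5}{C5}) at a point of $\{t_i,t_i'\}$ is exactly what (\hyperlink{C5'}{C5'}) asserts. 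With (\hyperlink{C1}{C1})--(\hyperlink{C5}{C5}) available at a fixed $\sigma(t)$ with $t\in\{t_i,t_i'\}$, the argument of \cite[Section 8]{GRTI} yields the equivalence that $E$ is semistable with respect to $\sigma(t)$ if and only if $E$ is torsion-free, $(p,\underline L,\underline B)$-regular and $\Hom(T,E)$ is semistable, together with the identification of $\Hom(T,gr\,E)$ with $gr\,\Hom(T,E)$ and hence the statement on $S$-equivalence. Since $\{t_i,t_i'\}$ is finite, this completes the argument.

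I do not anticipate any genuine obstacle in this theorem itself: the work has all been done in arranging (\hyperlink{C1'}{C1'})--(\hyperlink{C5'}{C5'}). The one point worth flagging is that condition (\hyperlink{C5}{C5}) --- the matching of the polynomial inequalities with their ``degree-$m$'' evaluations --- is exactly the condition that cannot be imposed uniformly in $t$, only at finitely many values; this is why Theorem~\ref{thm:absolutecase} is stated only for $t\in\{t_i,t_i'\}$, and why the genuinely new step, namely interpolating the conclusion of Theorem~\ref{thm:semistabilityuniform} to all $t\in[t',t'']$, must be carried out separately and is not part of the present statement.
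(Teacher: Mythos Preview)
Your proposal is correct and is precisely the paper's own argument: the paper states immediately before Theorem~\ref{thm:absolutecase} that conditions (\hyperlink{C1'}{C1'})--(\hyperlink{C5'}{C5'}) ensure the twisted versions of (C1)--(C5) from \cite[Section~8]{GRTI} hold at each $\sigma(t_i)$ and $\sigma(t_i')$, so that \cite[Theorem~8.1]{GRTI} applies verbatim at these finitely many points. Your write-up is in fact more explicit than the paper's one-sentence justification, and your closing remark about why (\hyperlink{C5}{C5}) singles out finitely many values is exactly the right observation.
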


We next claim that by enlarging  $m$ if necessary, we may also assume the following holds for all $t\in [0,1]$:\medskip

\noindent (\hypertarget{C6'}{C6'}) If $t\in [0,1]$
and $E'\subset E$ with $E$ $(p,\underline{L},\underline{B})$-regular of topological type $\tau$ and $E'\in \mathcal S_1$ is such that
\begin{equation}
\sum\nolimits_j \sigma_j(t) h^0(E'\otimes L_j^n\otimes B_j) \rank(E) \le P^{t}_E(n)\rank(E') - \frac{1}{\max_j \vol(L_j)}, \label{eq:c61}
\end{equation}
then 
\begin{equation}
 \sum\nolimits_j \sigma_j(t) h^0(E'\otimes L_j^n\otimes B_j) P_E^{t}(m) \le P^{t}_E(n) P_{E'}^{t}(m) -1. \label{eq:c62}
\end{equation}

To see this, observe that as $\mathcal S_1$ is bounded, the set of multi-Hilbert polynomials $P_{E'}^{\sigma(t)}$ that arise for $E' \in \mathcal{S}_{1}$ is finite.  Now, \eqref{eq:c61} says that the corresponding inequality in the leading order coefficient in $m$ (namely the coefficient of $m^{d}$) in \eqref{eq:c62} holds strictly (and by an amount bounded away from $0$ for all $t\in [0,1]$).  So, as the coefficients of all the lower order terms are bounded over $t\in [0,1]$ for all such $E'$, we conclude that \eqref{eq:c62} holds for all $m\gg 0$.\medskip

Having made our choice of integers $m,n,p$ we turn to some preliminary lemmas needed for our proof of the Uniform Comparison of Semistability.   For a non-trivial $A$-module $M = \bigoplus_j V_j\oplus W_j$ we set
$$ \mu_t(M) = \frac{\sum_j \sigma_j(t) \dim V_j}{\sum_j \sigma_j(t) \dim W_j}\quad \text{ for  }t\in (0,1) .$$ Moreover, we recall the following technical definition from \cite{GRTI}.
\begin{definition}
  Let $M'=\bigoplus_j V_j'\oplus W_j'$ and  $M''=\bigoplus_j V_j''\oplus W_j''$ be two submodules of a given $A$-module $M$.  We say that $M'$ is \emph{subordinate} to $M''$ if
  \begin{equation}\label{eq:subordinate}
 V_j'\subset V_j'' \text{ and } W''_j\subset W'_j \quad \text{ for all } j.
\end{equation}
\end{definition}

\begin{lemma}\label{lem:crucial}
Let $E$ be a torsion-free sheaf of type $\tau$.  Suppose $t\in (0,1)$ is such that $E$ is semistable with respect $\sigma(t)$.   Then, if $M'$ is a submodule of $M=\Hom(T,E)$ with $\mu_t(M')= \mu_t(M)$, there exists an $(n,\underline{L},\underline{B})$-regular sheaf $E'\subset E$ such that $M'$ is subordinate to $\Hom(T,E')$.
\end{lemma}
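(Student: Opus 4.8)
Here is how I would approach the proof.

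The plan is to adapt the proof of the corresponding statement in \cite{GRTI} almost verbatim, with the non-uniform choice of integers there replaced by the uniform choice $m\gg n\gg p\gg 0$ fixed above and the Le Potier--Simpson estimates replaced by their uniform version, Theorem~\ref{thm:lepotiersimpsonuniform}. Write $M=\Hom(T,E)=\bigoplus_j V_j\oplus W_j$ with $V_j=H^0(E\otimes L_j^n\otimes B_j)$, $W_j=H^0(E\otimes L_j^m\otimes B_j)$ and $M'=\bigoplus_j V'_j\oplus W'_j$. The subsheaf will be $E':=E_{\text{sum}}(V'_1,\dots,V'_{j_0})$ of Definition~\ref{def:Esum}, together with the auxiliary sheaves $E'_j$, $F'_j$ and $K$ there; all of these lie in $\mathcal S_1$, hence are $(m,\underline L,\underline B)$-regular by~(\hyperlink{C4'}{C4'}), and each $L_i^{-n}$ is $(m,\underline L,\underline B)$-regular by~(\hyperlink{C3'}{C3'}). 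If $E'=E$ there is nothing to prove (then $E$ is $(p,\underline L,\underline B)$- hence $(n,\underline L,\underline B)$-regular), and since $\mu_t(M')=\mu_t(M)>0$ forces the $V$-part of $M'$ to be non-zero, $E'$ is a non-zero torsion-free subsheaf; so we may assume $0\neq E'\subsetneq E$.

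First I would check that $M'$ is subordinate to $\Hom(T,E')=\bigoplus_j H^0(E'\otimes L_j^n\otimes B_j)\oplus H^0(E'\otimes L_j^m\otimes B_j)$. For the $V$-side, a section $s\in V'_j$ factors, by the definition of $E'_j$ as the image of $\epsilon_j$ restricted to $V'_j$, through $H^0(E'_j\otimes L_j^n\otimes B_j)$, and $E'_j\subset E'$ gives $V'_j\subset H^0(E'\otimes L_j^n\otimes B_j)$. For the $W$-side, tensoring $0\to F'_j\to V'_j\otimes L_j^{-n}\otimes B_j^{-1}\to E'_j\to 0$ by $L_j^m\otimes B_j$ and using $H^1(F'_i\otimes L_j^m\otimes B_j)=0$ identifies $H^0(E'_i\otimes L_j^m\otimes B_j)$ with the image of $V'_i\otimes H_{ij}$; viewed inside $W_j$ this is exactly the image of the $H$-action $H_{ij}\otimes V'_i\to W_j$, which lands in $W'_j$ because $M'$ is a submodule. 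Summing over $i$ and using $H^1(K\otimes L_j^m\otimes B_j)=0$ to see that $\bigoplus_i H^0(E'_i\otimes L_j^m\otimes B_j)\to H^0(E'\otimes L_j^m\otimes B_j)$ is surjective, we get $H^0(E'\otimes L_j^m\otimes B_j)\subset W'_j$. In particular this yields $\dim V'_j\le h^0(E'\otimes L_j^n\otimes B_j)$ and $\dim W'_j\ge h^0(E'\otimes L_j^m\otimes B_j)=\chi(E'\otimes L_j^m\otimes B_j)$, the equality by $(m,\underline L,\underline B)$-regularity of $E'$.

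The main point, and the step I expect to be the real obstacle, is to show $E'$ may be taken $(n,\underline L,\underline B)$-regular; this is where semistability of $E$ enters, through Theorem~\ref{thm:lepotiersimpsonuniform}, which gives $\sum_j\sigma_j(t)\,h^0(E'\otimes L_j^n\otimes B_j)\le\rank(E')\,p_E^{t}(n)$, improved to $\le\rank(E')(p_E^{t}(n)-1)$ once the saturation of $E'$ is not $(n,\underline L,\underline B)$-regular. Feeding in the two estimates above together with $\mu_t(M')=\mu_t(M)$ — which, using $(n,\underline L,\underline B)$- and $(m,\underline L,\underline B)$-regularity of $E$, reads $\sum_j\sigma_j(t)\dim V'_j\cdot P_E^{t}(m)=\sum_j\sigma_j(t)\dim W'_j\cdot P_E^{t}(n)$ — gives
\[
\frac{\sum_j\sigma_j(t)\,h^0(E'\otimes L_j^n\otimes B_j)}{\rank(E')}\;\ge\;\frac{P_E^{t}(n)}{P_E^{t}(m)}\cdot\frac{P_{E'}^{t}(m)}{\rank(E')}.
\]
If the saturation of $E'$ were not $(n,\underline L,\underline B)$-regular, combining this with the improved Le Potier--Simpson bound and simplifying (with $p_E^{t}(n)=P_E^{t}(n)/\rank(E)$) produces $p_E^{t}(m)-p_{E'}^{t}(m)\ge P_E^{t}(m)/P_E^{t}(n)$. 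By uniformity the left-hand side is, for $m$ large, a polynomial in $m$ of degree at most $d-1$ whose coefficients are bounded uniformly in $t\in[0,1]$ and over the bounded family $\mathcal S_1$ (so only finitely many $p_{E'}^{t}$ occur), whereas for our fixed $n$ the right-hand side grows like a positive multiple of $m^{d}$; this is impossible once $m$ is large enough, which it is by our choice. Hence the saturation of $E'=E_{\text{sum}}$ is $(n,\underline L,\underline B)$-regular.

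It then remains to upgrade this to $(n,\underline L,\underline B)$-regularity of $E'$ itself — one cannot simply pass to the saturation, since when $E'$ is $(m,\underline L,\underline B)$-regular but not saturated this enlarges $H^0(E'\otimes L_j^m\otimes B_j)$ and destroys the $W$-side of subordination. This is the delicate point and is handled as in \cite{GRTI}: the point is that the equality $\mu_t(M')=\mu_t(M)$, together with the semistability of $E$, pins down enough of the estimates above to force $E_{\text{sum}}$ to agree with its saturation (a subsheaf of a semistable sheaf with the same reduced multi-Hilbert polynomial being automatically saturated), so that $E'$ is $(n,\underline L,\underline B)$-regular. It is precisely in order to keep all of these estimates simultaneously valid for every $t\in(t',t'')$ that the uniformity hypothesis, and the uniform choice of $m,n,p$, are needed.
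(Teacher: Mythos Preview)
Your argument is correct up through the regularity of the saturation of $E'$, and the subordination step you spell out is exactly the content of the cited proposition from \cite{GRTI}. Your route to ``saturation of $E'$ is $(n,\underline L,\underline B)$-regular'' differs slightly from the paper: rather than invoking the specifically engineered condition~(\hyperlink{C6'}{C6'}), you compare growth orders in $m$ and obtain the contradiction $p_E^{t}(m)-p_{E'}^{t}(m)\ge P_E^{t}(m)/P_E^{t}(n)$. This is valid, though strictly speaking the inequality you need is not among (\hyperlink{C1'}{C1'})--(\hyperlink{C6'}{C6'}); you would have to add it to the list of conditions imposed when choosing $m$. (Incidentally, uniformity is not what makes the left side degree $\le d-1$; that follows simply because both reduced polynomials begin with $k^d/d!$.)

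The genuine gap is the final step, where you pass from regularity of the saturation $F$ to regularity of $E'$ itself. Your parenthetical sketch --- that $E'$ has the same reduced multi-Hilbert polynomial as $E$ and hence is saturated --- is not what is shown, and nothing in your estimates forces $p_{E'}^{t}=p_E^{t}$. The paper's argument is different and uses the quantitative margin that (\hyperlink{C6'}{C6'}) provides: one first proves the lower bound
\[
\sum\nolimits_j \sigma_j(t)\,h^0(E'\otimes L_j^n\otimes B_j)\,\rank(E) \;>\; P_E^{t}(n)\,\rank(E') \;-\; \frac{1}{\max_j \vol(L_j)},
\]
and then argues by dichotomy. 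If $h^0(E'\otimes L_j^n\otimes B_j)=h^0(F\otimes L_j^n\otimes B_j)$ for some $j$, then global generation of $F\otimes L_j^n\otimes B_j$ (from its $(n,\underline L,\underline B)$-regularity) forces $E'=F$. Otherwise the strict gap of at least $1$ in every $j$, combined with Le~Potier--Simpson applied now to $F$ and the bound $\sum_j\sigma_j(t)\ge 1/\max_j\vol(L_j)$, violates the displayed lower bound. Your growth-comparison inequality does not come with such a fixed margin, so it does not directly feed this dichotomy; to complete the proof along your lines you would need to recover a quantitative lower bound of this type, which is precisely what (\hyperlink{C6'}{C6'}) is designed to give.
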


\begin{proof}
Write $M' = \oplus_j V_j'\oplus W_j'$ and $E' = E_{sum}(V_1',\ldots,V_{j_0}')$.  From \Iref{Proposition}{prop:tightmodulescomefromsheaves} we know that $M'$ is subordinate to $\Hom(T,E')$.  So, the issue is to prove that $E'$ is $(n,\underline{L},\underline{B})$-regular. To this end, we first claim that
\begin{equation}
 \sum\nolimits_j \sigma_j(t) h^0(E'\otimes L_j^n\otimes B_j) \rank(E) > P_E^t(n)\rank(E') -\frac{1}{\max_j \vol(L_j)}.\label{lem:crucialineq1}
\end{equation}
To see this, suppose for contradiction that in fact
$$ \sum\nolimits_j \sigma_j(t) h^0(E'\otimes L_j^n\otimes B_j) \rank(E) \le P_E^t(n)\rank(E') -\frac{1}{\max_j \vol(L_j)}.$$
By (\hyperlink{C4'}{C4'}) we know $E'$ is $(m,\underline{L},\underline{B})$-regular, and so using  condition (\hyperlink{C6'}{C6'}) we have
\begin{align*}
  \mu_t(\Hom(T,E')) = \frac{\sum_j \sigma_j(t) h^0(E'\otimes L_j^n\otimes B_j)}{P_{E'}^t(m)} &\le   \frac{P_E^{t}(n)}{P_E^t(m)}-\frac{1}{P_E^t(m)P_{E'}^t(m)}\\
  &< \frac{P_E^{t}(n)}{P_E^t(m)} =
  \mu_t(M).
\end{align*}
 On the other hand $M'$ is subordinate to $\Hom(T,E')$, so certainly $\mu_t(\Hom(T,E'))\ge \mu_t(M')=\mu_t(M)$ which gives the desired contradiction.

Thus, by the assumption that $E$ is semistable and by part a) of (\hyperlink{C2'}{C2'}) the ``(1) implies (2)'' direction of the Uniform Le Potier-Simpson Theorem (Theorem \eqref{thm:lepotiersimpsonuniform}) holds, so we know that the saturation $F$ of $E'$ is $(n,\underline{L},\underline{B})$-regular.  So, it is sufficient to prove that $E'$ is saturated.

To this end, observe that if $h^0(E'\otimes L_j^n\otimes B_j) = h^0(F\otimes L_j^n\otimes B_j)$ for some $j$, 
then $F= E'$ as $F\otimes B_j\otimes L_j^n$ is globally generated, and so we are done.  So, suppose this is not the case, i.e., $h^0(E'\otimes L_j^n\otimes B_j)\le h^0(F\otimes L_j^n\otimes B_j)-1$ for all $j$.  Then, using $\rank(E') = \rank(F)$ we obtain
$$\frac{\sum_j \sigma_j(t) h^0(E'\otimes L_j^n\otimes B_j)}{\rank(E')}  \le\frac{\sum \sigma_j(t) h^0(F\otimes L_j^n\otimes B_j)}{\rank(F)} - \frac{\sum_j \sigma_j(t)}{\rank(F)}.$$  As $E$ is semistable, we can again apply the ``(1) implies (2)'' direction of the Le Potier-Simpson (Theorem \ref{thm:lepotiersimpsonuniform})  to give that the right hand side of the previous equation is in turn bounded by
$$ \frac{P_E^t(n)}{\rank(E)} - \frac{\sum_j \sigma_j(t)}{\rank(F)} = \frac{P_E^t(n)}{\rank(E)} - \frac{\sum_j \sigma_j(t)}{\rank(E')},$$
and so
\begin{align*}
\sum\nolimits_j \sigma_j(t) h^0(E'\otimes L_j^n\otimes B_j) \rank(E) &\le P_E^t(n) \rank(E') - \sum\nolimits_j \sigma_j(t) \rank(E)\\
&\le P_E^t(n) \rank(E') - \frac{1}{\max_j \vol(L_j)},
\end{align*}
as $\sum_j \sigma_j(t) \cdot \max_j \vol(L_j) \ge \sum_j \sigma_j(t) \vol(L_j) =1$.  This contradicts the above claim and therefore completes the proof.
\end{proof}

The next lemma is a slight refinement of our Comparison of Semistability Theorem for a single stability parameter, which could just as well have been proved in \Iref{Section}{sect:ComparisonOfSemistability}.

\begin{lemma}\label{lem:comparisonimproved}
Let $t=t_i$ for some $i$.  Suppose that $E$ is $(p,\underline{L},\underline{B})$-regular of topological type $\tau$ and that $E'\subset E$ has $p_{E'}^{t} > p_E^{t}$.   Then, $\mu_{t}(\Hom(T,E')) >\mu_{t}(\Hom(T,E))$.  
\end{lemma}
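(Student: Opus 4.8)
The plan is to adapt, with $t=t_i$ held fixed throughout, the proof of the single-parameter Comparison of Semistability from \cite{GRTI}, now exploiting the \emph{uniform} choice of integers $m\gg n\gg p$ fixed above.

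The first step is to reduce to the case that $E'$ is saturated in $E$. Since the polynomial inequality $p_{E'}^{t}>p_E^{t}$ forces $\hat\mu^{\sigma(t)}(E')\ge\hat\mu^{\sigma(t)}(E)$, the saturation $F$ of $E'$ in $E$ satisfies $\hat\mu^{\sigma(t)}(F)\ge\hat\mu^{\sigma(t)}(E')\ge\hat\mu^{\sigma(t)}(E)$ and $p_F^{t}\ge p_{E'}^{t}>p_E^{t}$; hence $F\in\mathcal S_2$ and, by (\hyperlink{C2'}{C2'})(b) together with the choice of $m$, $F$ is both $(n,\underline{L},\underline{B})$- and $(m,\underline{L},\underline{B})$-regular. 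To descend from $F$ back to $E'$ one uses that $p_{E'}^{t}>p_E^{t}$ is a \emph{strict} inequality of polynomials: this opens a definite gap between $\mu_t(\Hom(T,F))$ and $\mu_t(\Hom(T,E))$, wide enough to absorb the controlled change of $\mu_t$ caused by passing between $E'$ and $F$ --- controlled because the relevant image sheaves lie in the bounded family $\mathcal S_1$ and by the level-$n$-to-level-$m$ estimate (\hyperlink{C6'}{C6'}) --- exactly as in the corresponding argument of \cite{GRTI}.

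So assume $E'$ is saturated, hence $E'\in\mathcal S_2$ and $(n,\underline{L},\underline{B})$- and $(m,\underline{L},\underline{B})$-regular. Then $h^0(E'\otimes L_j^n\otimes B_j)=\chi(E'\otimes L_j^n\otimes B_j)$ and $h^0(E'\otimes L_j^m\otimes B_j)=\chi(E'\otimes L_j^m\otimes B_j)$ for every $j$, and the same equalities hold for $E$ because $E$ is $(p,\underline{L},\underline{B})$-regular and $m\gg n\gg p$; consequently
\[
\mu_t(\Hom(T,E'))=\frac{P_{E'}^{t}(n)}{P_{E'}^{t}(m)}\qquad\text{and}\qquad\mu_t(\Hom(T,E))=\frac{P_{E}^{t}(n)}{P_{E}^{t}(m)} .
\]
Thus the claimed inequality $\mu_t(\Hom(T,E'))>\mu_t(\Hom(T,E))$ is equivalent to $f_{E'}(t_i)>0$, where $f_{E'}$ is the function appearing in \eqref{eq:extrachambers}.

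Finally, to check $f_{E'}(t_i)>0$: writing $P_\bullet^{t}=\rank(\bullet)\,p_\bullet^{t}$ and putting $\delta=p_{E'}^{t_i}-p_E^{t_i}$, a polynomial of degree at most $d-1$ whose leading nonzero coefficient is positive because $p_{E'}^{t_i}>p_E^{t_i}$, one computes
\[
f_{E'}(t_i)=\rank(E)\,\rank(E')\bigl(\delta(n)\,p_E^{t_i}(m)-p_E^{t_i}(n)\,\delta(m)\bigr).
\]
Since $p_E^{t_i}$ has degree $d$ while $\delta$ has degree at most $d-1$, and $\delta(n)>0$ for $n\gg0$, the first summand dominates once $m\gg n$, so $f_{E'}(t_i)>0$; and because $\mathcal S_2$ is bounded only finitely many $\delta$ occur, so (enlarging $m$ if necessary) one fixed choice $m\gg n$ settles all of them. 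The genuinely delicate point is the reduction to the saturated case in the second paragraph --- controlling how $\mu_t(\Hom(T,\cdot))$ behaves under saturation of a subsheaf --- which is carried out just as in \cite{GRTI}.
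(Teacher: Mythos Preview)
Your reduction-to-the-saturated-case paragraph is the weak point. The descent you sketch --- absorbing the change in $\mu_t$ between $E'$ and its saturation $F$ via (C6') and membership in $\mathcal S_1$ --- is not an argument that appears in \cite{GRTI}, and you have not actually supplied one: an arbitrary non-saturated $E'$ need not lie in $\mathcal S_1$, and (C6') compares $h^0$-counts to $P_E^t(n)\rank(E')$, not $E'$ to its saturation. This is a genuine gap. In practice the reduction is also unnecessary: the paper's own proof tacitly treats $E'$ as saturated (it asserts $E'\in\mathcal S_2$, a family of \emph{saturated} subsheaves, and then invokes Remark~\ref{rmk:LePotierAdvanced}, which requires saturation), and the lemma is only ever applied in Theorem~\ref{thm:semistabilityuniform} to a saturated destabilising subsheaf.

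For saturated $E'$ your route and the paper's diverge. The paper argues by contradiction: assuming $\mu_t(\Hom(T,E'))\le\mu_t(\Hom(T,E))$ and unwinding gives
\[
\textstyle\sum_j\sigma_j(t)\,h^0(E'\otimes L_j^n\otimes B_j)\,P_E^t(m)\ \le\ \textstyle\sum_j\sigma_j(t)\,h^0(E'\otimes L_j^m\otimes B_j)\,P_E^t(n);
\]
condition (C5') (available precisely because $t=t_i$) converts this numerical inequality at $m$ into the polynomial inequality $P_E^t\cdot\sum_j\sigma_j(t)h^0(E'\otimes L_j^n\otimes B_j)\le P_{E'}^t\,P_E^t(n)$; reading off the leading coefficient and invoking the Le~Potier--Simpson estimate (Remark~\ref{rmk:LePotierAdvanced}) yields $p_{E'}^t\le p_E^t$, a contradiction. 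Your approach --- computing $f_{E'}(t_i)=\rank(E)\rank(E')\bigl(\delta(n)p_E^{t_i}(m)-p_E^{t_i}(n)\delta(m)\bigr)$ directly and arguing it is positive --- is correct in spirit, but the phrase ``enlarging $m$ if necessary'' is illegitimate: the integers $m\gg n\gg p$ are fixed once and for all before the lemma. What makes your inequality hold for the \emph{already-chosen} $m$ is precisely (C5') applied with $c_j=h^0(E'\otimes L_j^n\otimes B_j)$, together with the condition~(i) built into (C2')(a) (via Lemma~\ref{lem:uniformcompare}) which guarantees $\delta(n)>0$. So your computation ultimately rests on the same ingredients as the paper's, just repackaged; the paper's contradiction-plus-Le~Potier--Simpson argument has the advantage of not needing to re-open the choice of $m$.
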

\begin{proof}
We have $\hat{\mu}_{E'}^{t_i} \ge \hat{\mu}_{E}^{t_i}$ and so $E'\in \mathcal S_{2}$, which implies $E'$ is $(n,\underline{L},\underline{B})$-regular by (\hyperlink{C2'}{C2'}).  Suppose for contradiction $\mu_{t}(\Hom(T,E'))\le \mu_t(\Hom(T,E))$.  Then,
$$ \sum\nolimits_j \sigma_j(t) h^0(E'\otimes L_j^n\otimes B_j) P_E^t(m) \le \sum\nolimits_j \sigma_j(t) h^0(E'\otimes L_j^m\otimes B_j) P_E^t(n).$$
Now by (\hyperlink{C5'}{C5'}) (since $t$ is assumed to be among the $\{t_i\}$) we deduce
$$P_E^t \sum\nolimits_j \sigma_j(t) h^0(E'\otimes L_j^n\otimes B_j)  \le P_{E'}^t P_E^t(n)$$
and so taking the leading order term in these polynomials
$$ r_E^t \sum\nolimits_j \sigma_j(t) h^0(E'\otimes L_j^n\otimes B_j) \le r_{E'}^t P_E^t(n).$$
Thus, by the Le Potier-Simpson Theorem (as discussed in Remark \ref{rmk:LePotierAdvanced}), this implies $p_{E'}^t\le p_E^t$, which contradicts our choice of $E'$.
\end{proof}

\begin{lemma}\label{lemma:linalg}
Suppose an $A$-module $M$ is semistable with respect to $t'\in (0,1)$ and not semistable with respect to $t''\in (0,1)$.  Then there exists a $t$ between $t'$ and $t''$ and a submodule $M'$ of $M$ such that 
\begin{enumerate}
\item $M$ is properly semistable with respect to $t$.  
\item $\mu_t(M') = \mu_t(M)$.
\item The function $s\mapsto \mu_s(M') - \mu_s(M)$ is not identically zero.
\end{enumerate}

\end{lemma}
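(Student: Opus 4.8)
The plan is to reduce this to an elementary one-variable argument, the point being that although $M$ has infinitely many submodules, the destabilising behaviour across $[t',t'']$ is governed by only finitely many polynomials of degree at most two in $t$. First I would record that a submodule $M'=\bigoplus_j V_j'\oplus W_j'\subseteq M$ is determined numerically by the tuple $\nu=\big((\dim V_j')_j,(\dim W_j')_j\big)$ and that only finitely many such tuples occur (since $0\le\dim V_j'\le\dim V_j$ and $0\le\dim W_j'\le\dim W_j$). To each $\nu$ I attach the polynomial
\[
q_\nu(t):=\Big(\sum\nolimits_j\sigma_j(t)\dim V_j\Big)\Big(\sum\nolimits_j\sigma_j(t)\dim W_j'\Big)-\Big(\sum\nolimits_j\sigma_j(t)\dim V_j'\Big)\Big(\sum\nolimits_j\sigma_j(t)\dim W_j\Big),
\]
which has degree at most $2$ (each $\sigma_j$ being linear) and depends only on $\nu$ and on the fixed $M$. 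Since $\sigma_j(t)>0$ for $t\in(0,1)$, clearing denominators shows that for a submodule $M'$ of type $\nu$ and $t\in(0,1)$ one has $\mu_t(M')\le\mu_t(M)\iff q_\nu(t)\ge 0$, with equality on the left iff $q_\nu(t)=0$, and $q_\nu\equiv 0$ iff $s\mapsto\mu_s(M')-\mu_s(M)$ vanishes identically; the degenerate submodules (all $W_j'=0$) are handled uniformly by the same formula. Hence $M$ is semistable at $t\in(0,1)$ iff $q_\nu(t)\ge 0$ for every $\nu$ realised by a proper non-zero submodule, and $M$ fails to be stable at $t$ as soon as $q_\nu(t)=0$ for such a $\nu$.

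Assuming (after possibly swapping the endpoints) that $t'<t''$, I would then set $\bar t:=\inf U$, where $U:=\{\,t\in[t',t'']:M\text{ is not semistable at }t\,\}$. The set $U$ is open in $[t',t'']$ (a finite union of the loci $\{q_\nu<0\}$), non-empty (it contains $t''$) and disjoint from $[t',\bar t]$ (as $\bar t=\inf U$ and $U$ is open, $\bar t\notin U$); hence $M$ is semistable at every $t\in[t',\bar t]$, in particular at $\bar t$, and since $M$ is not semistable at $t''$ we get $\bar t\neq t''$, so $\bar t\in[t',t'')\subset(0,1)$. Now I would pick a sequence $u_p\in U$ with $u_p\downarrow\bar t$, and for each $p$ a proper non-zero submodule $M_p'\subseteq M$ destabilising at $u_p$, that is, with $q_{\nu_p}(u_p)<0$ for $\nu_p$ its numerical type. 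By finiteness of the set of types, some $\nu$ occurs as $\nu_p$ for infinitely many $p$; fixing such a $\nu$, writing $q:=q_\nu$, and letting $M'$ be one of the corresponding submodules $M_{p_0}'$, I have $q(u_p)<0$ along a subsequence $u_p\to\bar t$, hence $q(\bar t)\le 0$ by continuity, while semistability of $M$ at $\bar t$ forces $q(\bar t)\ge 0$. Therefore $q(\bar t)=0$, i.e.\ $\mu_{\bar t}(M')=\mu_{\bar t}(M)$, which is conclusion (2) with $t=\bar t$; as $M'$ is a proper non-zero submodule and $M$ is semistable at $\bar t$, this is precisely conclusion (1); and $q(u_{p_0})<0$ gives $q\not\equiv0$, which is conclusion (3).

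The routine parts are the degree-$\le 2$ polynomial bookkeeping and the standard fact about the infimum of an open set, and the step I expect to need the most care — and which I regard as the heart of the argument — is securing all three conclusions simultaneously at the same parameter $\bar t$. The parameter must be chosen on the boundary of the non-semistable locus so that $M$ is semistable but not stable there, which uses that the semistable locus is closed; the submodule $M'$ realising $\mu_{\bar t}(M')=\mu_{\bar t}(M)$ must be an actual submodule rather than merely a numerical type, and this is exactly what the finiteness of the possible numerical types buys us, by letting one pass from the sequence $(M_p')$ to a single $M'$; and $M'$ must have reduced slope not identically equal to that of $M$, which is automatic because $M'$ destabilises $M$ somewhere. (If $\bar t=t'$ then $M$ is already properly semistable at $t'$ and that same submodule serves; this degenerate case is harmless, and it cannot occur when $M$ is stable at $t'$.)
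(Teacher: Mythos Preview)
Your proof is correct and follows essentially the same approach as the paper's: both exploit the finiteness of dimension vectors of submodules and locate the boundary point between the semistable and non-semistable locus in $[t',t'']$, then extract a submodule witnessing proper semistability there. The only differences are cosmetic---you work with the cross-multiplied polynomial $q_\nu$ rather than the rational function $g_{\underline e}(s)=\mu_s(M')-\mu_s(M)$, you take $\bar t=\inf\{s:M\text{ not semistable at }s\}$ where the paper takes the dual $t=\sup\{s:g_{\underline e}\le 0\text{ on }[t',s]\text{ for all }\underline e\in\mathcal D_0\}$, and you use a sequence-plus-pigeonhole step where the paper argues directly that some $g_{\underline e}$ must vanish at the supremum.
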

\begin{proof}
Assume $t'<t''$ (the other case being proved in the same way).  The set $\mathcal D$ of dimension vectors $\underline{e}$ of non-zero submodules $M'$ of $M$ is finite.  For each such $\underline{e}\in \mathcal D$ set 
$$g_{\underline{e}}(s) := \mu_s(M') - \mu_s(M) \text{ for } s\in [t',t'']$$ 
and
$$ \mathcal D_0 := \{ \underline{e} \in \mathcal D : g_{\underline{e}}(\cdot) \text{ is not identically } 0\}$$
As $M$ is not semistable with respect to $t''$ the set $\mathcal D_0$ is non-empty.

Now if $\underline{e}\in \mathcal D_0$ we have $g_{\underline{e}}(t')\le 0$ as $M$ is semistable with respect to $t'$.   Define
$$ t:=\sup \{ s\in [t',t'') : g_{\underline{e}}\le 0 \text{ on } [t',s] \text{ for all } \underline{e}\in \mathcal D_0\}.$$
Then clearly $M$ is semistable with respect to $t$, and there must be some $\underline{e}\in \mathcal D_0$ for which $g_{\underline{e}}(t)=0$.    Letting $M'$ be a submodule with dimension vector $\underline{e}'$ proves the lemma.
\end{proof}

\begin{proof}[Proof of Theorem \ref{thm:semistabilityuniform}]

Suppose first $E$ is semistable with respect to some $t\in [t',t'']$.  Then by definition it is torsion-free, and it is $(p,\underline{L},\underline{B})$-regular by choice of $p$.   We aim to show that $M:=\Hom(T,E)$ is also semistable with respect to $t$.  

To this end, suppose first that $t=t_{i}$ for some $i$.  Then, certainly $M$ is semistable by the Preservation of Semistability with respect to these points (cf.\ Theorem \ref{thm:absolutecase}).  Thus, we may assume $t\in (t_{i},t_{i+1})$ for some $i$.  Then, $E$ must by also semistable with respect to all $t'\in [t_i,t_{i+1}]$, since by construction $t_i$ were the walls at which semistability of sheaves of this type may change and our stability segment is assumed to be uniform.  In particular this is true at the endpoints, so once again by Theorem \ref{thm:absolutecase} the module $M$ is semistable with respect to $t_i$ and $t_{i+1}$.

Suppose for contradiction $M$ is not semistable with respect to this $t$.  By Lemma \ref{lemma:linalg} there is some $t'\in [t_i,t_{i+1}]$ and a submodule $M'\subset M$ such that $M$ is properly semistable with respect to $t'$, $\mu_{t'}(M') = \mu_{t'}(M)$, and so $s\mapsto \mu_s(M')-\mu_s(M)$ is not identically zero. Note that this function is not linear in general.

 Thus, we can apply Lemma \ref{lem:crucial} to deduce there is an $(n,\underline{L},\underline{B})$-regular subsheaf $E'\subset E$ such that $M'$ is subordinate to $\Hom(T,E')$.  So
\begin{equation}
\mu_{t'}(\Hom(T,E')) \ge \mu_{t'}(M') =\mu_{t'}(M)\label{eq:linearfunction1}
\end{equation}
and hence $\mu_{t'}(\Hom(T,E')) = \mu_{t'}(M)$ by semistability of $M$ with respect to $t'$.  So $\mu_{t'}(M') = \mu_{t'}(\Hom(T,E'))$ and thus in fact $M'= \Hom(T,E')$.

Now by Lemma~\ref{lem:uniformcompare}, since we are assuming that our stability segment is uniform,  the function 
\begin{equation} \label{eq:crossmult}
 f_{E'}(s)=  P_{E'}^s(n) P_E^s(m) - P_{E}^s(n) P_{E'}^s(m)
\end{equation}
is linear in $s$, and by  $(n,\underline{L},\underline{B})$-regularity of both $E'$ and $E$ 
\begin{align}\label{eq:fE}
f_{E'}(s)&=  P_{E'}^s(m) P_{E}^s(m) \left( \frac{P_{E'}^s(n)}{P_{E'}^s(m)} -  \frac{P_{E}^s(n)}{P_{E}^s(m)}\right) \\&=   P_{E'}^s(m) P_{E}^s(m) \left( \mu_s(\Hom(T,E')) - \mu_s(M)\right).\label{eq:fE2}
\end{align}
But this is absurd, since by the above $f_{E'}(\cdot)$ is linear and not identically zero, $f_{E'}(t')= 0$, but $f_{E'}(t_i)\le 0$ and $f_{E'}(t_{i+1})\le 0$, as $\Hom(T,E)$ is semistable at $t_i$ and $t_{i+1}$.   Thus, we have shown that $M$ is semistable with respect to $t$, as required.

For the converse, suppose $M$ is semistable with respect to some $t\in [t_i,t_{i+1}]$ and we wish to show that $E$ is semistable with respect to $t$.  

As above, by Theorem \ref{thm:absolutecase} we are done if  $t$ is in the set $\{t_i\}$, so we may assume $t\in (t_i,t_{i+1})$.  We suppose for contradiction $E$ is not semistable with respect to $t$.    Then, there exists a saturated $E'\subset E$ with $p_{E'}^t>p_E^t$.    So certainly $\hat{\mu}^t_{E'}\ge \hat{\mu}^t_E$ and by (\hyperlink{C2'}{C2'}) $E'$ is then $(n,\underline{L},\underline{B})$-regular.  
Now clearly we must either have $p_{E'}^{t_{i}}>p_E^{t_i}$ or  $p_{E'}^{t_{i+1}}>p_E^{t_i+1}$ by linearity of the $\sigma_j$ and by uniformity of $(\sigma(t))_{t\in [0,1]}$.
From Lemma \ref{lem:comparisonimproved} this implies either $\mu_{t_i}(\Hom(T,E'))>\mu_{t_i}(M)$ or $\mu_{t_{i+1}}(\Hom(T,E'))>\mu_{t_{i+1}}(M)$.  Thus, with $f_{E'}$ as in \eqref{eq:crossmult} above we deduce from \eqref{eq:fE2} that $f_{E'}(t_i)>0$ or $f_{E'}(t_{i+1})>0$.   In particular, we conclude $f_{E'}$ is not identically zero.    But $M$ was assumed to be semistable with respect to $t$, so certainly $f_{E'}(t)\le 0$, which by our choice of $\{t_i\}$ implies $t=t_i$ or $t=t_{i+1}$ which is absurd.  Thus, $E$ is semistable as claimed.

Finally, we turn to the statement about $S$-equivalence.  By Theorem \ref{thm:absolutecase}, the statement we want holds at any point in $\{t_i\}$, so our interest lies in the open interval $(t_i,t_{i+1})$.   Again by construction, the $S$-equivalence class of any semistable $E$ taken with respect to any point within $(t_i,t_{i+1})$ is independent of that point.  We claim that the same is true for the semistable module $M=\Hom(T,E)$.

To see this, suppose that $M$ is semistable with respect to some point $t\in (t_i,t_{i+1})$ and that $M'\subset M$ is destabilising with respect to $t$.  

By Lemma \ref{lem:crucial} we may find a subsheaf $E'$ of $E$ such that $M'$ is subordinate to $\Hom(T,E')$ and thus $\mu_t(M')\le\mu_t(\Hom(T,E'))$ \Iref{Lemma}{lem:tightslope}.  Hence, we may suppose that $M'= \Hom(T,E')$ for some $E'\subset E$ that is $(n,\underline{L},\underline{B})$-regular.  So the function $f_{E'}(\cdot)$ is linear.  Thus, we have $f_{E'}(t)=0$ but $f_{E'}(\cdot)\le 0$ over all of $(t_i,t_{i+1})$ by semistability of $M$.  Therefore, $f_{E'}\equiv 0$, and so $M'$ is a destabilising submodule of $M$  with respect to any point in $(t_i,t_{i+1})$.

Now, by Theorem \ref{thm:absolutecase} applied to the chosen points $t_i'\in (t_i,t_{i+1})$ we have $\Hom(T,gr(E))$ is isomorphic to $gr(\Hom(T,E))$.  Hence, this must also hold for all points in $(t_i,t_{i+1})$, as the isomorphism class of both sides are unchanged within this interval, completing the proof.
\end{proof}

\begin{remark}\label{remark:chambers}
  The above argument has an interpretation in terms of the various chamber structures that are in play.  Let $\Sigma\subset (\mathbb R_{\ge 0})^{j_0}\setminus\{0\}$ be a bounded subset of stability parameters.  Then, as we have seen, $\Sigma$ has a linear chamber structure that witnesses the change in multi-Gieseker-stability as $\sigma\in\Sigma$ varies.  
On the other hand, representations $M$ of the corresponding quiver are parametrised by a representation space $R$ on which a product $G$ of general linear groups act, and here GIT-stability has a linear chamber structure on $\Pi:=\mathbb Q^{2j_0}\setminus\{0\}$ obtained as the character $\chi_{\theta}$ varies with $\theta\in \Pi$.    

We recall the assignment $\sigma \mapsto \chi_{\theta}$ defined in \Iref{Section}{subsubsect:stabilityandmodulispaces} was given by
$$ \theta_{v_j} = \frac{ \sigma_j}{\sum_i \sigma_i d_{v_i}}\quad \theta_{w_j} = \frac{ -\sigma_j}{\sum_i \sigma_i d_{w_i}},$$
which is in general \emph{non-linear} in $\sigma$.  Thus, in general, the linear walls in the space $\Pi$ will not pullback to the linear walls in $\Sigma$, and then there is no reason to expect the variation of GIT to properly reflect the variation of definition of multi-Gieseker-stability.

However, what is essentially proved above, is that if $(\sigma(t))_{t \in [0,1]}$ is uniform, then the (non-trivial) walls in $\Pi$ do in fact pullback to linear walls in $\Sigma$.  
This is because (Lemma \ref{lem:crucial}) the non-trivial walls (namely those for which there is an $A$-module of the form $\Hom(T,E)$ that is properly semistable) are detected by $(n,\underline{L},\underline{B})$-regular subsheaves $E'\subset E$, and from Lemma \ref{lem:uniformcompare}, the pullback of this wall is actually linear in $\Sigma$.     
In fact, it is possible to argue more generally and instead of using a segment of stability parameters prove that, under a suitable uniformity hypothesis again, that the two chambers structures essentially agree for $m\gg n\gg p\gg 0$.
\end{remark}

\subsection{Uniform Variation (Proof of Theorem \ref{thm:intermediate})}

The proof of Theorem \ref{thm:intermediate}, which identifies the intermediate spaces, is essentially the same as that for a finite set of stability parameters as in \Iref{Section}{subsect:MultiGiesekerVariation}.   There, we considered the union $Y = \bigcup_{\sigma\in \Sigma'} Q^{[\sigma\text{-}ss]}\subset R$ as $\sigma$ varies in a finite set $\Sigma'$ of stability parameters, together with its closure $Z = \overline{Y}$.  This is now replaced by $Y = \bigcup_{t\in  [t',t'']} Q^{[\sigma(t)\text{-}ss]}$, which is still a finite union as $\sigma(t)_{t\in [t',t'']}$ admits a finite chamber structure, and its closure $Z = \overline{Y}$.  By the Uniform Comparison of Semistability, Theorem~\ref{thm:semistabilityuniform},  the ``master space'' statement \Iref{Theorem}{thm:masterspace} $$Z^{\sigma(t)}:= R^{\sigma(t)\text{-}ss}\cap Z = Q^{[\sigma(t)\text{-}ss]}$$ now holds for all $t\in [t',t'']$.  In \Iref{Corollary}{cor:masterspace} we identified a cone $\mathcal C_G(Z)$ together with a chamber decomposition reflecting the change of sets of semistable points.  So, as $\sigma(t)$ varies with $t$, we have a path $\theta_{\sigma(t)}$ of characters which pass through the various chambers, that precisely witness the Thaddeus-flips that occur.  By the uniform statements in the previous section we know that each of these is a moduli space of sheaves, precisely as in proof of \Iref{Corollary}{cor:masterspace}.\hfill \qed

\part{Chamber Structures}

\section{Chamber structure for Gieseker-stability}\label{sec:chambrstructuregieseker}

Our goal here is to exhibit a chamber structure on the ample cone of a projective manifold that witnesses the change in Gieseker-stability as the polarisation varies.  For this, assume that $X$ is smooth of dimension $d$ over an algebraically closed field of characteristic zero, and let $\tau\in B(X)_{\mathbb Q}$.  We recall the slope of a torsion-free sheaf $E$ on $X$, as in \Iref{Definition}{def:slopstability}, with respect to $L\in \Amp(X)_{\mathbb R}$, or a curve class $\gamma\in N_1(X)_{\mathbb R}$ is
$$ \mu^L(E) = \frac{\int_X c_1(E) c_1(L)^{d-1}}{\rank(E)} \text{ and }  \mu_{\gamma}(E)=\frac{\int_X c_1(E)\cdot \gamma}{\rank(E)}.$$
We also recall $\Pos(X)_{\R}:= \{ \gamma\in N_{1}(X)_{\R}: \gamma = D^{d-1}\text{ for some }D\in \Amp(X)_{\R}\}$ and that the map $p\colon \Amp(X)_{\mathbb R}\to \Pos(X)_{\mathbb R}$ given by $p(x) = x^{d-1}$ is a homeomorphism \cite[Prop.~6.5]{GrebToma}. Observe that by definition for any torsion-free sheaf $E$ and any $L\in \Amp(X)_{\mathbb R}$ we have $\mu^L(E) = \mu_{p(L)}(E)$ . 

Now, suppose that $K\subset \Amp(X)_{\R}$ is open and relatively compact and let
$$ \Cone(K): = \{ \lambda L\in \Amp(X)_{\R} \mid L\in K\text{ and } \lambda\in \R_{>0}\}$$
be the cone over $K$ with the origin removed.

\begin{lemma}\label{lem:grothendicklemmavariant}
Let $\mathcal S'$ be a bounded family of torsion-free sheaves of topological type $\tau$.  Then, the set
\begin{equation*}
  \mathcal S := \left\{
F \left|\begin{split} & F \text{ is a saturated subsheaf of some } E\in \mathcal S' \text{ and } \\
&\mu_{\gamma}(F) \ge \mu_{\gamma}(E) \text{ for some } \gamma \text{ in the convex hull of } p(\Cone(K))
\end{split}\right.\right\}
\end{equation*}
is bounded.
\end{lemma}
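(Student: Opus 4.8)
The plan is to reduce the statement to the classical Grothendieck Lemma applied with respect to a single fixed polarisation.

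First I would eliminate both the cone and the convex hull. Any $\gamma$ in the convex hull of $p(\Cone(K))$ can be written as $\gamma=\sum_i t_i L_i^{d-1}$ with $t_i\ge 0$ not all zero and $L_i\in K$; since $\mu_\gamma$ is linear in $\gamma$ and $\mu_{L_i^{d-1}}=\mu^{L_i}$, the inequality $\mu_\gamma(F)\ge\mu_\gamma(E)$ reads $\sum_i t_i\bigl(\mu^{L_i}(F)-\mu^{L_i}(E)\bigr)\ge 0$ and therefore forces $\mu^{L_i}(F)\ge\mu^{L_i}(E)$ for at least one $i$. As $E$ has topological type $\tau$, we have $\mu^{L}(E)=\mu^{L}(\tau)$, which depends only on $L$. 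Hence, writing $\overline K$ for the (compact) closure of $K$ in $\Amp(X)_{\R}$,
$$\mathcal S\subseteq\bigcup_{L\in\overline K}\bigl\{\,F\ \text{saturated in some }E\in\mathcal S'\ \text{with}\ \mu^{L}(F)\ge\mu^{L}(\tau)\,\bigr\},$$
and it suffices to prove that this larger set is bounded.

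The key step is to pass from the varying class $L\in\overline K$ to a fixed polarisation $H$. Fix $H$ very ample and, using boundedness of $\mathcal S'$, an integer $N$ such that every $E\in\mathcal S'$ is $N$-regular with respect to $H$. Then $E(N)$, and hence any torsion-free quotient $Q(N)$ of it, is globally generated, so the divisor class $c_1(Q(N))=c_1(Q)+N\rk(Q)c_1(H)$ is nef, where $Q:=E/F$ (the cases $\rk F\in\{0,\rk\tau\}$ being trivial). By compactness of $\overline K$ there is $\varepsilon_0>0$ with $L-\varepsilon_0 H$ ample for all $L\in\overline K$; expanding $L^{d-1}=\bigl(\varepsilon_0 c_1(H)+(c_1(L)-\varepsilon_0 c_1(H))\bigr)^{d-1}$ shows that $L^{d-1}-\varepsilon_0^{d-1}H^{d-1}$ is a non-negative combination of products of nef divisor classes, and hence pairs non-negatively with the nef class $c_1(Q(N))$. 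Now if $\mu^{L}(F)\ge\mu^{L}(\tau)$ then $\mu^{L}(Q)\le\mu^{L}(\tau)$, and since $\mu^{L}(\tau)$ and $c_1(H)\cdot L^{d-1}$ are bounded above over the compact set $\overline K$ this gives $c_1(Q(N))\cdot L^{d-1}\le C'\rk(Q)$ for a constant $C'$ independent of $L$ and $F$. Therefore
$$\varepsilon_0^{d-1}\,c_1(Q(N))\cdot H^{d-1}\ \le\ c_1(Q(N))\cdot L^{d-1}\ \le\ C'\rk(Q),$$
so $\mu^{H}(Q)\le C$ for a fixed constant $C$; equivalently $\mu^{H}(F)$ is bounded below by a constant depending only on $\tau,H,N,C',\varepsilon_0$, uniformly over all $F$ under consideration.

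It then remains to invoke the usual Grothendieck Lemma (see \cite{Bible}): the family of torsion-free quotients $Q$ of members of the bounded family $\mathcal S'$ with $\mu^{H}(Q)\le C$ --- equivalently, of saturated subsheaves $F$ with $\mu^{H}(F)$ bounded below --- is bounded, and hence so is $\mathcal S$. The main obstacle is the third paragraph: the hypothesis only controls slopes in the direction of the moving class $\gamma$, and such control does not directly give a bound with respect to a fixed ample $H$. What rescues this is that boundedness of $\mathcal S'$ forces $Q(N)$ to be globally generated for a uniform $N$, so that $c_1(Q(N))$ is nef --- and nefness is precisely the positivity needed to compare $L^{d-1}$ with $\varepsilon_0^{d-1}H^{d-1}$ in the right direction.
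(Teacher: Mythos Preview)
Your argument is correct and takes a genuinely different route from the paper's. The paper proceeds by a soft compactness-and-covering argument: it covers the (compact) convex hull of $p(\overline K)$ by finitely many simplices whose vertices are of the form $p(A_i)$ with $A_i\in\Amp(X)_{\Q}$, observes that any $\gamma$ in such a simplex with $\mu_\gamma(F)\ge\mu_\gamma(E)$ forces $\mu^{A_i}(F)\ge\mu^{A_i}(E)$ for some vertex $A_i$, and then applies Grothendieck's Lemma once for each of the finitely many $A_i$ arising.

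Your approach is instead quantitative: you use uniform regularity of the bounded family $\mathcal S'$ to make $c_1(Q(N))$ pair non-negatively with products of ample classes, and then the binomial expansion $L^{d-1}=(\varepsilon_0 H + (L-\varepsilon_0 H))^{d-1}$ converts a slope bound with respect to the moving $L$ into one with respect to a single fixed $H$, whence a single invocation of Grothendieck's Lemma suffices. This yields an explicit uniform lower bound for $\mu^H(F)$ rather than a finiteness statement, which is a pleasant byproduct. One small point worth making explicit: for torsion-free (rather than locally free) $Q(N)$, the assertion that ``globally generated implies $c_1$ nef'' deserves a word of justification; for your purposes it is enough to note that $c_1(Q(N))\cdot A_1\cdots A_{d-1}\ge 0$ for \emph{ample} $A_i$, which follows by restricting to a general complete intersection curve that avoids the codimension~$\ge 2$ non-locally-free locus of $Q(N)$. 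This is all you use, and it is unproblematic.
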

\begin{proof}
Observe the inequality $\mu_{\gamma}(F)\ge \mu_{\gamma}(E)$ is invariant under replacing $\gamma$ by a positive multiple. So, as $\operatorname{Convexhull}(p(\Cone(K))\subset \Cone(\operatorname{Convexhull}(p(K))$, it is sufficient to prove the theorem with $\Cone(K)$ replaced with $K$.  

Fix $\tilde{\gamma}$ in the convex hull of $p(\overline{K})$.    Then, we can find $\gamma_1,\ldots,\gamma_m\in \Pos(X)_{\mathbb R}$ so that $\gamma_i = p(A_i)$ for some $A_i\in \Amp(X)_{\mathbb Q}$ and so $\tilde{\gamma}$ lies in the interior of the convex hull $H$ of $\gamma_1,\ldots,\gamma_m$.    We claim that the set 
$$\mathcal T_H:=\{ F : F \text{ is a saturated subsheaf of an } E\in \mathcal S' \text{ and } \mu_{\gamma}(F) \ge \mu_{\gamma}(E) \text{ for some } \gamma\in H\}$$
is bounded.  To prove this, suppose $\gamma$ is a convex combination of $\gamma_1,\ldots,\gamma_m$.  If $\mu_{\gamma}(F) \ge \mu_{\gamma}(E)$, then by convexity $\mu_{\gamma_i}(F) \ge \mu_{\gamma_i}(E)$ for some $i$, which says precisely $\mu^{A_i}(F) \ge\mu^{A_i}(E)$.  Thus, $\mathcal T_H$ is contained in a finite union of sets that are each bounded by Grothendieck's Lemma, \cite[Lem.~1.7.9]{Bible} or \cite[Th\'eor\`eme 2.2]{Groth}, proving the claim.

Now, the convex hull of the compact set $p(\overline{K})$ is also compact (a simple corollary of Carath\'eodory's theorem) and so can be covered with a finite union of sets of form $H$. Hence, we deduce that $\mathcal S$ is contained in a finite union of sets of the form $\mathcal T_H$, and thus is also bounded as required.
\end{proof}

We apply the preceding discussion as follows.  Define
\begin{equation}\label{eq:mathcalS'K}
  \mathcal S'_K := \left\{E \left|\begin{split}  & E \text{ is torsion-free of topological type } \tau \text{ that is }\\
& \text{slope semistable with respect to some } \gamma\in p(\Cone(K))
\end{split} \right.\right\}.
\end{equation}
Then, as slope semistability is invariant under scaling $\gamma$ by a positive multiple, this set is unchanged if $\Cone(K)$ is replaced with $K$.  Thus, $\mathcal S'_{K}$ is bounded by \Iref{Theorem}{thm:K^+}.  Consequently, by Lemma \ref{lem:grothendicklemmavariant} the set
\begin{equation}\label{eq:mathcalSK}
  \mathcal S_K := \left\{
F \left|\begin{split} & F \text{ is a saturated subsheaf of some } E\in \mathcal S'_K \text{ and } \\
&\mu^L(F) \ge \mu^L(E) \text{ for some } L\in \Cone(K)
\end{split}\right.\right\}
\end{equation}
is also bounded.  Now,  for each $F\in \mathcal S_{K}$ write the corresponding difference of reduced Hilbert-polynomials as
\begin{equation}
p_{F}^L(k) - p_E^L(k)  =\frac{1}{\vol(L)}\sum_{i=1}^{d} \beta_{F,i}^L \frac{k^{d-i}}{(d-i)!},\label{eq:definitionbeta}
\end{equation}
where $E$ is any sheaf of topological type $\tau$ and $\vol(L):=\int_{X}c_1(L)^{d}$.  Note that here we allow $L$ to be real, so these reduced Hilbert polynomials should be defined using the Riemann-Roch theorem (as discussed in \cite[Definition 11.1]{GRTI}, i.e.,
$$p_{F}^{L}(k) := \frac{1}{\rank(F)\vol(L)} \int_{X} e^{kc_1(L)}\ch(F)\Todd(X).$$
Hence, $L\mapsto \beta_{F,i}^L$ is a polynomial function on  $\Amp(X)_{\mathbb R}$ of degree $d-i$ (so in particular $\beta_{F,d}^{L}$ is independent of $L$). 

Moreover, for $i=1,\ldots,d-1$ and $F\in \mathcal S_K$ we set
$$ \tilde{W}_{F,i} = \{ L\in \Amp(X)_{\mathbb R} \mid \beta_{F,i}^L = 0\}$$
which we shall refer to as a \emph{wall}.  So, each such wall is either empty, all of $\Amp(X)_{\mathbb R}$, or a non-trivial real algebraic variety in $\Amp(X)_{\mathbb R}$.

\begin{definition}\label{def:mathcaltildeW}
We let $\tilde{\mathcal W}_K$ be the set of walls $\tilde{W}_{F,i}$ for $F\in \mathcal S_K$ and $i=1,\ldots,d-1$ such that $\tilde{W}_{F,i}$ is neither empty nor all of $\Amp(X)_{\mathbb R}$.
\end{definition}

Observe that $\tilde{W}_{F,i}$ depends only on the topological type of $F$, and by boundedness of $\mathcal S_{K}$ there are only finitely many such types.  Hence, the set $\tilde{\mathcal W}_{K}$ consists of   a finite number of (non-trivial) walls that divide $\Cone(K)$ into a number of chambers (see \cite[Definition 4.1]{GRTI} for the precise definition of a chamber).

\begin{proposition}[Existence of chamber structure for Gieseker-stability]\label{prop:chamberstructuregieseker}
The collection $\tilde{\mathcal W}_K$ of walls gives a chamber structure on $\Cone(K)$ that witnesses the change in Gieseker-stability as $L$ varies within $\Cone(K)$.  More precisely, if $L',L''\in \Cone(K)$ lie in the same chamber, then for any sheaf $E$ of topological type $\tau$ it holds that
\begin{enumerate}
\item $E$ is Gieseker-(semi)stable with respect to $L'$ if and only if it is Gieseker-(semi)stable with respect to $L''$.
\item If $E$ is Gieseker-(semi)stable with respect to $L'$ and $L''$ then the $S$-equivalence class of $E$ is the same taken with respect to either $L'$ or $L''$.
\end{enumerate}
\end{proposition}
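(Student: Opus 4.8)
The plan is to reduce the statement to the bounded family $\mathcal S_K$ of potential destabilising subsheaves, on which the functions $L \mapsto \beta^L_{F,i}$ are polynomial, and then to observe that within a single chamber of $\tilde{\mathcal W}_K$ no sign change of these functions can occur. First I would recall that, by Remark \ref{rem:RRochIII} (the Riemann--Roch discussion) together with \Iref{Theorem}{thm:K^+}, any sheaf $E$ of topological type $\tau$ that is Gieseker-semistable with respect to some $L\in \Cone(K)$ is slope semistable with respect to $p(L)\in p(\Cone(K))$, hence lies in $\mathcal S'_K$; and any saturated subsheaf $F\subset E$ with $p^L_F\ge p^L_E$ automatically satisfies $\mu^L(F)\ge\mu^L(E)$ (comparing the first nontrivial terms of the reduced Hilbert polynomials), so lies in $\mathcal S_K$. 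Thus, to test (semi)stability of $E$ with respect to any $L\in\Cone(K)$, it suffices to compare $p^L_F$ with $p^L_E$ for $F$ ranging over the bounded set $\mathcal S_K$, and by \eqref{eq:definitionbeta} this comparison is governed entirely by the ordered tuple of signs of $\beta^L_{F,1},\ldots,\beta^L_{F,d-1}$ (the leading coefficient $\beta^L_{F,d}$ being $L$-independent).

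Next I would carry out the core argument for (1). Suppose $L',L''$ lie in the same chamber and $E$ is Gieseker-semistable with respect to $L'$ but not with respect to $L''$. Then there is a saturated $F\subsetneq E$ with $p^{L''}_F>p^{L''}_E$, so by the preceding paragraph $F\in\mathcal S_K$, and the first index $i$ at which $\beta^{L''}_{F,i}\neq 0$ has $\beta^{L''}_{F,i}>0$; meanwhile, by semistability at $L'$, the same sheaf $F$ has $\beta^{L'}_{F,j}=0$ for $j<i$ with either $\beta^{L'}_{F,i}<0$ or $\beta^{L'}_{F,i}=0$ and the first subsequent nonzero $\beta$ negative (or the whole tuple vanishing). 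In any case, some polynomial function $L\mapsto\beta^L_{F,i'}$ attached to $F\in\mathcal S_K$ takes a strictly positive value at $L''$ and a value that is zero or negative at $L'$; restricting to a path inside the chamber joining $L'$ and $L''$, this function vanishes at some interior point, placing that point on the wall $\tilde W_{F,i'}$. Since $F\in\mathcal S_K$ and $1\le i'\le d-1$, and since this wall is nontrivial (it separates $L'$ and $L''$, so it is neither empty nor all of $\Amp(X)_{\mathbb R}$), it belongs to $\tilde{\mathcal W}_K$, contradicting the assumption that $L'$ and $L''$ lie in the same chamber. The stable case is identical, replacing $>$ and $\ge$ by $\ge$ and $>$ throughout.

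For (2), assuming $E$ is semistable with respect to both $L'$ and $L''$, I would argue that the Jordan--H\"older filtration is constant along the chamber. A subsheaf $F\subset E$ is destabilising (i.e.\ $p^L_F=p^L_E$) precisely when all $\beta^L_{F,i}$ vanish; all the graded pieces of a JH filtration are again slope semistable, hence their topological types lie in the bounded family, so after enlarging to the bounded set generated by iterated saturated subsheaves of $\mathcal S'_K$ — which is still bounded by Lemma \ref{lem:grothendicklemmavariant} applied finitely often — the set of destabilising subsheaves is detected by the simultaneous vanishing of finitely many polynomials $\beta^\bullet_{F,i}$. If $F$ were destabilising at $L'$ but not at $L''$, some $\beta^L_{F,i}$ would vanish at $L'$ and be nonzero at $L''$, again forcing a wall of $\tilde{\mathcal W}_K$ between them. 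Hence the destabilising subsheaves, and so the JH filtration and its graded object, coincide at $L'$ and $L''$, giving the same $S$-equivalence class.

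The main obstacle I expect is not any single step but the bookkeeping in (2): one must be careful that all sheaves arising as subquotients in a JH filtration remain within a single bounded family across the whole cone (so that only finitely many walls are relevant), and that "being destabilising" really is cut out by the vanishing of the finitely many polynomial functions $\beta^L_{F,i}$ rather than by some unbounded condition. Once one commits to working with $\mathcal S_K$ (and its finitely-iterated analogue) and uses that $F\in\mathcal S_K$ is forced by $\mu^L(F)\ge\mu^L(E)$, which in turn is forced by $p^L_F\ge p^L_E$, the rest is the elementary observation that a sign change of a polynomial function along a connected set puts a zero of that function inside the set.
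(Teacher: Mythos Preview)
Your approach is essentially the same as the paper's, and the core idea (reduce to $\mathcal S_K$, then observe that a sign change of some $\beta^L_{F,i}$ along the connected chamber forces a non-trivial wall to meet the chamber) is exactly what is used.

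Two minor points on execution. For (1), your claim that $\beta^{L'}_{F,j}=0$ for $j<i$ (where $i$ is the first index with $\beta^{L''}_{F,i}\neq 0$) is not justified by semistability at $L'$ alone; the paper avoids this by choosing $i$ symmetrically as the largest index with $\beta^{L'}_{F,j}=\beta^{L''}_{F,j}=0$ for all $j<i$, so that $\beta^{L'}_{F,i}\le 0$ and $\beta^{L''}_{F,i}\ge 0$, not both zero, and the intermediate-value argument applies directly. For (2), your concern about enlarging $\mathcal S_K$ to iterated subquotients is unnecessary: if $H\subset E_i/E_{i-1}$ is a saturated subsheaf of a graded piece, its preimage $H'\subset E$ (with $E_{i-1}\subset H'\subset E_i$) is a saturated subsheaf of $E$ with $\mu^L(H')\ge\mu^L(E)$, hence already in $\mathcal S_K$, and one checks $p^L_{H'}=p^L_E$ if and only if $p^L_H=p^L_{E_i/E_{i-1}}$; so the original walls $\tilde{\mathcal W}_K$ suffice without enlargement.
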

\begin{proof}
The proof of this is essentially the same as that of \Iref{Proposition}{prop:weakchamberstructure}.  Let $E$ be of topological type $\tau$ and suppose for contradiction $E$ is Gieseker-semistable with respect to $L'$ and not Gieseker-semistable with respect to $L''$.  

Then, on the one hand $E$ is slope semistable with respect to $p(L')\in p(\Cone(K))$, and on the other hand there exists a saturated $F\subset E$ such that $p_F^{L''}>p_E^{L''}$.  In particular, this implies $\mu^{L''}(F)\ge \mu^{L''}(E)$ (cf.~ \Iref{Example}{ex:RRochI}), and so by definition $F\in \mathcal S_K$.  As above, write the difference of reduced Hilbert-polynomials as 
$$p_{F}^L(k) - p_E^L(k)  =\frac{1}{\vol(L)}\sum_{i=1}^{d} \beta_{F,i}^L \frac{k^{d-i}}{(d-i)!}.$$
So, we have $p_{F}^{L''}>p_E^{L''}$, but by stability of $E$ with respect to $L'$ also  $p_{F}^{L'}\le p_E^{L'}$.  Let $i$ be the largest integer such that $\beta_{F,j}^{L'}=\beta_{F,j}^{L''}=0$ for all $j<i$.  Then, $\beta_{F,i}^{L'}\le 0$ and $\beta_{F,i}^{L''}\ge 0$ (but not both equal to zero by choice of the index $i$).  This implies three things.  First, by continuity of the function $L\mapsto \beta_{F,i}^L$, and by connectedness of a chamber, some point of this chamber must be contained in $\tilde{W}_{F,i} = \{L \mid \beta^L_{F,i}=0\}$.  Second, the wall $\tilde{W}_{F,i}$ is non-trivial, so lies in $\tilde{\mathcal W}_K$.  And third, there exists a point in the chamber not contained in $\tilde{W}_{F,i}$.  Together, these three statements contradict the definition of what it means to be a chamber. 

The statement for Gieseker-stability and the statement about $S$-equivalence is proved in a similar way.
\end{proof}

\begin{remark}\ 
\begin{enumerate}
\item As the notation suggests, the set of walls $\tilde{\mathcal W}_K$ depends on the initial choice of $K$.  Clearly, the sets $\mathcal S'_K$ and $\mathcal S_K$ get larger as $K$ gets larger, so $\tilde{\mathcal W}_K$ also gets larger and there is no reason to expect the union over all such $K$ to still be finite.

There is an equivalence relation on $\Amp(X)_{\mathbb R}$ obtained by declaring  $L'$ and $L''$ to be equivalent if all sheaves $E$ of topological type $\tau$ are Gieseker-(semi)stable with respect to $L'$ if and only if this holds with respect $L''$.    Then, the above says that, when restricted to $\Cone(K)$, an equivalence class is given by a union of chambers associated with $\tilde{\mathcal W}_K$.   
\item If one instead only considers the non-trivial walls of the form $\{\tilde{W}_{F,1}\}$ as $F\in \mathcal S_K$, then one gets a coarser chamber structure that witnesses the change in slope stability as $L$ varies in $\Cone(K)$.

\item When discussing threefolds we will call a wall of the form $\tilde{W}_{F,1}$ a \emph{wall of the first kind} (which in this case is a quadratic wall) and a wall of the form $\tilde{W}_{F,2}$ a \emph{wall of the second kind} (which in this case is a linear wall).
\end{enumerate}
\end{remark}

\begin{remark}
 We used  above that the set of sheaves that are slope semistable with respect to some $\gamma$ varying in a given compact subset of $\Pos(X)_{\R}$ is bounded.  As far as we are aware, the first proof of a statement of this form appears in \cite{GrebToma} (an improved proof appears in \Iref{Theorem}{thm:K^+}).  It is because of this that we require $X$ to be smooth and the sheaves in question to be torsion-free.  It would be interesting to know whether such a chamber structure witnessing the change in Gieseker-stability exists more generally for pure sheaves or singular $X$. The next proposition shows that it does in another extreme case, namely that of one-dimensional sheaves.
\end{remark}

\begin{proposition}\label{prop:onedimensionalsheaves}
Let $X$ be a projective manifold of dimension at least two, let $\tau$ be a topological type of one dimensional coherent sheaves on $X$ and fix $K\subset \Amp(X)_{\R}$ a relatively compact open subset of $\Amp(X)_{\R}$.   Then,
\begin{enumerate}
\item The set of pure sheaves of type $\tau$ which are  (Gieseker)-semistable with respect to some polarisation from $K$ is bounded.
\item There exists a finite linear chamber structure on $\Cone(K)$ that witnesses the change in stability for sheaves of topological type $\tau$.
\end{enumerate}
\end{proposition}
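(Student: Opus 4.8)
\textbf{Proof plan for Proposition \ref{prop:onedimensionalsheaves}.}

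The plan is to mirror the strategy used for torsion-free sheaves in Proposition \ref{prop:chamberstructuregieseker}, but to replace the input on boundedness of slope-semistable sheaves (which relied on torsion-freeness and smoothness via \Iref{Theorem}{thm:K^+}) with a direct boundedness argument valid for one-dimensional sheaves. For part (1), I would first recall that for a pure sheaf $E$ of dimension one, the Hilbert polynomial with respect to $L\in\Amp(X)_\R$ has the form $P_E^L(k)=\big(\int_X c_1(L)\cdot [E]\big)k + \chi(E)$, where $[E]\in N_1(X)$ is the (fixed, since $\tau$ is fixed) support class and $\chi(E)$ is likewise determined by $\tau$. Thus the reduced Hilbert polynomial is $p_E^L(k)=k + \chi(E)/\deg_L(E)$ with $\deg_L(E):=\int_X c_1(L)\cdot[E]$, and Gieseker-semistability with respect to $L$ says exactly that $\chi(F)/\deg_L(F)\le \chi(E)/\deg_L(E)$ for all proper subsheaves $0\neq F\subset E$. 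Since all these sheaves have the same one-dimensional support class (up to effective pieces) and bounded $\chi$, one gets a bound on the multiplicities of $E$ along its support and on $\chi(E)$; combined with the fact that $K$ is relatively compact (so $\deg_L(E)$ stays in a fixed compact interval of positive reals as $L$ ranges over $K$), the standard boundedness criterion for sheaves with bounded support and bounded Hilbert polynomial (e.g.\ a Kleiman-type criterion, or directly via the Grothendieck Quot scheme once $\chi$ and the support are pinned down) yields the desired boundedness. The key point making this easier than the torsion-free case is that semistability of a one-dimensional sheaf already forces all its pure subsheaves to have controlled $\chi$, so no appeal to the Greb--Toma boundedness theorem is needed.

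For part (2), having fixed the bounded family $\mathcal{S}'_K$ of semistable sheaves and the associated bounded family $\mathcal{S}_K$ of their saturated subsheaves (bounded since a saturated subsheaf of a one-dimensional sheaf is again one-dimensional with support class dominated by that of $E$, and the destabilising inequality pins down its $\chi$), I would examine the difference of reduced Hilbert polynomials. Here $p_F^L(k)-p_E^L(k) = \chi(F)/\deg_L(F) - \chi(E)/\deg_L(E)$, which is a \emph{constant} in $k$; clearing denominators, its sign is governed by the single quantity $\chi(F)\deg_L(E) - \chi(E)\deg_L(F)$, which is a \emph{linear} function of $L\in\Amp(X)_\R$ (both $\deg_L(E)$ and $\deg_L(F)$ are linear in $L$). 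Defining the wall $W_{F}:=\{L\in\Amp(X)_\R \mid \chi(F)\deg_L(E)-\chi(E)\deg_L(F)=0\}$ for each topological type of $F\in\mathcal{S}_K$ — a genuine hyperplane whenever it is neither empty nor everything — I obtain a finite collection of linear walls (finite because $\mathcal{S}_K$ has finitely many topological types), and the chamber structure argument goes through verbatim: if $L',L''$ lie in one chamber and $E$ is semistable at $L'$ but not at $L''$, then some destabilising $F\in\mathcal{S}_K$ changes the sign of $\chi(F)\deg_L(E)-\chi(E)\deg_L(F)$ between $L'$ and $L''$, forcing the chamber to meet $W_F$ nontrivially, contradicting the definition of a chamber. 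The $S$-equivalence statement follows the same pattern.

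The main obstacle I anticipate is part (1), specifically making the boundedness argument clean: one must verify that semistability with respect to \emph{some} $L$ in the relatively compact set $K$ (rather than a fixed $L$) still bounds the sheaves. This is where relative compactness of $K$ is used essentially — it guarantees that $\deg_L(E)$ is bounded above and below by positive constants uniformly in $L\in K$, so the normalising denominators in the slope do not degenerate, and that the support classes $[E]$ of semistable $E$ (which must be effective and of bounded degree against a fixed ample class) form a finite set. Once that is in place, the passage to a finite linear chamber structure in part (2) is formally identical to the torsion-free case, only simpler since the relevant polynomials in $k$ are linear and the walls are linear rather than higher-degree, so I would keep the write-up of (2) short by referring to the proof of Proposition \ref{prop:chamberstructuregieseker}.
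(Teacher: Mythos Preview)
Your plan for part (2) is correct and matches the paper exactly: the paper dispatches it in one line by saying the argument ``parallels the previous one, linearity being obvious'', and your identification of the walls as the hyperplanes $\{L:\chi(F)\deg_L(E)=\chi(E)\deg_L(F)\}$ is precisely the right content behind that line.

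Part (1), however, has a real gap. Several of the statements you make are vacuous: the support class $[E]\in N_1(X)$ and $\chi(E)$ are already \emph{fixed} by $\tau$, so ``a bound on the multiplicities of $E$ along its support and on $\chi(E)$'' and ``the support classes $[E]$ \ldots\ form a finite set'' say nothing. Fixing the Hilbert polynomial alone does not bound the family of pure one-dimensional sheaves; one still needs a bound on $\mu_{\max}$ with respect to a \emph{single fixed} polarisation, and your sketch never carries out this reduction. Appealing to ``the Grothendieck Quot scheme once $\chi$ and the support are pinned down'' is circular, since embedding all the $E$'s in a common Quot scheme is exactly what boundedness means. Your intuition that ``semistability forces all pure subsheaves to have controlled $\chi$'' is right, but ``controlled'' must mean controlled \emph{with respect to a fixed $H$}, and that step is missing.

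The clean argument, which is what the paper does, is this. Fix a reference $H\in\Amp(X)_\Z$; relative compactness of $K$ gives $C_1,C_2>0$ with $C_1H<L<C_2H$ for all $L\in K$, hence $C_1\deg_H Z\le\deg_L Z\le C_2\deg_H Z$ for every effective one-cycle $Z$. If $E$ is $L$-semistable for some $L\in K$ and $0\neq F\subset E$, then $\chi(F)\le \chi(E)\deg_L(F)/\deg_L(E)\le \max(0,\chi(E))\,C_2\deg_H(F)/(C_1\deg_H(E))$, so $\mu^H(F)=\chi(F)/\deg_H(F)$ is bounded above by a constant depending only on $\tau$ and $K$. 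This bounds $\mu_{\max}^H(E)$ uniformly, and now Grothendieck's Lemma \cite[Lem.~1.7.9]{Bible} applied with the fixed polarisation $H$ gives the desired boundedness.
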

\begin{proof}
We first prove boundedness. 
If $E$ is a one-dimensional sheaf on $X$ and $L\in \Amp(X)_\R$, the corresponding Hilbert polynomial consists of only two terms:
$$ P^L_E(m)=\alpha_1^L(E)m+\alpha_0^L(E).$$
Note that $\alpha_0^L(E)$ does not depend on $L$ so we'll just write $\alpha_0(E):=\alpha_0^L(E)$. To $E$ we associate its (effective) one-dimensional algebraic support cycle which we denote by $Z(E)$ as in \cite[Example 15.1.5]{F}.  Then, $\alpha_1^L(E)=L.Z(E)$, which we shall write as $\vol_LZ(E):=L.Z(E)$.

Now fix $H\in\Amp(X)_\Z$ some reference polarisation. As $K$ is relatively compact, there exist positive constants $C_1, \ C_2$ allowing to compare the volumes, i.e., such that
$$C_1 H< L< C_2 H \text{ for all }L\in K.$$
Here, the inequality denotes the order relation induced by the cone $\Amp(X)_\R$ on $N^1(X)_\R$.
Suppose now that $E$ is semistable with respect to some $L\in K$ and let $F$ be any nontrivial coherent subsheaf of $ E$.  
Then, $\alpha_0(F)\vol_LZ(E)\le \alpha_0(E) \vol_LZ(F)$, hence
$$\alpha_0(F) \le \frac{\alpha_0(E) \vol_LZ(F)}{\vol_LZ(E)}\le 
\frac{\max(0,\alpha_0(E)) C_2 \vol_HZ(F)}{ C_1 \vol_HZ(E)}.$$ This implies the upper bound
$$\frac{\alpha_0(F)}{\vol_HZ(F)}\le\frac{C_2\max(0,\alpha_0(E))}{ C_1 \vol_HZ(E)}$$ for the slope of $F$ with respect to $H$. We thus obtain boundedness by  Grothendieck's Lemma \cite[Lemma 1.7.9]{Bible}. 

Once boundedness is established, the proof of the existence and finiteness of the chamber structure parallels the previous one, linearity being obvious.
\end{proof}

\section{Mumford-Thaddeus principle for general line bundles}\label{sec:variationgeneral}

In \cite{GRTI} we proved that any two moduli spaces of Gieseker-semistable sheaves on a smooth projective threefold are connected by a finite number of Thaddeus-flips. In this section, we will extend this structure result to base manifolds of arbitrary dimensions. However, for this we have to restrict to general classes in $N_{1}(X)_{\mathbb R}$, which we now make precise.

\begin{definition}\label{def:generalpolarisation}
  We say that $L\in \Amp(X)_{\mathbb R}$ is \emph{general} if there exists an open relatively compact $K\subset \Amp(X)_{\mathbb R}$ with $L\in \Cone(K)$ such that $L$ does not lie on any of the walls in $\tilde{\mathcal W}_K$.
\end{definition}

As is clear from the definition, choosing such a $K$, the set of points in $\Cone(K)$ that are general are dense in $\Cone(K)$.  In fact they consist of the complement of a finite number of algebraic varieties, from which one sees that in fact the points $\Amp(X)_{\Q}$ that are general are dense in $\Amp(X)_{\R}$.   Observe finally that if $L$ is general then so is $L^r$ for all $r\in \mathbb R_{>0}$.

Now let $K\subset \Amp(X)_{\R}$ be open and relatively compact, and recall the sets $\mathcal S'_{K}$ and $\mathcal S_{K}$ were defined in \eqref{eq:mathcalS'K} and \eqref{eq:mathcalSK}, respectively.

\begin{lemma}\label{lem:mathcalSrelevance}
Let $\sigma' = (\underline{L}',\sigma'_1,\ldots,\sigma'_{j_0})$ and $\sigma'' = (\underline{L}'',\sigma_1'',\ldots,\sigma_{j_0}'')$ be stability parameters.  Set
$$ \gamma' := \sum\nolimits_j \sigma'_j c_1(L_j')^{d-1} \text{ and } \gamma'' := \sum\nolimits_j \sigma''_j c_1(L_j'')^{d-1}$$
and suppose that $\gamma'$ and $\gamma''$ both lie in $p(\Cone(K))$.  Suppose in addition that for any torsion-free sheaf $E$ of topological type $\tau$ and any saturated $F\subset E$ with $F\in \mathcal S_K$ it holds that
$$p_F^{\sigma'}(\le) p_E^{\sigma'} \text{ if and only if } p_F^{\sigma''}(\le) p_E^{\sigma''}$$
Then, such a sheaf $E$ is (semi)stable with respect to $\sigma'$ if and only if it is (semi)stable with respect to $\sigma''$.
\end{lemma}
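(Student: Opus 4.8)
The plan is to reduce the stability comparison to the finite set of subsheaves lying in $\mathcal S_K$, for which the hypothesis directly applies. The key observation is that to test (semi)stability of a torsion-free $E$ of topological type $\tau$ with respect to a parameter $\sigma$ whose associated curve class $\gamma = \sum_j \sigma_j c_1(L_j)^{d-1}$ lies in $p(\Cone(K))$, it suffices to consider only those saturated proper subsheaves $F \subset E$ that could possibly destabilise. First I would note that passing to saturations does not change the reduced multi-Hilbert polynomial inequality in the relevant direction (since $F$ and its saturation have the same rank and the saturation has reduced multi-Hilbert polynomial at least that of $F$ in the appropriate sense), so we may restrict attention to saturated subsheaves. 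Next, a saturated subsheaf $F$ with $p_F^{\sigma'} \geq p_E^{\sigma'}$ (or the analogous destabilising inequality for stability) necessarily satisfies $\hat\mu^{\sigma'}(F) \geq \hat\mu^{\sigma'}(E)$, which by Remark \ref{rem:RRochIII} translates into $\mu_{\gamma'}(F) \geq \mu_{\gamma'}(E)$; since $\gamma' \in p(\Cone(K))$, this places $F$ into $\mathcal S_K$ (after first noting $E \in \mathcal S_K'$, because $E$ being $\sigma'$-semistable is $\gamma'$-slope-semistable, again by Remark \ref{rem:RRochIII}).

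The argument then runs as follows. Suppose $E$ is (semi)stable with respect to $\sigma'$ but not with respect to $\sigma''$; I would derive a contradiction. Non-(semi)stability with respect to $\sigma''$ produces a saturated proper $F \subset E$ violating the inequality $p_F^{\sigma''} (\le) p_E^{\sigma''}$. By the discussion above, $E \in \mathcal S_K'$ (using $\sigma''$-semistability implies $\gamma''$-slope-semistability, or alternatively that $E$ is already $\sigma'$-semistable hence $\gamma'$-slope-semistable — either $\gamma$ works since both lie in $p(\Cone(K))$) and the destabilising $F$ satisfies $\mu^{L}(F) \geq \mu^L(E)$ for the relevant $L$ with $p(L) = \gamma''$, hence $F \in \mathcal S_K$. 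Now apply the hypothesis: since $p_F^{\sigma''}$ fails the desired inequality against $p_E^{\sigma''}$, the same failure holds for $\sigma'$, i.e. $p_F^{\sigma'}$ violates $(\le) p_E^{\sigma'}$, contradicting (semi)stability of $E$ with respect to $\sigma'$. By symmetry the converse holds, completing the proof.

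The main obstacle I anticipate is the bookkeeping around saturations and the precise matching of the "destabilising subsheaf" condition with membership in $\mathcal S_K$ — in particular, verifying that the slope inequality $\mu^L(F) \ge \mu^L(E)$ follows from the reduced multi-Hilbert polynomial inequality, which uses that the first nontrivial coefficient of the reduced multi-Hilbert polynomial is governed by $\hat\mu^\sigma$ (Remark \ref{rem:RRochIII}), and that this in turn is an affine increasing function of $\mu_\gamma$. One must also be careful about the stable versus semistable cases simultaneously and about whether $E$ itself lies in $\mathcal S_K'$: the cleanest route is to observe that whichever of $\sigma'$, $\sigma''$ we are assuming semistability for, that semistability forces $\gamma$-slope-semistability for the corresponding $\gamma \in p(\Cone(K))$, so $E \in \mathcal S_K'$ unconditionally in the relevant direction of the argument. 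Everything else is a routine application of the hypothesis to the now-finite collection of topological types of subsheaves in $\mathcal S_K$.
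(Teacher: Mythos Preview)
Your proposal is correct and follows essentially the same approach as the paper's proof: assume $E$ is (semi)stable for $\sigma'$, observe this makes $E$ slope-semistable for $\gamma'\in p(\Cone(K))$ hence $E\in\mathcal S'_K$, then note that any saturated $F\subset E$ potentially destabilising for $\sigma''$ must satisfy $\mu_{\gamma''}(F)\ge\mu_{\gamma''}(E)$ (via Remark~\ref{rem:RRochIII}), forcing $F\in\mathcal S_K$, at which point the hypothesis applies. The paper phrases this directly rather than by contradiction, but the content is identical.
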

\begin{proof}
Suppose $E$ is semistable with respect to $\sigma'$.  Then, it is slope semistable with respect to $\gamma'$ which says $E\in \mathcal S'_K$.  To test for (semi)stability of $E$ with respect to $\sigma''$ it is sufficient to consider only saturated subsheaves $F$ of $E$.   If $\mu_{\gamma''}(F) <\mu_{\gamma''}(E)$, then clearly $F$ does not destabilise $E$ with respect to $\sigma''$ (cf.~ \Iref{Example}{ex:RRochII}, or Remark \ref{rem:RRochIII}).  Otherwise $\mu_{\gamma''}(F)\ge \mu_{\gamma''}(E)$  and so $F\in \mathcal S_K$ and we are done.
\end{proof}

\begin{proposition}\label{prop:keymultigeiseker}
Let $L',L''\in \Amp(X)$ and suppose that $L'$ is general.   Consider the family of stability parameters
\begin{equation}
\sigma(t) := \left(L',L'',\frac{1-t}{\vol(L')},\frac{t}{\vol(L'')}\right)\text{ for }t\in [0,1].\label{eq:segment}
\end{equation}
Then, for all $t>0$ sufficiently small
\begin{enumerate}
\item Any sheaf of topological type $\tau$ is Gieseker-(semi)stable with respect to $L'$ if and only if it is $\sigma(t)$-(semi)stable
\item Suppose $E$ and $E'$ are sheaves of topological type $\tau$ that are Gieseker-semistable with respect to $L'$. Then, $E$ and $E'$ are $S$-equivalent in terms of Gieseker-stability defined using $L'$ if and only if they are $S$-equivalent in terms of multi-Gieseker-stability defined using $\sigma(t)$.
\end{enumerate}
Thus, for $t>0$ sufficiently small the stability parameter $\sigma(t)$ is bounded, and $\mathcal M_{L'} = \mathcal M_{\sigma(t)}$.  
\end{proposition}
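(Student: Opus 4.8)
The plan is to exploit the fact that $L'$ is general, which by Definition~\ref{def:generalpolarisation} means there is an open relatively compact $K \subset \Amp(X)_{\mathbb R}$ with $L' \in \Cone(K)$ and $L'$ on none of the walls in $\tilde{\mathcal W}_K$. First I would arrange, by shrinking or enlarging $K$ if necessary, that the curve class $\gamma(t) := \sum_j \sigma_j(t) c_1(L_j')^{d-1}$ associated to the segment \eqref{eq:segment} — here $\underline L' = (L', L'')$ — stays inside $p(\Cone(K))$ for all sufficiently small $t \ge 0$; this is possible because $\gamma(0) = c_1(L')^{d-1}/\vol(L') = p(L')/\vol(L') \in p(\Cone(K))$ and $\gamma(t)$ depends continuously on $t$, and $p(\Cone(K))$ is open in $\Pos(X)_{\mathbb R}$. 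Having fixed such a $K$, the sets $\mathcal S'_K$ and $\mathcal S_K$ of \eqref{eq:mathcalS'K} and \eqref{eq:mathcalSK} are bounded, hence there are only finitely many topological types of saturated subsheaves $F \in \mathcal S_K$ to consider.

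The key step is then a finiteness-plus-continuity argument to match Gieseker-stability at $L'$ with $\sigma(t)$-stability for $t$ small, via Lemma~\ref{lem:mathcalSrelevance}. For each of the finitely many topological types of $F \in \mathcal S_K$ (and each fixed $E$-type $\tau$), the difference of reduced multi-Hilbert polynomials $p_F^{\sigma(t)} - p_E^{\sigma(t)}$ is a polynomial in $k$ whose coefficients depend on $t$, and at $t = 0$ it specialises to $p_F^{L'} - p_E^{L'}$ (using the Riemann--Roch description of the reduced Hilbert polynomial for real classes, as in Remark~\ref{rem:RRochIII}). Since $L'$ lies on none of the walls $\tilde W_{F,i}$, for each such $F$ the leading non-zero coefficient of $p_F^{L'} - p_E^{L'}$ is strictly non-zero and thus determines whether $p_F^{L'} \le p_E^{L'}$ (resp.\ $<$, resp.\ $>$) by a strict inequality in that coefficient. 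By continuity of the finitely many coefficient-functions in $t$, these strict inequalities persist for all $t$ in some interval $[0, \varepsilon]$; hence for $t \in (0,\varepsilon]$ we have $p_F^{\sigma(t)} (\le) p_E^{\sigma(t)}$ if and only if $p_F^{L'}(\le)p_E^{L'}$, for every $F \in \mathcal S_K$. Applying Lemma~\ref{lem:mathcalSrelevance} with $\sigma' := (\underline L', \tfrac{1}{\vol(L')}, 0)$ (i.e.\ ordinary Gieseker-stability with respect to $L'$, noting $\gamma' = p(L')/\vol(L') \in p(\Cone(K))$) and $\sigma'' := \sigma(t)$ gives part (1). For part (2), the same persistence of strict coefficient inequalities shows that for a Gieseker-semistable $E$, the destabilising-or-not status of each relevant subsheaf — and hence the Jordan--Hölder filtration and its graded object — is unchanged as one passes from $L'$-Gieseker-stability to $\sigma(t)$-stability, giving equality of $S$-equivalence classes; one only needs to additionally observe that the subsheaves appearing in a Jordan--Hölder filtration of an $E \in \mathcal S'_K$ are themselves in $\mathcal S_K$ (saturated, equal slope).

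Finally, boundedness of $\sigma(t)$ for small $t > 0$ follows because, by part (1), the set of $\sigma(t)$-semistable sheaves of type $\tau$ equals the set of $L'$-Gieseker-semistable ones, which is bounded; and the identification $\mathcal M_{L'} = \mathcal M_{\sigma(t)}$ of coarse moduli spaces then follows from parts (1) and (2) together with the fact that both moduli spaces corepresent the same moduli functor (equal semistable objects, equal $S$-equivalence), using the construction of $\mathcal M_\sigma$ recalled in Section~\ref{sec:twisted}. The main obstacle I anticipate is the bookkeeping in the first paragraph: ensuring a single $K$ works, i.e.\ that $\gamma(t)$ remains in $p(\Cone(K))$ and that $L'$ avoiding the walls $\tilde{\mathcal W}_K$ genuinely controls all the relevant subsheaves $F$ — in particular checking that the subsheaves destabilising $E$ with respect to $\sigma(t)$ satisfy $\mu_{\gamma(t)}(F) \ge \mu_{\gamma(t)}(E)$ and hence land in $\mathcal S_K$, which is exactly what Lemma~\ref{lem:mathcalSrelevance} is designed to handle but which must be invoked with the slopes for the varying class $\gamma(t)$ rather than a fixed one.
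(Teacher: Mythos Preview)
Your overall strategy matches the paper's: fix $K$ witnessing genericity of $L'$, reduce via Lemma~\ref{lem:mathcalSrelevance} to comparing $p_F^{\sigma(0)}$ with $p_F^{\sigma(t)}$ for the finitely many types of $F\in\mathcal S_K$, and conclude by a smallness argument in $t$. However, there is a genuine gap in your ``continuity'' step. You argue that the leading non-zero coefficient of $p_F^{L'}-p_E^{L'}$ is strictly non-zero and that this strict inequality persists for small $t$. But lexicographic order is not continuous: you must also control the coefficients \emph{above} the leading non-zero one, which vanish at $t=0$. If some $h_{F,i}(0)=0$ but $h_{F,i}(t)\neq 0$ for arbitrarily small $t>0$, the sign of $p_F^{\sigma(t)}-p_E^{\sigma(t)}$ could flip regardless of what happens to lower coefficients. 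Your invocation of genericity (``$L'$ lies on none of the walls $\tilde W_{F,i}$'') is aimed at the wrong place: it tells you nothing new about the leading non-zero coefficient (non-zero by definition), and you never use it to constrain the vanishing ones.

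The paper closes exactly this gap with a two-step argument you are missing. First, Riemann--Roch shows that each coefficient function $t\mapsto h_{F,i}(t)$ is \emph{linear} in $t$, with $h_{F,i}(0)$ a positive multiple of $\beta_{F,i}^{L'}$ and $h_{F,i}(1)$ a positive multiple of $\beta_{F,i}^{L''}$. Second, genericity is used as follows: if $h_{F,i}(0)=0$ then $L'\in\tilde W_{F,i}$, so this wall is trivial (all of $\Amp(X)_{\mathbb R}$), hence $L''\in\tilde W_{F,i}$ as well, i.e.\ $h_{F,i}(1)=0$; linearity then forces $h_{F,i}\equiv 0$ on $[0,1]$. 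Thus each coefficient is either identically zero or non-vanishing at $0$, and only with this dichotomy does the lexicographic comparison survive for small $t$. The same point handles the equality case $p_F^{L'}=p_E^{L'}$ needed for $S$-equivalence in part~(2): one must know $p_F^{\sigma(t)}=p_E^{\sigma(t)}$ for small $t$, and this follows from $h_{F,i}\equiv 0$ for all $i$, not from continuity.
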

\begin{proof}
As $L'$ is general, there exists an open relatively compact $K\subset \Amp(X)_{\mathbb R}$ with $L'\in \Cone(K)$ and $L'$ not lying on any wall in $\tilde{\mathcal W}_K$.

Let $E$ be a sheaf of topological type $\tau$.   If $F\subset E$ is saturated with $F\in \mathcal S_K$ we write the difference of reduced multi-Hilbert-polynomials as
$$p_F^{\sigma(t)}(k) - p_E^{\sigma(t)}(k) = \sum_{i=1}^{d} h_{F,i}(t) \frac{k^{d-i}}{(d-i)!}.$$
Clearly $h_{F,i}$ depends only on the topological type of $F$.  Moreover, since for any sheaf $F$ we have $p_F^{\sigma(0)}(k) = \frac{1}{\rank(F)\vol(L')} \chi(F\otimes L'^k) = p^{L'}_F(k)$, it holds that $h_{F,i}(0)$ is a positive multiple of $\beta_{F,i}^{L'}$ (as defined in \eqref{eq:definitionbeta}), and similarly $h_{F,i}(1)$ and $\beta_{F,i}^{L''}$.   In fact, using the Riemann-Roch theorem (cf.\ \Iref{Example}{ex:RRochII} or Remark \ref{rem:RRochIII}) one checks easily that $h_{F,i}\colon [0,1]\to \mathbb R$ is linear.  

We claim that for each $i=1,\ldots,d$ and $F\in \mathcal S_K$ either $h_{F,i}(t)=0$ for all $t\in [0,1]$, or $h_{F,i}(0)\neq 0$.  This is clear when $i=d$ for then $h_{F,i}$ is independent of $t$, so assume $1\le i\le d-1$ and $h_{F,i}(0)=0$.  So $\beta_{F,i}^{L'}=0$, so $L'$ lies on the wall $\tilde{W}_{F,i}$.  So as $L'$ is general this implies that $\tilde{W}_{F,i}$ is trivial, i.e. is the whole of $\Amp(X)_{\mathbb R}$.  In particular $L''\in \tilde{W}_{F,i}$, and thus $h_{F,i}(1) =0$ as well, and so by linearity $h_{F,i}(t) =0$ for all $t\in [0,1]$, thus proving the claim.

Now, as there are only a finite number of topological types among the sheaves in $\mathcal S_K$, for all $t_0>0$ sufficiently small the following holds: for all $F\in \mathcal S_K$ and all $i=1,\ldots,d$ either
\begin{equation}\label{eq:alternative}
h_{F,i}\text{ has no roots in }[0,t_0]\text{ or  }h_{F,i}(t)=0\text{ for all }t\in [0,1].
\end{equation}

We now prove (1) by showing that $E$ is (semi)stable with respect to $\sigma(0)$ (which is equivalent to being Gieseker-(semi)stable with respect to $L'$) if and only if it is (semi)stable with respect to $\sigma(t_0)$.  To this end, let 
$$ \gamma_t = \frac{1-t}{\vol(L')} c_1(L')^{d-1} + \frac{t}{\vol(L'')} c_1(L'')^{d-1} \in N_1(X)_\mathbb{R} .$$
As $L'\in \Cone(K)$, we have $\gamma_{0}\in p(\Cone(K))$ and by continuity, by shrinking $t_{0}$ if necessary, we have $\gamma_{t_{0}}\in p(\Cone(K))$ as well.   So we are in a position to apply Lemma \ref{lem:mathcalSrelevance}, so it is sufficient to prove that for all saturated $F\subset E$ with $F\in \mathcal S_K$ we have
$$ p_F^{\sigma(0)} (\le) p_E^{\sigma(0)} \text{ if and only if }  p_F^{\sigma(t_0)} (\le) p_E^{\sigma(t_0)}.$$
But precisely from the ordering of polynomials (which is the same as the lexicographic ordering on its coefficients) this follows immediately from \eqref{eq:alternative}, proving statement (1) in the theorem.  The statement (2) about $S$-equivalence then follows from this.

Finally, as is well known, the set of sheaves of a given topological type that are semistable with respect to $L'$ is bounded, by \cite[3.3.7]{Bible}, or  \Iref{Theorem}{thm:positiveimpliesstronglybounded}.  Thus, for $t>0$ sufficiently small, $\sigma(t)$ is also bounded, and the final statement follows.
\end{proof}

\begin{theorem}[Mumford-Thaddeus principle for Gieseker moduli spaces]\label{thm:MTprincipleGieseker}
Let $X$ be a projective manifold of dimension $d$ over an algebraically closed field of characteristic zero, let $\tau \in B(X)_\mathbb{Q}$, and let  $L', L''\in \Amp(X)_{\mathbb Q}$ be general. Then, the moduli spaces $\mathcal{M}_{L'}$ and $\mathcal{M}_{L''}$ of torsion-free sheaves of topological type $\tau$ that are Gieseker-semistable with respect to $L'$ and $L''$, respectively, are related by a finite number of Thaddeus-flips.
\end{theorem}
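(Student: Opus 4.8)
\emph{Overview.} The plan is to reduce the statement to the variation-of-GIT picture already available for multi-Gieseker stability in \cite{GRTI}, using Proposition~\ref{prop:keymultigeiseker} to realise ordinary Gieseker stability with respect to a \emph{general} polarisation as multi-Gieseker stability over a \emph{fixed} quiver. After clearing denominators one may assume $L',L''\in\Amp(X)$ are integral, since both Gieseker stability and the property of being general are unchanged under replacing a polarisation by a positive integral multiple.

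\emph{Step 1: pass to multi-Gieseker parameters.} For $t\in[0,1]$ put $\sigma(t):=\bigl(L',L'',\tfrac{1-t}{\vol(L')},\tfrac{t}{\vol(L'')}\bigr)$, as in \eqref{eq:segment}. Since $L'$ is general, Proposition~\ref{prop:keymultigeiseker} furnishes $t_-\in(0,1)$ close to $0$ such that the stability parameter $\sigma':=\sigma(t_-)$ is bounded and $\mathcal M_{L'}=\mathcal M_{\sigma'}$. Applying the same proposition with the roles of $L'$ and $L''$ interchanged — permissible since $L''$ is general too — and then relabelling the two polarisation slots (which does not affect the stability notion, the multi-Hilbert polynomial being symmetric in them), I obtain $t_+\in(0,1)$ close to $1$ with $\sigma'':=\sigma(t_+)$ bounded and $\mathcal M_{L''}=\mathcal M_{\sigma''}$. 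Crucially $\sigma'$ and $\sigma''$ are bounded multi-Gieseker parameters with the \emph{same} underlying vector $\underline L=(L',L'')$ of ample bundles and trivial twists, so it remains to connect $\mathcal M_{\sigma'}$ and $\mathcal M_{\sigma''}$ by finitely many Thaddeus-flips.

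\emph{Step 2: variation of GIT for the pair $\{\sigma',\sigma''\}$.} This is precisely the finite-parameter situation of the variation section of \cite{GRTI} recalled in the proof of Theorem~\ref{thm:intermediate}. As $\{\sigma',\sigma''\}$ is a finite set of bounded parameters, the sheaves of type $\tau$ that are $\sigma'$- or $\sigma''$-semistable form a bounded family, so I may fix $m\gg n\gg p\gg 0$ for which the Comparison of semistability and Jordan--Hölder filtrations, \Iref{Theorem}{thm:mainsemistabilitycomparison}, holds simultaneously at $\sigma'$ and at $\sigma''$. Inside the associated quiver-representation space form $Y:=Q^{[\sigma'\text{-}ss]}\cup Q^{[\sigma''\text{-}ss]}$ and its closure $Z:=\overline Y$; the master-space statement \Iref{Theorem}{thm:masterspace} then gives $Z^{\sigma'}=Q^{[\sigma'\text{-}ss]}$ and $Z^{\sigma''}=Q^{[\sigma''\text{-}ss]}$, whence $\mathcal M_{\sigma'}=Z/\!\!/_{\theta_{\sigma'}}G$ and $\mathcal M_{\sigma''}=Z/\!\!/_{\theta_{\sigma''}}G$ for the two characters attached to $\sigma'$ and $\sigma''$. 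Finally \Iref{Corollary}{cor:masterspace} produces a cone $\mathcal C_G(Z)$ of $G$-linearisations of $Z$ carrying a finite chamber decomposition and containing both $\theta_{\sigma'}$ and $\theta_{\sigma''}$; joining these two characters by a path inside $\mathcal C_G(Z)$ and invoking the variation-of-GIT theorems of \cite{Thaddeus,DolgachevHu} exhibits $\mathcal M_{\sigma'}$ and $\mathcal M_{\sigma''}$ — hence $\mathcal M_{L'}$ and $\mathcal M_{L''}$ — as linked by the finitely many Thaddeus-flips witnessing the intervening wall-crossings, which completes the proof.

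\emph{Main difficulty.} There is no heavy computation; the real content lies in Proposition~\ref{prop:keymultigeiseker} — and behind it the chamber structure of Proposition~\ref{prop:chamberstructuregieseker} together with the genericity of $L',L''$ — which is what lets one trade a pair of ample classes for a pair of multi-Gieseker parameters over a single fixed quiver, so that the GIT machinery of \cite{GRTI} applies unchanged. The point to be careful about, and the reason the conclusion is weaker than that of Theorems~\ref{thm:2intermediate} and \ref{thm:3intermediate}, is that no uniformity is imposed on the segment joining $\sigma'$ to $\sigma''$: the wall structure in the space of characters need not pull back to a linear wall structure among stability parameters (cf.\ Remark~\ref{remark:chambers}), so the intermediate GIT quotients along the chosen path in $\mathcal C_G(Z)$ are not known to be moduli spaces of sheaves, and accordingly the theorem asserts only the existence of Thaddeus-flips rather than of Thaddeus-flips through moduli of sheaves.
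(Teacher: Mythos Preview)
Your proof is correct and follows essentially the same approach as the paper's: define the segment $\sigma(t)=\bigl(L',L'',\tfrac{1-t}{\vol(L')},\tfrac{t}{\vol(L'')}\bigr)$, apply Proposition~\ref{prop:keymultigeiseker} at each end to identify $\mathcal M_{L'}$ and $\mathcal M_{L''}$ with $\mathcal M_{\sigma(t_-)}$ and $\mathcal M_{\sigma(t_+)}$ for positive parameters $\sigma(t_-),\sigma(t_+)$, and then invoke the master-space machinery \Iref{Theorem}{thm:masterspace} and \Iref{Corollary}{cor:masterspace} from \cite{GRTI}. Your write-up is more explicit about the GIT step and about why the intermediate quotients need not be moduli of sheaves, but the strategy is the same.
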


\begin{remark}
  The obvious approach to prove such a statement is to first scale $L'$
  and $L''$ so they are integral, and then consider the two stability
  parameters
  \begin{equation}\label{eq:obviousstabilityparameters}
    \sigma = (L', L'',1,0) \text{ and } \sigma' = (L', L'', 0,1).
  \end{equation}
  Then, Gieseker-(semi)stability with respect to $L'$ (resp. $L''$) is
  precisely multi-Gieseker-(semi)\-stability with respect to $\sigma'$
  (resp.\ $\sigma''$).  Ideally, we would then apply our main variation
  result (\!\!\!\Iref{Corollary}{cor:masterspace}); however, we are prevented
  from doing this since the stability parameters in
  \eqref{eq:obviousstabilityparameters} are clearly not positive (in the sense of \Iref{Definition}{def:stabilityparameter}).  In
  a sequel to this paper we will address this issue, and show how this
  direct naive approach can be indeed carried out to prove that the moduli space of Gieseker-semistable
  sheaves with respect to any two polarisations are related by a
  finite number of Thaddeus-flips.  For now, we use what we have
  already to prove the above weaker version of this statement.
\end{remark}

\begin{proof}[Proof of Theorem \ref{thm:MTprincipleGieseker}]
Consider the segment of stability parameters given by
$$ \sigma(t) = (L',L'',\frac{1-t}{\vol(L')},\frac{t}{\vol(L'')}) \text{ for } t\in [0,1].$$

As $L'$ and $L''$ are general, two applications of Proposition \ref{prop:keymultigeiseker} imply that for $t>0$ sufficiently small $\mathcal M_{L'}  = \mathcal M_{\sigma(t)}$ and $\mathcal M_{L''} = \mathcal M_{\sigma(1-t)}$.    Moreover, as $t\in (0,1)$, both $\sigma(t)$ and $\sigma(1-t)$ are clearly positive stability parameters, and so \Iref{Theorem}{thm:masterspace} applies to give the result.
\end{proof}

\section{Gieseker-stability with respect to real ample classes}

We digress to revisit the moduli spaces $\mathcal M_{\omega}$ of Gieseker-semistable sheaves taken with respect to a real class $\omega\in \Amp(X)_{\mathbb R}$.  Suppose we again fix an open, relatively compact $K\subset \Amp(X)_{\mathbb R}$ and assume that $\omega \in \Cone(K)$.  If $\omega$ is general, so does not lie on any of the walls in $\tilde{\mathcal W}_K$, then we can perturb it slightly to find a rational $L\in \Amp(X)_{\mathbb Q}$ in the same chamber.  Thus, Gieseker-stability with respect to $\omega$ is the same as that for $L$, and so the moduli spaces $\mathcal M_{\omega} = \mathcal M_L$ agree, and so in particular $\mathcal M_{\omega}$ is projective.  

We now show that we can still get such a projective moduli space if $\omega$ is allowed to lie on walls of the form $\tilde{W}_{1,F}$ for $F\in \mathcal S_K$, but not on any walls of the form $\tilde{W}_{i,F}$ for $F\in \mathcal S_K$ and $i\ge 2$.

\begin{theorem}[Projective moduli spaces for $\omega$-semistable sheaves]\label{thm:kaehlermoduli:revisited}
Let $X$ be a smooth projective manifold of dimension $d$ and $\tau \in B(X)_{\mathbb Q}$.  Suppose that $K\subset \Amp(X)_{\mathbb R}$ is open and relatively compact and that $\omega\in \Cone(K)$ does not lie on any of the walls $\tilde{W}_{i,F}$ for $i\ge 2$ and $F\in \mathcal S_K$. 

Then, there exists a projective moduli space $\mathcal M_{\omega}$ of torsion-free sheaves of topological type $\tau$ that are Gieseker-semistable with respect to $\omega$.  This moduli space contains an open set consisting of points representing isomorphism classes of stable sheaves, while points on the boundary correspond to $S$-equivalence classes of strictly semistable sheaves.  
\end{theorem}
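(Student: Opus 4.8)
The plan is to realise Gieseker‑$\omega$‑semistability as multi‑Gieseker‑$\sigma$‑semistability for a suitable \emph{bounded rational} stability parameter $\sigma=(\underline L,\underline\sigma)$ (no twisting is needed), and then to set $\mathcal M_\omega:=\mathcal M_\sigma$, the latter being projective by the existence theorem recalled in Section~\ref{sec:twisted}. Since $\omega\in\Cone(K)$ we have $\omega^{d-1}=p(\omega)\in p(\Cone(K))$, so every $\omega$‑semistable sheaf of type $\tau$ is slope semistable with respect to $p(\omega)$ and hence lies in the bounded set $\mathcal S'_K$ of \eqref{eq:mathcalS'K}; consequently only saturated subsheaves $F$ with $F\in\mathcal S_K$ matter when testing $\omega$‑stability, and by Lemma~\ref{lem:grothendicklemmavariant} these fall into finitely many topological types. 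Arguing as in Lemma~\ref{lem:mathcalSrelevance} and Proposition~\ref{prop:chamberstructuregieseker} — whose proofs apply verbatim with a real class in place of $\sigma'$, the reduced Hilbert polynomials being defined through Riemann--Roch as in \eqref{eq:definitionbeta} — it then suffices to choose $\underline L$ and $\underline\sigma$ so that (i) $\gamma_1:=\sum_j\sigma_j c_1(L_j)^{d-1}\in p(\Cone(K))$, and (ii) for every $F\in\mathcal S_K$ and every $E$ of type $\tau$ the lexicographic comparison of $p_F^\sigma$ with $p_E^\sigma$ agrees with that of $p_F^\omega$ with $p_E^\omega$. Boundedness of $\sigma$ will then follow from Remark~\ref{rem:RRochIII} together with (i), and the assertions on the stable locus and on $S$‑equivalence will be inherited from $\mathcal M_\sigma$.

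To arrange (ii), I would write $\delta_i(F)$ for the degree‑$2i$ component of $\ch(F)\Todd(X)/\rank F-\ch(E)\Todd(X)/\rank E$, so that by Riemann--Roch the $i$‑th entry of the comparison vector equals, up to a positive constant, $\int_X\omega^{d-i}\,\delta_i(F)$ in the $\omega$‑case and $\int_X\bigl(\sum_j\sigma_j c_1(L_j)^{d-i}\bigr)\delta_i(F)$ in the $\sigma$‑case; the entries with $i=d$ always have the same sign, so only $i=1,\dots,d-1$ need attention. The role of the hypothesis is to force, for $i\ge2$ and $F\in\mathcal S_K$, the dichotomy: either $\int_X\omega^{d-i}\delta_i(F)\ne0$, or the wall $\tilde W_{i,F}$ is all of $\Amp(X)_{\R}$ and $\int_X M^{d-i}\delta_i(F)=0$ for \emph{every} ample $M$. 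Hence, choosing all the $L_j$ close enough to $\omega$, for each $i\ge2$ and each of the finitely many $F$ the number $\int_X(\sum_j\sigma_j c_1(L_j)^{d-i})\delta_i(F)$ has the same sign as $\int_X\omega^{d-i}\delta_i(F)$ when the latter is nonzero (continuity), and equals it — namely $0$ — otherwise, being a positive combination of the vanishing terms $\int_X c_1(L_j)^{d-i}\delta_i(F)$. The index $i=1$ is the only one at which $\omega$ may lie on a wall of $\tilde{\mathcal W}_K$, and I would accommodate this by demanding that $\gamma_1$ lie in the \emph{rational} linear subspace
\[
V:=\Bigl\{\gamma\in N_1(X)_{\R}\ \Big|\ \textstyle\int_X\gamma\,\delta_1(F)=0\text{ for all }F\in\mathcal S_K\text{ with }\int_X\omega^{d-1}\delta_1(F)=0\Bigr\}\ni p(\omega);
\]
together with $\gamma_1$ being close to $p(\omega)$ this makes the $i=1$ comparisons for $\sigma$ agree with those for $\omega$ throughout $\mathcal S_K$.

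It remains to produce rational ample $L_1,\dots,L_{j_0}$ near $\omega$ and rational $\sigma_j>0$ with $\gamma_1:=\sum_j\sigma_j c_1(L_j)^{d-1}$ both lying in $V$ and close to $p(\omega)$. Because $p\colon\Amp(X)_{\R}\to\Pos(X)_{\R}$ is a homeomorphism and $\dim N^1(X)_{\R}=\dim N_1(X)_{\R}$, invariance of domain makes $\Pos(X)_{\R}$ open in $N_1(X)_{\R}$; thus $V\cap\Pos(X)_{\R}$ is relatively open and nonempty in the rational subspace $V$ and so contains rational points arbitrarily close to $p(\omega)$. A Steinitz--Carath\'eodory argument — the powers $c_1(M)^{d-1}$ with $M\in\Amp(X)_{\Q}$ near $\omega$ being dense near $p(\omega)$ in the open set $\Pos(X)_{\R}$ — then lets me pick finitely many rational ample $L_1,\dots,L_{j_0}$ near $\omega$ whose $(d-1)$‑st powers generate an open polyhedral cone containing $p(\omega)$, hence also containing a sufficiently close rational $\gamma_1\in V$; solving the resulting rational linear system yields rational $\sigma_j>0$ (positivity being an open condition satisfied by the real solution that $\omega$ itself provides) with $\sum_j\sigma_j c_1(L_j)^{d-1}=\gamma_1$. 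Shrinking the neighbourhood of $\omega$ so that the finitely many sign comparisons above hold and so that $\gamma_1\in p(\Cone(K))$ finishes the construction, and with it the proof.

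I expect the main obstacle to be exactly this last step: pinning $\gamma_1$ onto the rational linear subspace $V$ while keeping it realisable as a positive rational combination of $(d-1)$‑st powers of rational ample classes close to $\omega$ — that is, reconciling the linear constraint $\gamma_1\in V$ with the non‑linear geometry of $\Pos(X)_{\R}$. The Riemann--Roch bookkeeping for the $\delta_i(F)$, the reduction to $\mathcal S_K$, and the transfer of the $S$‑equivalence and stable‑locus statements from $\mathcal M_\sigma$ should all be routine given Remark~\ref{rem:RRochIII}, Lemma~\ref{lem:mathcalSrelevance} and Proposition~\ref{prop:chamberstructuregieseker}. It is precisely the assumption that $\omega$ avoids every wall $\tilde W_{i,F}$ with $i\ge2$ that keeps this a \emph{single} linear condition: were $\omega$ to sit on such a higher wall, one would have to impose linear conditions simultaneously on several of the partial power sums $\sum_j\sigma_j c_1(L_j)^{d-i}$, which are coupled non‑linearly through the common data $(\underline L,\underline\sigma)$, and it is unclear these could all be met by a rational parameter.
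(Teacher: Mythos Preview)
Your approach is correct and follows the same overall strategy as the paper --- realise $\omega$-Gieseker stability as multi-Gieseker $\sigma$-stability for some bounded rational $\sigma$, then set $\mathcal M_\omega:=\mathcal M_\sigma$ --- but the paper's execution is considerably simpler at precisely the point you flag as the main obstacle.

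The paper does not try to produce a rational parameter directly. Instead it picks rational ample $L_1,\dots,L_{j_0}$ close to $\omega$ whose $(d-1)$-st powers have $\omega^{d-1}$ in their convex hull, and takes (possibly \emph{irrational}) $\sigma_j\ge0$ with
\[
\sum\nolimits_j\sigma_j\,c_1(L_j)^{d-1}=\omega^{d-1}
\]
\emph{exactly}. This disposes of the $i=1$ problem outright: the first comparison coefficient for $\sigma$ is $\int_X\hilb_1(F,E)\cdot\sum_j\sigma_j c_1(L_j)^{d-1}=\int_X\hilb_1(F,E)\cdot\omega^{d-1}$, literally equal to the $\omega$-coefficient. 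For $i\ge2$ the argument is yours: the hypothesis forces either $\beta_{F,i}^\omega\ne0$ or $\tilde W_{i,F}=\Amp(X)_{\R}$, so by proximity each $\beta_{F,i}^{L_j}$ has the same sign as $\beta_{F,i}^\omega$ (or all vanish), and since the $i$-th $\sigma$-coefficient is the positive combination $\sum_j\sigma_j\beta_{F,i}^{L_j}$, it inherits that sign by convexity. Only \emph{after} establishing that $\omega$-(semi)stability coincides with $\sigma$-(semi)stability does the paper invoke the rational-linear multi-Gieseker chamber structure (\Iref{Proposition}{prop:weakchamberstructure}) to perturb the $\sigma_j$ to rationals without changing stability.

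So your subspace $V$, the invariance-of-domain argument, and the Steinitz--Carath\'eodory manoeuvre to place a rational $\gamma_1$ on $V\cap p(\Cone(K))$ are all avoidable: by hitting $\omega^{d-1}$ on the nose with real coefficients and deferring rationality to a separate perturbation in $\sigma$-space (where the walls are linear and rational), the paper sidesteps the non-linear coupling you worry about. Your route works, but it solves by hand a difficulty that the two-stage argument --- first match stability with real $\sigma$, then rationalise --- gets for free.
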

\begin{proof}
Clearly $\omega^{n-1}\in \Pos(X)_{\mathbb R}$.  As the map $p\colon \Amp(X)_{\mathbb R}\to \Pos(X)_{\mathbb R}$ is a homeomorphism we can find $L_1,\ldots,L_{j_0}\in \Amp(X)_{\mathbb Q}$ arbitrarily close to $\omega$, so that $\omega^{n-1}$ is a convex combination of $p(L_1),\ldots,p(L_{j_0})$,  say $\omega^{n-1} = \sum_j \sigma_j L_j^{d-1}$.   By rescaling all the $L_j$ simultaneously we may assume all of them to be integral.  We then set $\sigma = (L_1,\ldots, L_{j_0}, \sigma_1,\ldots,\sigma_{j_0})$.

As $\omega$ does not lie on any of the walls $\tilde{W}_{i,F}$ for $i\ge 2$, we can take such $L_j$ close enough to $\omega$ to ensure that $\beta_{i,F}^{L}$ has the same sign as $\beta_{i,F}^{\omega}$ for all $i\ge 2$ and $F\in \mathcal S_K$. Using convexity, one checks that this implies that $p_{F}^{\omega} (\le) p_F^{\omega}$ if and only if $p_{F}^{\sigma}(\le) p_E^{\sigma}$.  So $E$ is Gieseker-(semi)stable with respect to $\omega$ if and only if it is (semi)stable with respect to $\sigma$. Now as the wall structure for multi-Gieseker stability is rational linear \Iref{Proposition}{prop:weakchamberstructure} we can perturb the $\sigma_j$ to be rational without changing stability, and thus $\mathcal M_{\omega} = \mathcal M_{\sigma}$.
\end{proof}

\part{Construction of Uniform Stability Segments}\label{part:examples}

\section{Surfaces}\label{sec:exsurfaces}
We sketch the case of the variation problem when $X$ is a smooth complex surface.  
The upshot will be that we recover, in a natural way, the main result of Matsuki-Wentworth \cite{MatsukiWentworth}.  

\begin{theorem}[Mumford-Thaddeus-principle through moduli spaces of sheaves for surfaces]\label{thm:2intermediate}
Suppose $X$ is a smooth surface and $\tau\in B(X)_{\mathbb Q}$ and let $L,L'\in \Amp(X)$.   Then, the moduli spaces $\mathcal M_{L}$ and $\mathcal M_{L'}$ of torsion-free sheaves of topological type $\tau$ that are Gieseker-semistable with respect to $L$ and $L'$, respectively, are related by a finite number of Thaddeus-flips through moduli spaces of (twisted) semistable sheaves.
\end{theorem}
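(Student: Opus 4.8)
\emph{Strategy and reduction.} The naive candidate — the stability segment $\sigma(t)=\bigl(L,L',\tfrac{1-t}{\vol(L)},\tfrac{t}{\vol(L')}\bigr)$ interpolating the two Gieseker conditions — is \emph{not} uniform: on a surface $p^{\sigma(t)}_E(k)=\tfrac{k^2}{2}+a_1(E,t)\,k+a_0(E,t)$ and, by Remark~\ref{rem:RRochIII}, $a_1(E,t)$ varies with $t$ through the class $\gamma(t)=\tfrac{1-t}{\vol(L)}c_1(L)+\tfrac{t}{\vol(L')}c_1(L')$, so Theorem~\ref{thm:intermediate} does not apply directly. The plan is to follow the \emph{zooming} scheme of the introduction: using the chamber structure for Gieseker-stability (Proposition~\ref{prop:chamberstructuregieseker}), first join $L$ and $L'$ by a path in $\Amp(X)_{\mathbb Q}$ meeting the finitely many walls of some $\tilde{\mathcal W}_K$ one at a time and transversally; by Proposition~\ref{prop:chamberstructuregieseker} the moduli space is constant along each open sub-arc, so it suffices, for each such wall $\tilde W$ with crossing point $\bar L\in\tilde W$, to connect the two moduli spaces flanking $\bar L$ — and, if $L$ or $L'$ itself sits on a wall, to connect $\mathcal M_L$ (resp.\ $\mathcal M_{L'}$) to the neighbouring chamber. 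The surface-specific point exploited throughout is that a uniform segment is forced to have $\gamma$ \emph{constant} (only $a_1$ obstructs uniformity, and $a_1$ is determined by $\gamma$ once the normalisation $\sum_j\vol(L_j)\sigma_j=1$ is imposed), while a constant-$\gamma$ segment can nevertheless cross a wall by means of a varying \emph{twist}.

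\emph{The uniform resolving segment.} Fix one crossing $\bar L\in\tilde W$, put $\bar\gamma:=c_1(\bar L)/\vol(\bar L)$, and choose $A\in\Amp(X)_{\mathbb Q}$ with $c_1(A)$ a positive multiple of $\bar\gamma$. I would take $\eta(r)=(A;B(r))$, $r\in[0,1]$, with $B(r)=\sum_s b_s(r)B_s$ a formal $\mathbb R_{\ge 0}$-sum of line bundles, the $b_s$ linear in $r$, subject to $\sum_s b_s(r)=1/\vol(A)$ (so $\eta$ is a stability segment with $\gamma\equiv\bar\gamma$) and to $\sum_s b_s(r)\,c_1(A)\cdot c_1(B_s)$ independent of $r$ — which, with $\gamma$ constant, forces $a_1(E)$ to be $r$-independent, i.e.\ makes $\eta$ \emph{uniform}. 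It is also bounded, since constancy of $\gamma$ makes every $\eta(r)$-semistable sheaf slope-semistable with respect to the fixed class $\bar\gamma$, hence a bounded family by \Iref{Theorem}{thm:K^+}. The computation underlying the construction is that, for torsion-free $E$ of type $\tau$ and saturated $F\subset E$, writing $\xi_F:=c_1(F)-\tfrac{\rank F}{\rank E}c_1(E)$, the difference $p^{\eta(r)}_F(k)-p^{\eta(r)}_E(k)$ has $k$-coefficient a fixed positive multiple of $\bar\gamma\cdot\xi_F$ (constant in $r$, the quadratic-in-twist terms cancelling) and constant term of the form $K_F+r\,(\delta_0\cdot\xi_F)$, where $\delta_0:=\sum_s b_s'(r)\,c_1(B_s)$ is the twist velocity (necessarily $\perp\bar\gamma$) and $K_F$ is controlled by the base twist $B(0)$.

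\emph{Matching and conclusion.} The subsheaves $F$ responsible for the wall $\tilde W$ are exactly those with $\bar\gamma\cdot\xi_F=0$, and for these $\gamma'\cdot\xi_F=w\cdot\xi_F$, where $\gamma'$ is the velocity of the path at the crossing and $w$ its component orthogonal to $\bar\gamma$ for the intersection form (so $w\perp\bar\gamma$, making $\delta_0$ a large multiple of $w$ admissible). Choosing $\delta_0$ a large multiple of $w$ and $B(0)$ a large multiple of $-w$, one arranges that $K_F$ has the sign of $-w\cdot\xi_F$ and $K_F+\delta_0\cdot\xi_F$ the sign of $w\cdot\xi_F$, while for $F$ with $\bar\gamma\cdot\xi_F\neq0$ the $k$-coefficient already carries the sign it has on each side of $\tilde W$; comparing reduced polynomials lexicographically then identifies $\eta(r)$-(semi)stability and $S$-equivalence, for small rational $r_0>0$, with Gieseker-(semi)stability in the chamber on one side of $\tilde W$ and, for $r$ near $1$, with the chamber on the other side — so $\mathcal M_{\eta(r_0)}$ and $\mathcal M_{\eta(1-r_0)}$ are precisely the two moduli spaces flanking $\bar L$ (for an endpoint on a wall one instead takes $B(0)$ trivial, so that $\mathcal M_{\eta(r_0)}=\mathcal M_L$). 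Theorem~\ref{thm:intermediate} applied to the bounded uniform segment $\eta$ on $[r_0,1-r_0]$ now yields a finite chain of Thaddeus-flips through moduli spaces of (twisted) semistable sheaves between them, and concatenating over all the wall-crossings proves the theorem. The crux — and the only place the surface hypothesis really enters — is this last matching: a uniform segment on a surface is necessarily of constant slope class, so it can detect a Gieseker-wall only through its lower-order, twist-dependent term, which reproduces the wall-crossing precisely because on a surface every wall is of the first kind, governed by the single linear quantity $\gamma\cdot\xi_F$; in higher dimensions the walls of the second kind are not of this form, which is why the threefold version (Theorem~\ref{thm:3intermediate}) is restricted to a single wall of the first kind and has to iterate the zooming.
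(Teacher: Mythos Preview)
Your approach is essentially the same as the paper's: reduce to a single wall-crossing at a rational point $\overline{L}$, construct a uniform stability segment with the ample part fixed at $\overline{L}$ (so the slope class $\gamma$ is constant) and a linearly varying twist that reproduces the two adjacent Gieseker conditions at its endpoints, then apply Theorem~\ref{thm:intermediate}. The only difference is that the paper writes down a specific and simpler twist, namely $\sigma(t)=\bigl(\overline{L},\overline{L};\tfrac{t}{\vol(\overline{L})}L_1^a,\tfrac{1-t}{\vol(\overline{L})}L_0^a\bigr)$ for $a\gg0$, and defers the endpoint matching to \cite[Sect.~3]{MatsukiWentworth} rather than sketching it via the orthogonal decomposition as you do.
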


Throughout this discussion we fix the topological type of the sheaves under consideration.  
The ample cone $\Amp(X)$ is divided into chambers by a locally finite collection of rational linear walls 
that witness the change in slope stability.   Suppose that $L_0,L_1\in \Amp_\Z(X)$ lie in the interior of adjacent chambers separated only by a single wall, so for each wall $W$ the straight line through $L_{0}$ and $L_{1}$ is either contained in $W$ or meets it at a single point (necessarily rational) which we denote by $\overline{L}$.  We may assume that $\overline{L}$ is the midpoint of this segment, and by rescaling we may as well take it to be integral.    For a large positive integer $a$, consider the stability segment given by
$$ \sigma(t) = (\overline{L},\overline{L}; \frac{t}{\vol(\overline{L})} L_1^a , \frac{1-t}{\vol(\overline{L})}L_0^a), \quad t \in [0,1].$$
Using Riemann-Roch one checks easily that $(\sigma(t))_{t \in [0,1]}$ is a uniform segment of stability parameters.  
Moreover, arguing along the lines of \cite[Sect.~3]{MatsukiWentworth}, it is not hard to show 
that for $a$ sufficiently large, any sheaf $E$ is (semi)stable with respect to $\sigma(0)$ (resp.\ $\sigma(1), \sigma(1/2)$) 
if and only if it is Gieseker-(semi)stable with respect to $L_0$ (resp.\ $L_1$, $\overline{L}$);  we omit the details, since the same technique is used more generally below.

Then, we may conclude from Theorem \ref{thm:intermediate} that $\mathcal M_{L_0}, \mathcal M_{L_1}$, and $\mathcal M_{\overline{L}}$ are  related to each other through a finite number of Thaddeus-flips through moduli spaces of sheaves. Since this holds for any two such $L_{0}$ and $L_{1}$, one may apply this statement repeatedly to deduce the same holds for any two moduli spaces of Gieseker-semistable torsion-free sheaves on a smooth surface.

\section{Threefolds}\label{sect:threefolds}

Our aim is to prove the following.

\begin{theorem}[Identification of intermediate spaces]\label{thm:3intermediate}
  Let $X$ be a smooth projective manifold of dimension $3$.    Suppose $L_0$ and $L_1$ are ample line bundles ``separated by a single wall of the first kind''.  Then, the moduli spaces $\mathcal M_{L_0}$ and $\mathcal M_{L_1}$ of Gieseker-semistable torsion-free sheaves of topological type $\tau$ are related by a finite number of Thaddeus-flips through spaces of the form $\mathcal M_{\sigma}$ for some bounded stability parameter $\sigma$.
\end{theorem}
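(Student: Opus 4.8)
The plan is to realise the ``zooming'' scheme of the introduction concretely, using the surface case of Section~\ref{sec:exsurfaces} as a template and iterating it in two rounds --- one round to absorb the single wall of the first kind, a second to absorb the resulting walls of the second kind. The first move is a reduction via Proposition~\ref{prop:chamberstructuregieseker}: fix an open relatively compact $K$ witnessing, in the sense of Definition~\ref{def:separatedwallfirstkind}, that $L_0$ and $L_1$ are separated by a single wall $W=\tilde W_{F_0,1}$ of the first kind. Since the walls $\tilde W_{F,i}$ are cones, we may slide $L_0,L_1$ inside their chambers and rescale them --- changing neither $\mathcal M_{L_0},\mathcal M_{L_1}$ nor the hypothesis --- so that $L_0,L_1$ are general, the segment $[L_0,L_1]$ meets $W$ at an interior point $\overline L$, and, after the usual harmless normalisations, $\overline L$ may be taken to be an integral class lying on $W$; recall from Section~\ref{sec:chambrstructuregieseker} that $W$ is a \emph{slope} wall.

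For the first zoom, across $W$, I would choose a large integer $a$ and the stability segment
$$\eta(t)=\Bigl(\overline L,\overline L;\ \tfrac{1-t}{\vol(\overline L)}B_0,\ \tfrac{t}{\vol(\overline L)}B_1\Bigr),\qquad t\in[0,1],$$
where $B_i$ is a line bundle $a$-close to $L_i^{\,a}$, selected so that $\int_X c_1(\overline L)^2\cdot c_1(B_1-B_0)=0$ (a single scalar relation). Because $\underline L=(\overline L,\overline L)$, the slope class of $\eta(t)$ equals $\vol(\overline L)^{-1}c_1(\overline L)^2$ for every $t$, so $\eta$ is bounded by Remark~\ref{rem:RRochIII} together with the boundedness of families of slope-semistable sheaves (\Iref{Theorem}{thm:K^+}). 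A Riemann--Roch computation shows that in $p^{\eta(t)}_E(k)=\tfrac{k^3}{6}+a_2(E)k^2+a_1(E,t)k+a_0(E,t)$ the coefficient $a_2(E)$ is independent of $t$ --- so $\eta$ is ``one coefficient closer to uniform'' than the naive straight segment joining Gieseker-$L_0$ and Gieseker-$L_1$ (as in the remark following Theorem~\ref{thm:MTprincipleGieseker}) --- while $a_1,a_0$ remain linear in $t$. Arguing as in Proposition~\ref{prop:keymultigeiseker}, comparing differences of reduced Hilbert polynomials term by term, using $a\gg0$, and using that (as $W$ is the only slope wall between $L_0$ and $L_1$) only saturated $F\subset E$ with $\mu^{\overline L}(F)=\mu^{\overline L}(E)$ need be tested, one checks that for small $\varepsilon>0$ the parameters $\eta(\varepsilon)$ and $\eta(1-\varepsilon)$ are bounded and that $\eta(\varepsilon)$-(semi)stability (resp.\ $\eta(1-\varepsilon)$-(semi)stability) coincides with Gieseker-(semi)stability with respect to $L_0$ (resp.\ $L_1$), $S$-equivalence included; hence $\mathcal M_{L_0}=\mathcal M_{\eta(\varepsilon)}$ and $\mathcal M_{L_1}=\mathcal M_{\eta(1-\varepsilon)}$.

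For the second zoom I would run over the finitely many rational walls of $\eta$ in $[\varepsilon,1-\varepsilon]$. At such a wall $\overline t$, only $F\subset E$ with $a_2(F)=a_2(E)$ can destabilise (the leading term of $p^{\eta(t)}_F-p^{\eta(t)}_E$ being $t$-independent), and for these the behaviour is governed by the functions $a_1,a_0$, both linear in $t$. Fixing $t_-<\overline t<t_+$ in the neighbouring $\eta$-chambers, I would replace $\eta$ near $\overline t$ by
$$\zeta(s)=\bigl(\overline L,\overline L,\overline L;\ \lambda_0(s)C_0,\ \lambda_1(s)C_1,\ \lambda_2(s)C_2\bigr),$$
with, say, line bundles $C_i$ chosen so that $c_1(C_1)=\tfrac12\bigl(c_1(C_0)+c_1(C_2)\bigr)$ are \emph{collinear} in $N^1(X)_{\mathbb R}$ and $C_0,C_2$ are $b$-close to $B_0,B_1$ for large $b$, and with the normalised weights $\lambda_i(s)$ moving in the direction proportional to $(1,-2,1)$. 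The collinearity makes $\sum_i\lambda_i(s)c_1(C_i)$ --- hence, by Riemann--Roch, both $a_2$ and $a_1$ of $p^{\zeta(s)}_E$ up to the cancelling $E$-independent terms --- independent of $s$, so $\zeta$ is a \emph{uniform}, and (by the same slope-class argument) bounded, stability segment. A final Proposition~\ref{prop:keymultigeiseker}-type comparison, now with $b\gg0$, gives $\mathcal M_{\eta(t_-)}=\mathcal M_{\zeta(\varepsilon')}$ and $\mathcal M_{\eta(t_+)}=\mathcal M_{\zeta(1-\varepsilon')}$ for small $\varepsilon'$, so Theorem~\ref{thm:intermediate} connects these two spaces by finitely many Thaddeus-flips through moduli spaces of sheaves. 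Since $\mathcal M_{\eta(t)}$ is constant on each $\eta$-chamber, including the endpoints, by the rational wall structure for multi-Gieseker-stability, running over the finitely many walls of $\eta$ yields that $\mathcal M_{L_0}=\mathcal M_{\eta(\varepsilon)}$ and $\mathcal M_{L_1}=\mathcal M_{\eta(1-\varepsilon)}$ are joined by finitely many Thaddeus-flips through spaces $\mathcal M_{\zeta(s)}$, each a moduli space of twisted multi-Gieseker-semistable sheaves for a bounded stability parameter, which is the assertion.

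The main obstacle is the explicit construction of the two families of zoom segments: the twisting bundles (and, in the second round, their collinearity) and all the normalisations must be chosen so that \emph{simultaneously} each segment is bounded, the prescribed coefficients of the reduced multi-Hilbert polynomials become independent of the segment parameter --- eventually genuine uniformity --- and the endpoints of each zoom continue to cut out exactly the intended Gieseker- or previously-built multi-Gieseker-stability, the last requirement being precisely where the $a,b\gg0$ comparisons and the single-wall-of-the-first-kind hypothesis enter. Carrying out the Riemann--Roch bookkeeping, and verifying that after the second round one is genuinely left with uniform segments so that the process terminates, is the bulk of the work.
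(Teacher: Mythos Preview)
Your proposal follows the surface template of Section~\ref{sec:exsurfaces} too closely and imports its crucial hidden hypothesis: that the point $\overline L$ where the segment $[L_0,L_1]$ meets the wall $W=\tilde W_{F_0,1}$ can be taken rational. On a surface this is automatic because walls of the first kind are linear, but on a threefold $\tilde W_{F,1}=\{L:\hilb_1(F,E)\cdot c_1(L)^2=0\}$ is a \emph{quadric} in $N^1(X)_{\mathbb R}$, and the intersection of a rational line with a rational quadric need not be rational. Your claim that ``after the usual harmless normalisations, $\overline L$ may be taken to be an integral class'' is exactly the point at issue; sliding $L_0,L_1$ inside their chambers gives you a one-parameter family of intersection points, but no mechanism to force one of them to be rational. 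This is precisely the obstacle flagged in Remark~\ref{rmk:Schmitt}: Schmitt's earlier approach needed a rational point on the wall, and the paper's method is designed to avoid that assumption. Since every stability segment you build has $\underline L=(\overline L,\ldots,\overline L)$, your entire construction collapses once $\overline L$ is irrational.

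The paper's route circumvents this by never leaving the rational line bundles $L_0,L_1$. Its first segment is $\sigma(t)=(L_0,L_1;\sigma_0(t),\sigma_1(t))$ with the weights varying linearly; the walls in the $t$-variable are then cut out by \emph{linear} rational functions of $t$ (since the multi-Hilbert polynomial is linear in the weights), so the critical value $\overline t$ is automatically rational. The subsequent zooms $\eta(s)$ and $\zeta(r)$ again keep $\underline L=(L_0,L_1)$ fixed and achieve the successive ``one more coefficient independent of the parameter'' by twisting with formal combinations of powers of $L_0$ and $L_1$; the single-wall-of-the-first-kind hypothesis enters only at the very end (Lemma~\ref{problem}) to force $\hilb_2(F,E)\cdot c_1(L_j)=0$ when needed. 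The overall architecture --- two zooms terminating in a uniform segment --- is the same as yours, but the choice of base bundles is not a cosmetic difference: it is what makes the rationality of the walls, and hence the whole induction, go through.
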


The definition of what it means to be ``separated by a single wall of the first kind'' will be given below (Definition \ref{def:separatedwallfirstkind}).

\begin{remark}\label{rmk:Schmitt}
  Schmitt \cite{Schmitt} essentially considers the case 
that $L_0$ and $L_1$ are separated by a single wall (on a manifold of any dimension), 
and studies the variation problem under the additional assumption 
that this wall contains a rational point $\overline{L}$ (and makes no attempt to identify the intermediate spaces).  The main advantage of the technique used in the present paper is that we do not need to consider points on this wall in $\Amp_\R(X)$, and instead move to the space of multi-Gieseker-stability parameters for the variation problem, at which point the rationality of the wall in $\Amp_\R(X)$ becomes irrelevant.  
\end{remark}

\subsection{Notation and setup for the proof of Theorem~\ref{thm:3intermediate}}

  In order to shorten notation, from here on, a given stability segment $(\sigma(t))_{t \in [0,1]}$ will be written as $\sigma(\cdot)$.  As we are interested in the moduli spaces, rather than the precise stability parameters, 
the following equivalence relations are convenient:

\begin{definition}\label{def:equivalenerelationsstabparameters}
  Let $\sigma$ and $\tilde{\sigma}$ be bounded stability parameters. 
  \begin{enumerate}
  \item  We write $\sigma\equiv \tilde{\sigma}$ if any sheaf of topological type $\tau$ is (semi)stable with respect to $\sigma$ if and only if it is (semi)stable with respect to $\tilde{\sigma}$.    
  \item We write $\mathcal M_{\sigma}  \leftrightdash \mathcal M_{\tilde{\sigma}}$ 
if the moduli spaces $\mathcal M_{\sigma}$ and $\mathcal M_{\tilde{\sigma}}$ are related 
by a finite number of Thaddeus-flips through spaces of the form $\mathcal M_{\sigma_i}$ 
for some bounded stability parameters $\sigma_i$.
  \end{enumerate}
\end{definition}

\subsubsection{Riemann-Roch}
To simplify various Riemann-Roch calculations, 
we shall use the following notation adapted from Schmitt \cite[Sect.~2.1]{Schmitt}.  
Let $X$ be a smooth manifold of dimension $d$ and $L$ be an ample line bundle on $X$.  
 For a torsion-free sheaf $E$ define
$$ \Hilb_i(E) := (\ch(E) \Todd(X))_{i} \quad \text{ and } \quad\hilb_i(E) := \frac{\Hilb_i(E)}{\rk(E)},$$
where the subscript $i$ denotes the part of this product in $H^{2i}(X,\mathbb Q)$.  Also, given a proper subsheaf $F\subset E$ we set
$$ \hilb_i(F,E) := \hilb_i(F) - \hilb_i(E).$$
For top degree forms we shall omit the integration over $X$ when it is clear from context, so for example
$$ \hilb_i(E)c_1(L)^{d-i} : =\int_X \hilb_i(E) c_1(L)^{d-i} = \frac{1}{\rk(E)} \int_X \Hilb_i(E) c_1(L)^{d-i}.$$
Now, for a polynomial of the form
$$p(k) = p_{1} \frac{k^{d-1}}{(d-1)!} + p_2 \frac{k^{d-2}}{(d-2)!} + \cdots + p_d, $$ we write
$$p = \langle\langle  p_{1}||p_2||\cdots||p_d\rangle\rangle$$
for the vector of coefficients.  Thus, the ordering on such polynomials $p$ is the lexicographic order on the vector $\langle\langle  p_{1}||\cdots||p_d\rangle\rangle$.   So, by Riemann-Roch, the reduced Hilbert polynomial of a torsion-free sheaf $E$ with respect to $L$ is $p^L_E = \frac{k^d}{d!} + p(k)$, where
\begin{equation}
p = \frac{1}{\vol(L)}\langle\langle \hilb_1(E)
  c_1(L)^{d-1}||\hilb_2(E)
  c_1(L)^{d-2}||\ldots||\hilb_d(E)\rangle\rangle.\label{eq:rrochinter}
\end{equation}
Finally if $\sigma$ is a stability parameter, then for any proper subsheaf $F\subset E$ we let
$$ p^\sigma_{F\subset E} : = p^\sigma_F - p^\sigma_E.$$

\subsubsection{Gieseker Walls}\label{subsubsect:GiesekerWalls} We now define what we mean for two line bundles to be separated by a single wall of the first kind.  For simplicity here we assume $X$ is smooth of dimension 3 (but a similar story holds in any dimension).  Fix a relatively compact open and connected $K\subset \Amp(X)_{\mathbb R}$.    As defined in Section \ref{sec:chambrstructuregieseker} this gives rise to a collection  $\tilde{\mathcal W}_K$ of non-trivial walls that witnesses the change in Gieseker-stability.   For convenience we recall the construction.  Let $\Cone(K) = \{ \lambda L : L\in K, \lambda\in \mathbb R_{>0}\}$,
\begin{equation}\label{eq:mathcalS'K:repeat}
  \mathcal S'_K := \left\{E \left|\begin{split}  & E \text{ is torsion-free of topological type } \tau \text{ that is }\\
& \text{slope semistable with respect to some } \gamma\in p(\Cone(K))
\end{split} \right.\right\},
\end{equation}
and
\begin{equation}\label{eq:mathcalSK:repeat}
  \mathcal S_K := \left\{
F \left|\begin{split} & F \text{ is a saturated subsheaf of some } E\in \mathcal S'_K \text{ and } \\
&\mu^L(F) \ge \mu^L(E) \text{ for some } L\in \Cone(K)
\end{split}\right.\right\},
\end{equation}
which are both bounded.  For each $F\in \mathcal S_{K}$ write the difference of reduced Hilbert-polynomials as
$$p_{F}^L(k) - p_E^L(k)  =\frac{1}{\vol(L)}\sum_{i=1}^{3} \beta_{F,i}^L \frac{k^{d-i}}{(d-i)!}$$
where $E$ is any sheaf of topological type $\tau$, and for $i=1,2$ let
$$ \tilde{W}_{F,i} = \{ L\in \Amp(X)_{\mathbb R} \mid \beta_{F,i}^L = 0\}.$$

\begin{definition}\label{def:kindsofwalls}
We call the $\tilde{W}_{F,1}$ \emph{walls of the first kind} and the $\tilde{W}_{F,2}$ \emph{walls of the second kind}.  We let $\tilde{\mathcal W}_K$ be the finite set of walls $\tilde{W}_{F,i}$ for $F\in \mathcal S_K$ and $i=1,2$ such that $\tilde{W}_{F,i}$ is neither empty nor all of $\Amp(X)_{\mathbb R}$.
\end{definition}

    \begin{definition}\label{def:separatedwallfirstkind}
Let $K\subset \Amp(X)_{\R}$ be open and relatively compact.     We say that $L_0,L_1\in \Cone(K)\cap \Amp(X)_{\mathbb Q}$ are \emph{separated by a single wall the first kind} if
      \begin{enumerate}
      \item $L_0$ and $L_1$ do not lie on any of the walls in $\tilde{\mathcal W}_K$
      \item The straight line segment between $L_0$ and $L_1$ is contained in $\Cone(K)$ and meets precisely one wall of the first kind, and does not meet any walls of the second kind.
      \end{enumerate}
    \end{definition}

Observe that this definition depends on the chosen set $K$, and that if $L_0$ and $L_1$ are separated by a single wall of the first kind then they are certainly general (as in Definition \ref{def:generalpolarisation}).  Thus, as long as there exist walls of the first kind, there will be plenty of $L_0$ and $L_1$ that satisfy this condition.

\begin{remark}\label{rmk:whereseparatedfirstkindisused}
 The condition imposed in Theorem~\ref{thm:3intermediate} that the two lines bundles be separated by a single wall of the first kind is used only in the final step of the argument, cf.~Lemma~\ref{lem:step3open}.
\end{remark}

\subsubsection{Multi-Gieseker Walls}\label{sec:multigiesekerwalls}

We now describe a wall structure associated to a stability segment $\sigma(\cdot)$.  This is essentially a repeat of our multi-Gieseker chamber structure from \Iref{Section}{sec:chamber}, but we will need something slightly more technical, so we give a self-contained account here.

Suppose that $\sigma(t) = (\underline{L}; B_1(t),\ldots,B_{j_0}(t))$ for $t\in [0,1]$ is a segment of stability parameters.  For each $t\in [0,1]$ set
\begin{equation}
 \gamma_t : = \sum\nolimits_j \rank(B_j(t)) c_1(L_j)^{d-1} \in N_1(X)_\mathbb{R}.\label{eq:defgamma}
\end{equation}
So by Riemann-Roch, if $E$ is semistable with respect to $\sigma(t)$, then it is slope semistable with respect to $\gamma_{t}$.

\begin{definition}\label{def:detectchangemultigeiseker}
We say that a bounded set $\mathcal S$ of sheaves \emph{detects the change of multi-Gieseker-stability for} $\sigma(\cdot)$ if the following holds:  if $E$ of topological type $\tau$ and semistable with respect to $\sigma(t)$ for some $t\in [0,1]$, and $F\subset E$ is saturated with  $\mu_{\gamma_{t'}}(F)\ge \mu_{\gamma_{t'}}(E)$ for some $t'\in [0,1]$, then $F\in \mathcal S$.
\end{definition}

Below we will apply this with $\mathcal S$ being  a set of type $\mathcal S_K$ as in \eqref{eq:mathcalSK:repeat}, but that is not important yet.  The terminology is justified by the following:

\begin{lemma}\label{lem:detectchangemultigieseker}
Let $\mathcal S$ be a bounded set of sheaves that detects the change of multi-Gieseker-stability for a stability segment $\sigma(\cdot)$ and let $t,t'\in [0,1]$.     Suppose that for all torsion-free sheaves $E$ of topological type $\tau$ and all saturated $F\subset E$ with $F\in \mathcal S$ we have
$$p_{F\subset E}^{\sigma(t)} (\le) 0 \text{ if and only if } p_{F\subset E}^{\sigma(t')} (\le) 0.$$
Then $\sigma(t) \equiv \sigma(t')$.
\end{lemma}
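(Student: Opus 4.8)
The plan is to reduce the statement to the behaviour of a single destabilising subsheaf and then feed it into the hypothesis of the lemma together with Definition~\ref{def:detectchangemultigeiseker}. Since the roles of $t$ and $t'$ are symmetric, it suffices to show that any torsion-free sheaf $E$ of topological type $\tau$ that is $\sigma(t)$-(semi)stable is also $\sigma(t')$-(semi)stable; I would establish the semistable case first and deduce the stable case from it.

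For the semistable case I would argue by contradiction. If $E$ is $\sigma(t)$-semistable but not $\sigma(t')$-semistable, there is a proper nonzero subsheaf destabilising $E$ for $\sigma(t')$; replacing it by its saturation $F$ (which can only increase the reduced multi-Hilbert polynomial, and which automatically has rank strictly smaller than $E$, since a proper subsheaf of full rank always has strictly smaller reduced polynomial) we may assume $F$ is saturated with $p^{\sigma(t')}_{F\subset E}>0$. The crucial step is to verify that $F\in\mathcal S$. Here I would compare the coefficients of $k^{d-1}$: as both reduced polynomials are of the form $\frac{k^d}{d!}+\cdots$, the inequality $p_F^{\sigma(t')}>p_E^{\sigma(t')}$ forces $\hat\mu^{\sigma(t')}(F)\ge\hat\mu^{\sigma(t')}(E)$, and by the Riemann--Roch formula of Remark~\ref{rem:RRochIII}, in which the constant $C_1$ is strictly positive, this is equivalent to $\mu_{\gamma_{t'}}(F)\ge\mu_{\gamma_{t'}}(E)$, with $\gamma_{t'}$ as in \eqref{eq:defgamma}. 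Since $E$ is $\sigma(t)$-semistable for this particular $t\in[0,1]$, Definition~\ref{def:detectchangemultigeiseker} now gives $F\in\mathcal S$.

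With $F\in\mathcal S$ secured, the hypothesis of the lemma applies: $p^{\sigma(t)}_{F\subset E}(\le)0$ holds if and only if $p^{\sigma(t')}_{F\subset E}(\le)0$ does. Since $p^{\sigma(t')}_{F\subset E}>0$, the relation $p^{\sigma(t')}_{F\subset E}\le 0$ fails, hence so does $p^{\sigma(t)}_{F\subset E}\le 0$, i.e.\ $p_F^{\sigma(t)}>p_E^{\sigma(t)}$, contradicting $\sigma(t)$-semistability of $E$. For the stable case I would run the same argument: if $E$ is $\sigma(t)$-stable it is $\sigma(t)$-semistable, hence $\sigma(t')$-semistable by the previous step; were it not $\sigma(t')$-stable there would be a proper saturated subsheaf $F$ with $0\neq F\neq E$ and $p_F^{\sigma(t')}=p_E^{\sigma(t')}$ (equality of reduced polynomials forces $\operatorname{rank}F<\operatorname{rank}E$, so $F$ is a genuine proper subobject), which again gives $\mu_{\gamma_{t'}}(F)=\mu_{\gamma_{t'}}(E)$ and hence $F\in\mathcal S$; then the equivalence of the $(<)$-relations, combined with the already-established equivalence of the $(\le)$-relations, forces $p_F^{\sigma(t)}=p_E^{\sigma(t)}$, contradicting $\sigma(t)$-stability.

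The argument is essentially bookkeeping, so I do not expect a serious obstacle. The one point that genuinely uses the machinery set up earlier is the membership $F\in\mathcal S$: this is where the strict positivity of the Riemann--Roch constant $C_1$ is needed to translate the reduced-polynomial inequality into the slope inequality, and where the precise shape of Definition~\ref{def:detectchangemultigeiseker} (semistability of $E$ at one parameter, slope inequality at a possibly different one) is used; relatedly, one must be careful in each case to replace the destabilising subsheaf by a saturated one of smaller rank so that it is eligible both for that definition and for the hypothesis of the lemma.
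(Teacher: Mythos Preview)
Your argument is correct and follows essentially the same route as the paper's proof: pick a saturated subsheaf $F\subset E$ that would destabilise $E$ with respect to $\sigma(t')$, observe via the leading coefficient (Remark~\ref{rem:RRochIII}) that $\mu_{\gamma_{t'}}(F)\ge\mu_{\gamma_{t'}}(E)$ so that $F\in\mathcal S$ by Definition~\ref{def:detectchangemultigeiseker}, and then invoke the hypothesis to reach a contradiction with $\sigma(t)$-(semi)stability. The paper phrases this as a direct argument rather than by contradiction and handles the stable and semistable cases simultaneously via the $(\le)$ convention, but the substance is identical.
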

\begin{proof}
We have seen essentially this statement before (see Lemma \ref{lem:mathcalSrelevance}).   Suppose that $E$ is of topological type $\tau$ and semistable with respect to $\sigma(t)$, and we show it is also (semi)stable with respect to $\sigma(t')$.  Let $F\subset E$ be saturated.   If $\mu_{\gamma_{t'}}(F)<\mu_{\gamma_{{t'}}}(E)$ then $F$ clearly does not destabilise $E$ with respect to $\sigma(t')$.  
Otherwise by definition $F\in \mathcal S$, and so the hypothesis that $E$ is semistable with respect to $\sigma(t)$ gives $p_{F\subset E}^{\sigma(t)} (\le) 0$ and hence $p_{F\subset E}^{\sigma(t')} (\le) 0$.
\end{proof}

 Suppose now that $\mathcal S$ detects the change of multi-Gieseker-stability for $\sigma(\cdot)$.  For each $F\in \mathcal S$ write the difference of reduced multi-Hilbert polynomials as
$$ p_F^{\sigma(t)}  - p_E^{\sigma(t)} = \sum_{i=1}^{d} h_{F,i}(t) \frac{k^{d-i}}{(d-i)!},$$
where $E$ is any sheaf of topological type $\tau$.   Then,  $h_{F,i}\colon [0,1]\to \mathbb R$ is linear, and thus is either identically zero or has at most 1 root in $[0,1]$.

\begin{definition}\label{def:multiGiesekerwalls}
Let $\mathcal S$ be a bounded set of sheaves that detects the change of multi-Gieseker-stability for $\sigma(\cdot)$.  We define the set of \emph{multi-Gieseker walls associated to} $\mathcal S$ to be the set of all roots of $h_{F,i}$ for $i=1,\ldots,d$ and $F\in \mathcal S$ among all those $h_{F,i}$ that are not identically zero.
\end{definition}

As each $h_{F,i}$ depends only on the topological type of $F$, the set of multi-Gieseker walls associated to $\mathcal S$ consists of a finite number of rational points in $[0,1]$, which we denote by $\overline{t}_1<\ldots\overline{t}_N$.  This divides $[0,1]$ into ``chambers'' within which multi-Gieseker-stability is unchanged.

\begin{corollary}\label{cor:detectmultiGieseker} Let $\sigma(\cdot)$ be a stability segment and let  $\overline{t}_{1}<\ldots<\overline{t}_{N}$ be the multi-Gieseker walls associated to a bounded set of sheaves that detects the change in multi-Gieseker-stability for $\sigma(\cdot)$.  Then,
$$\mathcal M_{\sigma(t)} = \mathcal M_{\sigma(t')} \text{ for all }t,t'\in (\overline{t}_i,\overline{t}_{i+1}).$$  
\end{corollary}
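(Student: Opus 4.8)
The plan is to deduce the statement from Lemma~\ref{lem:detectchangemultigieseker} by checking that, within a fixed chamber, the comparison of reduced multi-Hilbert polynomials does not change, and then to upgrade the resulting equivalence of stability notions to an honest equality of moduli spaces by tracking Jordan--H\"older filtrations.

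Fix a bounded set $\mathcal{S}$ detecting the change of multi-Gieseker-stability for $\sigma(\cdot)$, and let $t,t'\in(\overline{t}_i,\overline{t}_{i+1})$. Recall that for every $F\in\mathcal{S}$ and every $i\in\{1,\ldots,d\}$ the function $h_{F,i}\colon[0,1]\to\mathbb{R}$ is linear, and that by Definition~\ref{def:multiGiesekerwalls} any $h_{F,i}$ which is not identically zero has its unique root among $\overline{t}_1,\ldots,\overline{t}_N$. Since the open interval $(\overline{t}_i,\overline{t}_{i+1})$ contains none of those points, each such $h_{F,i}$ is nowhere zero on it, with a fixed sign. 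Hence for any torsion-free $E$ of type $\tau$ and any saturated $F\subset E$ with $F\in\mathcal{S}$, the vector $(h_{F,1}(s),\ldots,h_{F,d}(s))$ has, for $s$ in this chamber, the same set of nonzero entries and the same signs among them; as the ordering on $p^{\sigma(s)}_{F\subset E}$ is the lexicographic order on that vector, we get
$$ p^{\sigma(t)}_{F\subset E}\,(\le)\,0 \iff p^{\sigma(t')}_{F\subset E}\,(\le)\,0, $$
where the conclusion holds for both $\le$ and $<$. Lemma~\ref{lem:detectchangemultigieseker} then yields $\sigma(t)\equiv\sigma(t')$, so the sets of (semi)stable sheaves of type $\tau$ coincide for $\sigma(t)$ and $\sigma(t')$.

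To obtain $\mathcal{M}_{\sigma(t)}=\mathcal{M}_{\sigma(t')}$ it remains to match $S$-equivalence classes, which one does along the lines of \Iref{Proposition}{prop:weakchamberstructure}. If $E$ is $\sigma(t)$-semistable with Jordan--H\"older filtration $0=E_0\subset E_1\subset\cdots\subset E_\ell=E$ (for $\sigma(t)$), then each $E_k$ satisfies $p^{\sigma(t)}_{E_k}=p^{\sigma(t)}_E$, is saturated in $E$ (a non-saturated subsheaf with equal reduced multi-Hilbert polynomial would contradict semistability of the torsion-free sheaf $E$ of dimension $d$), and has $\mu_{\gamma_t}(E_k)=\mu_{\gamma_t}(E)$, so $E_k\in\mathcal{S}$. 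Now $p^{\sigma(t)}_{E_k\subset E}=0$ forces $h_{E_k,i}(t)=0$ for all $i$, and since $t$ is not a wall, each $h_{E_k,i}$ vanishes identically, whence $p^{\sigma(t')}_{E_k\subset E}=0$ as well. Thus the same filtration has all successive quotients of reduced multi-Hilbert polynomial $p^{\sigma(t')}_E$, and the argument of \cite{GRTI} identifies $gr_{\sigma(t)}(E)$ with $gr_{\sigma(t')}(E)$. As $\mathcal{M}_{\sigma(t)}$ and $\mathcal{M}_{\sigma(t')}$ are coarse moduli spaces for one and the same collection of $S$-equivalence classes of type-$\tau$ semistable sheaves, they are equal.

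The only real content is the sign-invariance in the second paragraph; everything else is bookkeeping already performed in \cite{GRTI}. The point needing some care is that the Jordan--H\"older \emph{factors} $E_k/E_{k-1}$ have topological type different from $\tau$, so the comparison of graded objects must be routed through the destabilising \emph{subsheaves} $E_k$ --- which do lie in $\mathcal{S}$ --- rather than directly through the factors; this is precisely why the ``detecting set'' formalism is the natural framework here.
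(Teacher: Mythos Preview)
Your proof is correct and follows the same route as the paper's: verify that within a chamber the signs of all the $h_{F,i}$ are constant, so the lexicographic comparison $p_{F\subset E}^{\sigma(t)}\,(\le)\,0$ is unchanged, and then invoke Lemma~\ref{lem:detectchangemultigieseker}. The paper's proof stops there, stating only that the result ``is immediate from Lemma~\ref{lem:detectchangemultigieseker}''. You go further by checking that Jordan--H\"older filtrations, and hence $S$-equivalence classes, also agree; this is a genuine refinement, since Lemma~\ref{lem:detectchangemultigieseker} as stated only yields $\sigma(t)\equiv\sigma(t')$ (equality of (semi)stable loci), whereas the equality $\mathcal{M}_{\sigma(t)}=\mathcal{M}_{\sigma(t')}$ of coarse moduli spaces requires the $S$-equivalence relations to coincide as well. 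Your argument for this---that a destabilising $E_k\subset E$ is saturated, lies in $\mathcal{S}$, and has $h_{E_k,i}(t)=0$ at a non-wall point, forcing $h_{E_k,i}\equiv 0$---is correct and is the natural way to close the gap.
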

\begin{proof}
  This is immediate from Lemma \ref{lem:detectchangemultigieseker}, since for all saturated $F\subset E$ with $F\in \mathcal S$ we have $p_{F\subset E}^{\sigma(t)} (\le) 0$ if and only if $p_{F\subset E}^{\sigma(t')} (\le) 0$ by the definition of ordering of polynomials and the definition of the points $\overline{t}_{i}$.
\end{proof}

\subsubsection{Open and (almost) perfect stability parameters}

\begin{definition}\label{def:openstabilitysegment}
  We say the stability segment $\sigma(\cdot)$ is \emph{open} if for all $t>0$ sufficiently small we have $\sigma(0)\equiv \sigma(t)$ and $\sigma(1)\equiv \sigma(1-t)$. 
\end{definition}

The purpose of this definition is to avoid the possibility of non-trivial change of moduli space occurring at the endpoints of $[0,1]$. 

\begin{definition}\label{def:perfect}
Let $\mathcal S$ be a bounded set of sheaves.  We say a stability segment $\sigma(\cdot)$ is \emph{almost perfect} with respect to $\mathcal S$ if the following all hold:
\begin{enumerate}
\item The set $\mathcal S$ detects the change in multi-Gieseker-stability for $\sigma(\cdot)$.
\item $\sigma(\cdot)$ is open.
\item We have  $\mathcal M_{\sigma(t_0)} \leftrightdash \mathcal M_{\sigma(t_1)}$ for all $t_0,t_1\in (0,1)$ that are not among the multi-Gieseker walls associated to $\mathcal S$ (cf.\ Definition \ref{def:equivalenerelationsstabparameters}).
\end{enumerate}
\end{definition}
\begin{remark}\
\begin{enumerate}
\item[i)] We did not make any requirement that we make a ``minimal'' choice of multi-Gieseker walls (so certainly some of the walls could be superfluous).  Thus the condition (3) could, in principle, depend on the choice of set $\mathcal S$.  We observe also that even if $\sigma(\cdot)$ is open, some of these walls could be at the endpoints of $[0,1]$; however we only demand that condition (3) holds for points in the interior of $[0,1]$. 
\item[ii)] By Corollary \ref{cor:detectmultiGieseker}, and since there are only a finite number of multi-Gieseker walls, we can replace condition (3) by requiring instead that  $\mathcal M_{\sigma(t_0)} \leftrightdash \mathcal M_{\sigma(t_1)}$ for general points $t_0,t_1$ in the two open chambers adjacent to a single wall in $(0,1)$.    
\item[iii)] Furthermore,  by the work done in Section~\ref{sect:uniformity}, see especially Theorem \ref{thm:intermediate}, if $\sigma(\cdot)$ is bounded and uniform, then condition (3) certainly holds.  
\item[iv)]  We reserve the terminology \emph{perfect stability segment} to mean that condition (3) holds for all $t_0,t_1\in (0,1)$ including the walls, but this notion will hardly be used.
\end{enumerate}
\end{remark}

The purpose of this definition is the following obvious conclusion.

\begin{lemma}\label{lem:thepointofalmostperfect}
Suppose that $\sigma(\cdot)$ is almost perfect with respect to some bounded set $\mathcal S$ of sheaves.  Then, $\mathcal M_{\sigma(0)} \leftrightdash \mathcal M_{\sigma(1)}$.  
\end{lemma}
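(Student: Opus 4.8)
The plan is to splice together the three properties in Definition~\ref{def:perfect} into a single chain of Thaddeus-flips. First I would record that $\sigma(\cdot)$ is automatically a bounded stability segment, so that all the moduli spaces in sight are defined: applying Definition~\ref{def:detectchangemultigieseker} with $F=E$ (the whole sheaf being a saturated subsheaf of itself, trivially satisfying $\mu_{\gamma_{t'}}(E)\ge\mu_{\gamma_{t'}}(E)$), condition~(1) forces every sheaf of type $\tau$ that is $\sigma(t)$-semistable for some $t\in[0,1]$ to belong to the bounded set $\mathcal S$; in particular $\sigma(0)$, $\sigma(1)$ and every $\sigma(t)$ with $t\in(0,1)$ are bounded stability parameters. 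Let $\overline t_1<\cdots<\overline t_N$ be the finitely many multi-Gieseker walls associated to $\mathcal S$ (Definition~\ref{def:multiGiesekerwalls}).

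Next, using condition~(2) I would choose $\varepsilon\in(0,\tfrac{1}{2})$ small enough that $\sigma(0)\equiv\sigma(\varepsilon)$ and $\sigma(1)\equiv\sigma(1-\varepsilon)$ and --- shrinking $\varepsilon$, which is harmless since $\{\overline t_1,\dots,\overline t_N\}$ is finite --- so that neither $\varepsilon$ nor $1-\varepsilon$ equals any $\overline t_i$. Set $t_-:=\varepsilon$ and $t_+:=1-\varepsilon$, so that $t_-,t_+\in(0,1)$ are not among the multi-Gieseker walls. From $\sigma(0)\equiv\sigma(t_-)$ one gets $\mathcal M_{\sigma(0)}=\mathcal M_{\sigma(t_-)}$, hence $\mathcal M_{\sigma(0)}\leftrightdash\mathcal M_{\sigma(t_-)}$ through the empty chain of flips, and likewise $\mathcal M_{\sigma(1)}\leftrightdash\mathcal M_{\sigma(t_+)}$. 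Condition~(3) applies verbatim to the pair $t_-,t_+$ and gives $\mathcal M_{\sigma(t_-)}\leftrightdash\mathcal M_{\sigma(t_+)}$. Since the relation $\leftrightdash$ is reflexive, symmetric (reverse a chain of Thaddeus-flips) and transitive (concatenate two chains), splicing
$$\mathcal M_{\sigma(0)}\leftrightdash\mathcal M_{\sigma(t_-)}\leftrightdash\mathcal M_{\sigma(t_+)}\leftrightdash\mathcal M_{\sigma(1)}$$
yields $\mathcal M_{\sigma(0)}\leftrightdash\mathcal M_{\sigma(1)}$, which is the claim.

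The only step that is more than bookkeeping --- and hence the point I would take care over --- is the passage $\sigma(0)\equiv\sigma(t_-)\Rightarrow\mathcal M_{\sigma(0)}=\mathcal M_{\sigma(t_-)}$: besides the (semi)stable sheaves coinciding, one wants the $S$-equivalence classes on the strictly semistable boundary to agree as well. This is precisely what the openness hypothesis is designed to guarantee (compare the remark following Definition~\ref{def:openstabilitysegment}, whose stated aim is to preclude any non-trivial change of moduli space at the endpoints $t=0$ and $t=1$); in each concrete construction of an almost perfect segment one checks openness by a Lemma~\ref{lem:detectchangemultigieseker}-type comparison placing $\sigma(0)$ and $\sigma(t)$ (for small $t>0$) in a single chamber of a suitable bounded detecting family, whereupon Corollary~\ref{cor:detectmultiGieseker} upgrades $\equiv$ to an actual equality of moduli spaces. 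Granting this, the lemma follows formally, exactly as above.
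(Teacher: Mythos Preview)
Your argument is correct and follows the same short route as the paper: use openness to replace $\sigma(0)$ and $\sigma(1)$ by nearby interior parameters avoiding the walls, then invoke condition~(3). The paper's proof is two sentences and leaves implicit both the avoidance of walls and the passage from $\sigma(0)\equiv\sigma(t)$ to $\mathcal M_{\sigma(0)}=\mathcal M_{\sigma(t)}$; you have simply spelled these out, and your observation that condition~(1) already forces boundedness of the segment (by taking $F=E$ in Definition~\ref{def:detectchangemultigieseker}) is a tidy extra remark the paper does not make explicit.

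Your caveat about $S$-equivalence is well taken: the relation $\equiv$ of Definition~\ref{def:equivalenerelationsstabparameters} literally only matches (semi)stable objects, not their Jordan--H\"older filtrations, so strictly speaking one needs the stronger chamber-type statement (as in Proposition~\ref{prop:keymultigeiseker}(2) or Corollary~\ref{cor:detectmultiGieseker}) to conclude equality of moduli. The paper glosses over this in the proof of the lemma and relies on the sentence after Definition~\ref{def:openstabilitysegment} to signal that ``open'' is meant in this stronger sense; you have identified the same gap and resolved it the same way.
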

\begin{proof}
As $\sigma(\cdot)$ is open, we have for $t>0$ sufficiently small that $\mathcal M_{\sigma(0)} = \mathcal M_{\sigma(t)}$ and $\mathcal M_{\sigma(1)} = \mathcal M_{\sigma(1-t)}$.  Thus, we can apply condition (3) to deduce that $\mathcal M_{\sigma(t)} \leftrightdash \mathcal M_{\sigma(1-t)}$.
\end{proof}

\subsection{Strategy of the proof}\label{sec:strategy}
The idea for the proof of Theorem~\ref{thm:3intermediate} is rather natural: We first consider a stability segment $\sigma(\cdot)$ that joins two stability parameters we are interested in.  In general, it will not be uniform, but it will be cut up by a finite number of rational walls.  We then focus our attention on a specific wall, and find a new stability segment $\eta(\cdot)$ that joins stability parameters immediately to the left and to the right of this wall.  Moreover, we arrange that this new stability segment is closer to being uniform than $\sigma(\cdot)$, in that more terms of the corresponding multi-Hilbert polynomial are independent of $t$. We then apply the same argument to $\eta(\cdot)$, and continuing in this way, we arrive at a uniform stability segment, to get the desired conclusion.  Thus, since we are on a threefold, the proof divides into three steps, whose main arguments we will summarise now.

\subsubsection{Step 1}

Fix an open, relatively compact, connected $K\subset \Amp(X)_{\R}$, and assume $L_0, L_1 \in \Cone(K)$ are separated by a single wall of the first kind.  Without loss of generality we may assume that $L_{0}$ and $L_{1}$ are arbitrarily close to each other, and so we may assume that the line segment between $p(L_{0})$ and $p(L_{1})$ is completely contained in $p(\Cone(K))$.   Moreover, by perturbing $L_0$ and $L_1$ slightly we may, without loss of generality, assume that
\begin{equation}\label{eq:gen}
\begin{aligned}
\int_X c_1(L_0)^3 &\neq \int_X c_1(L_1)c_1(L_0)^2\quad \quad\text{ and}\\
\int_X c_1(L_1)^3 &\neq \int_X c_1(L_0)c_1(L_1)^2.
\end{aligned}
\end{equation}
Finally, by rescaling both $L_0$ and $L_1$ simultaneously we may assume they are both integral.   As we have done in Section~\ref{subsubsect:GiesekerWalls} above, set
\begin{equation}\label{eq:mathcalS'_K:repeatagain}
  \mathcal S'_K := \left\{E \left|\begin{split}  & E \text{ is torsion-free of topological type } \tau \text{ that is }\\
& \text{slope semistable with respect to some } \gamma\in p(\Cone(K))
\end{split} \right.\right\},
\end{equation}
and
\begin{equation}\label{eq:mathcalS_K:repeatagain}
  \mathcal S_K := \left\{
F \left|\begin{split} & F \text{ is a saturated subsheaf of some } E\in \mathcal S'_K \text{ and } \\
&\mu^L(F) \ge \mu^L(E) \text{ for some } L\in \Cone(K)
\end{split}\right.\right\}
\end{equation}
Now set
\begin{equation*}
 \sigma_0(t) := \frac{1-t}{\vol(L_0)} , \quad \quad
\sigma_1(t) = \frac{t}{\vol(L_1)}
\end{equation*}
and consider the stability segment $$ \sigma(t) := (L_0,L_1, \sigma_0(t), \sigma_1(t)).$$
Clearly, a sheaf is (semi)stable with respect to $\sigma(0)$ (resp.\ $\sigma(1)$) if and only if it is (semi)stable with respect to $L_0$ (resp.\ $L_1$).    Thus, to prove Theorem~\ref{thm:3intermediate} it is sufficient to prove that $\sigma(\cdot)$ is almost perfect with respect to $\mathcal S_K$, for then $\mathcal M_{L_0} = \mathcal M_{\sigma(0)}\leftrightdash \mathcal M_{\sigma(1)}= \mathcal M_{L_1}$.     To this end, we start with:

\begin{lemma}\label{lem:S_kwitnesssigma}
$\mathcal S_K$ detects the change of multi-Gieseker-stability for $\sigma(\cdot)$.
\end{lemma}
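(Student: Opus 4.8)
The plan is to unwind both definitions and reduce everything to the geometric normalisations fixed at the start of Step~1. Recall that, by construction, $\gamma_t = \sigma_0(t)\,c_1(L_0)^{d-1} + \sigma_1(t)\,c_1(L_1)^{d-1}$, which under the identification $p(L_i) = c_1(L_i)^{d-1}$ is a non-negative combination $a\,p(L_0) + b\,p(L_1)$ with $a = \sigma_0(t)$ and $b = \sigma_1(t)$ not both zero for $t \in [0,1]$. First I would show that any $E$ of topological type $\tau$ that is $\sigma(t)$-semistable for some $t$ lies in $\mathcal S'_K$: by the Riemann--Roch implications recorded in Remark~\ref{rem:RRochIII}, such an $E$ is slope semistable with respect to $\gamma_t$, so it suffices to check $\gamma_t \in p(\Cone(K))$. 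Writing $\gamma_t = (a+b)\bigl(\tfrac{a}{a+b}p(L_0) + \tfrac{b}{a+b}p(L_1)\bigr)$, the bracketed vector lies on the segment joining $p(L_0)$ and $p(L_1)$, which is contained in $p(\Cone(K))$ by the assumption (made at the start of Step~1, after bringing $L_0$ and $L_1$ close together) that this segment stays in $p(\Cone(K))$; since $p(\Cone(K))$ is invariant under scaling by the positive scalar $a+b$, we conclude $\gamma_t \in p(\Cone(K))$ and hence $E \in \mathcal S'_K$.

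Next I would handle a saturated subsheaf $F \subset E$ with $\mu_{\gamma_{t'}}(F) \ge \mu_{\gamma_{t'}}(E)$ for some $t' \in [0,1]$, and produce an $L \in \Cone(K)$ with $\mu^L(F) \ge \mu^L(E)$. The key point is that $\gamma \mapsto \mu_\gamma(F) - \mu_\gamma(E) = \bigl(\tfrac{c_1(F)}{\rank F} - \tfrac{c_1(E)}{\rank E}\bigr)\cdot\gamma$ is linear in $\gamma$. Expanding $\gamma_{t'} = a'\,p(L_0) + b'\,p(L_1)$ with $a', b' \ge 0$ not both zero and using $\mu^L(\cdot) = \mu_{p(L)}(\cdot)$, the hypothesis becomes $a'\bigl(\mu^{L_0}(F) - \mu^{L_0}(E)\bigr) + b'\bigl(\mu^{L_1}(F) - \mu^{L_1}(E)\bigr) \ge 0$. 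A non-negative combination of two real numbers being $\ge 0$, with the coefficients not both zero, forces at least one summand to be $\ge 0$, so $\mu^{L_i}(F) \ge \mu^{L_i}(E)$ for $i = 0$ or $i = 1$, and $L_i \in \Cone(K)$. Combining the two steps, $F$ is a saturated subsheaf of $E \in \mathcal S'_K$ with $\mu^L(F) \ge \mu^L(E)$ for some $L \in \Cone(K)$, which is exactly the defining condition \eqref{eq:mathcalS_K:repeatagain} for $F \in \mathcal S_K$. By Definition~\ref{def:detectchangemultigeiseker} this is precisely the assertion that $\mathcal S_K$ detects the change of multi-Gieseker-stability for $\sigma(\cdot)$.

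The hard part is genuinely mild here: there is no deep obstruction, and the only thing that requires care is the elementary convexity-and-scaling argument that keeps $\gamma_t$ inside $p(\Cone(K))$ and realises $\gamma_{t'}$ as a non-negative combination of $p(L_0)$ and $p(L_1)$. Both of these rely entirely on the normalisations fixed at the start of Step~1 --- that $L_0, L_1 \in \Cone(K)$, and that after shrinking the distance between them the segment joining $p(L_0)$ and $p(L_1)$ lies in $p(\Cone(K))$ --- so the real content is making sure those hypotheses are genuinely in place before invoking them; the rest is linearity of the slope function and bookkeeping with the two definitions.
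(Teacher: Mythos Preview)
Your proof is correct and follows the same overall structure as the paper: first place $E$ in $\mathcal S'_K$ by showing $\gamma_t\in p(\Cone(K))$, then place $F$ in $\mathcal S_K$ by producing an $L\in\Cone(K)$ with $\mu^L(F)\ge\mu^L(E)$. Your scaling argument for $\gamma_t\in p(\Cone(K))$ is in fact more explicit than the paper's, which simply asserts this from the segment hypothesis without spelling out the rescaling (note $\sigma_0(t)+\sigma_1(t)\neq 1$ in general).

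The one place your argument genuinely diverges is in the second step. The paper exploits that $p\colon\Amp(X)_{\mathbb R}\to\Pos(X)_{\mathbb R}$ is a homeomorphism: since $\gamma_{t'}\in p(\Cone(K))$, one simply takes $L'=p^{-1}(\gamma_{t'})\in\Cone(K)$ and reads off $\mu^{L'}(F)=\mu_{\gamma_{t'}}(F)\ge\mu_{\gamma_{t'}}(E)=\mu^{L'}(E)$ directly. You instead use linearity of $\gamma\mapsto\mu_\gamma(F)-\mu_\gamma(E)$ to reduce to one of the endpoints $L_0$ or $L_1$. Your route is slightly more elementary (it does not invoke the homeomorphism), while the paper's is a line shorter; both are entirely valid and the difference is cosmetic.
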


This, and all subsequent lemmata  will be proved Section~\ref{subsect:proofslemmata} below.   The assumption that $L_0$ and $L_1$ are separated by a single wall of the first kind in particular implies they are general.  Thus, by Proposition \ref{prop:keymultigeiseker} we conclude that $\sigma(\cdot)$ is an open segment of stability parameters.   

 Now, since there is no reason to expect $\sigma(\cdot)$ is uniform,    we have to look more closely at what happens when a wall is crossed.    So, fix a wall $\overline{t}\in (0,1)$ associated to $\mathcal S_K$ and let $t_0$ and $t_1$ be general rational points in $(0,1)$ in the chamber immediately to the left and to the right of $\overline{t}$, respectively (the precise requirement for what it means for $t_0$ and $t_1$ to be general can be seen from the proof of Lemma \ref{lem:step2open} below).    So, to show that $\sigma(\cdot)$ is almost perfect with respect to $\mathcal S_{K}$ it remains to show that
\begin{equation}
 \mathcal M_{\sigma(t_0)} \leftrightdash \mathcal M_{\sigma(t_1)}.\label{eq:step1}
\end{equation}

\subsubsection{Step 2}
Fix $\overline{t},t_0,t_1$ as above, and also fix a large positive integer $a$ (to be determined below).  For $s\in [0,1]$ consider the formal sum of line bundles
\begin{equation}\label{eq:defB}
B_j(s) := \sigma_j(\overline{t}) \left( s L_j^{\frac{a\sigma_j(t_1)}{\sigma_j(\overline{t})}} + (1-s)L_j^{\frac{a\sigma_j(t_0)}{\sigma_j(\overline{t})}}\right) \quad j=1,2.
\end{equation}
Observe that as $\overline{t}\in (0,1)$ we have $\sigma_j(\overline{t})\neq 0$; moreover, for \eqref{eq:defB} to make sense we require that $a\sigma_j(t_i)/\sigma_j(\overline{t})\in \mathbb N$ for $i,j=0,1$, which certainly holds for arbitrarily large $a$.
Now define 
$$\eta(s) := (L_0,L_1; B_0(s), B_1(s))\text{ for } s\in [0,1],$$
which we observe is a stability segment, as for all $s\in [0,1]$ we have
\begin{equation*}
  \rank(B_0(s)) \vol(L_0) + \rank(B_1(s))\vol(L_1)= \sigma_{0}(\overline{t}) \vol(L_0) + \sigma_{1}(\overline{t})\vol(L_1)  =1.\label{eq:rankBi}
\end{equation*}

\begin{remark}
It may help the reader to observe, more explicitly, that the multi-Hilbert polynomial of $\eta(s)$ for a sheaf $E$ is given by
\begin{align*}
  P^{\eta(s)}_E(k) &= \frac{1-\overline{t}}{\vol(L_0)}s\chi(E\otimes L_0^k \otimes L_0^{\frac{a(1-t_{1})}{1-\overline{t}}})  + \frac{1-\overline{t}}{\vol(L_0)}(1-s)\chi(E\otimes L_0^k \otimes L_0^{\frac{a(1-t_{0})}{1-\overline{t}}})\nonumber\\
  &\quad +\frac{\overline{t}}{\vol(L_1)}s\chi(E\otimes L_1^k \otimes
  L_1^{\frac{a t_{1}}{\overline{t}}}) +
  \frac{\overline{t}}{\vol(L_1)}(1-s)\chi(E\otimes L_1^k \otimes
  L_1^{\frac{a t_{0}}{\overline{t}}}).
\end{align*}
\end{remark}
There are two reasons for this particular choice of twisting.  First, from Riemann-Roch one can verify that the part of the $k^2$ coefficient of $P_E^{\eta(s)}$ that depends on $E$ is independent of $s$ (this essentially follows as $\rank(B_j(s))$ is independent of $s$).   So, even though $\eta(\cdot)$ may not be a uniform segment of stability parameters,  it is closer to being uniform than $\sigma(\cdot)$ was.  Second, at the endpoints $s=0$ and $s=1$ we get back the stability with respect to $\sigma(t_0)$ and $\sigma(t_1)$, as made precise in the following lemma which is adapted from ideas of Matsuki-Wentworth \cite[Thm 4.1]{MatsukiWentworth}.  

\begin{lemma}\label{lem:step2endpoints}
For $a\in \N$ sufficiently large,  $\eta(0) \equiv \sigma(t_0)$ and $\eta(1) \equiv \sigma(t_1)$.
\end{lemma}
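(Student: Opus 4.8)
The plan is to compare the multi-Hilbert polynomials of $\eta(0)$ (resp.\ $\eta(1)$) and $\sigma(t_0)$ (resp.\ $\sigma(t_1)$) on the bounded family $\mathcal S_K$, and then invoke Lemma~\ref{lem:detectchangemultigieseker}. First I would record the explicit formula: at $s=0$ the twisting is $B_j(0) = \sigma_j(\overline t)\, L_j^{a\sigma_j(t_0)/\sigma_j(\overline t)}$, so for any torsion-free $E$,
\begin{equation*}
P_E^{\eta(0)}(k) = \sum_{j=0,1} \sigma_j(\overline t)\, \chi\bigl(E\otimes L_j^{k + a\sigma_j(t_0)/\sigma_j(\overline t)}\bigr),
\end{equation*}
whereas $P_E^{\sigma(t_0)}(k) = \sum_{j=0,1}\sigma_j(t_0)\,\chi(E\otimes L_j^k)$. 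Since $\sigma(\cdot)$ is a stability segment we have $\sigma_j(t_0) = \sigma_j(\overline t) + O(|t_0-\overline t|)$, but more to the point the leading behaviour in $k$ of $P_E^{\eta(0)}$ is governed by $L_j^k$ shifted by a constant, so dividing by the leading coefficient (which equals $\rank E$ by the stability-segment normalisation, independent of the twist) the reduced polynomials $p_E^{\eta(0)}$ and $p_E^{\sigma(t_0)}$ agree in their top coefficient; the point is to control the lower-order coefficients via Riemann--Roch.

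The key computation, which I would organise using the $\langle\langle\,\cdot\,\rangle\rangle$ notation of Section~\ref{subsubsect:GiesekerWalls}, is to expand $\hilb_i$ contributions for a subsheaf $F\subset E$ with $F\in\mathcal S_K$. For the difference $p^{\eta(0)}_{F\subset E}$ one gets a vector $\langle\langle h_1 \| h_2 \| h_3\rangle\rangle$ where each $h_i$ is a polynomial in the large parameter $a$ (coming from the shift $a\sigma_j(t_0)/\sigma_j(\overline t)$ in the exponent) with coefficients built from the intersection numbers $\hilb_\ell(F,E)\,c_1(L_j)^{3-\ell}$. I would show that the sign of $p^{\eta(0)}_{F\subset E}$, for $a$ large, is determined by the same lexicographic comparison that determines the sign of $p^{\sigma(t_0)}_{F\subset E}$: concretely, the highest power of $a$ appearing in the first non-vanishing $h_i$ has coefficient a positive multiple of $\beta^{L_j}_{F,i}$-type quantities evaluated along the segment, matching $p^{\sigma(t_0)}_{F\subset E}$ up to positive scaling. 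Since $\mathcal S_K$ is bounded there are only finitely many topological types of $F$, so a single $a$ works for all of them simultaneously; this is exactly the Matsuki--Wentworth trick (\cite[Thm.~4.1]{MatsukiWentworth}) of making the twist dominate. Once the sign comparison $p^{\eta(0)}_{F\subset E}(\le)0 \iff p^{\sigma(t_0)}_{F\subset E}(\le)0$ is established for all such $F$, Lemma~\ref{lem:S_kwitnesssigma} tells us $\mathcal S_K$ detects the change of multi-Gieseker-stability for both $\sigma(\cdot)$ and $\eta(\cdot)$ (the latter needs a small separate check, or one notes $\gamma_s^{\eta}$ stays in $p(\Cone(K))$ for the relevant range), and Lemma~\ref{lem:detectchangemultigieseker} gives $\eta(0)\equiv\sigma(t_0)$. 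The argument for $\eta(1)\equiv\sigma(t_1)$ is identical with $t_0$ replaced by $t_1$.

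The main obstacle I anticipate is the bookkeeping in the Riemann--Roch expansion: one must verify that after the exponent shift by $a\sigma_j(t_i)/\sigma_j(\overline t)$, the leading-in-$a$ term of each coefficient $h_i$ of $p^{\eta(0)}_{F\subset E}$ is a \emph{strictly positive} multiple of the corresponding coefficient of $p^{\sigma(t_0)}_{F\subset E}$ (rather than vanishing or flipping sign), uniformly over the finitely many types in $\mathcal S_K$. This is where the genericity hypotheses \eqref{eq:gen} on $L_0,L_1$ enter — they guarantee the relevant leading intersection numbers $\int_X c_1(L_j)^3 - \int_X c_1(L_{1-j})c_1(L_j)^2$ do not vanish, so no degenerate cancellation occurs and the dominant-$a$ comparison is clean. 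I would handle this by writing out the degree-$3$, $2$, $1$ parts of $\chi(E\otimes L_j^{k+c})$ as polynomials in $c$ and $k$ via Riemann--Roch, substituting $c = a\sigma_j(t_i)/\sigma_j(\overline t)$, and tracking only the top power of $a$ in each $k$-coefficient; the rest is routine once the non-degeneracy is in hand. Boundedness of $\mathcal S_K$ (already established) is what converts ``for $a$ large depending on $F$'' into ``for $a$ large, for all $F$ at once''.
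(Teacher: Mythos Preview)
Your overall structure is exactly that of the paper: reduce to showing $p^{\eta(0)}_{F\subset E}(\le)0 \iff p^{\sigma(t_0)}_{F\subset E}(\le)0$ for all saturated $F\subset E$ with $F\in\mathcal S_K$, expand via Riemann--Roch, and use boundedness of $\mathcal S_K$ to choose $a$ uniformly. However, two concrete points in your description are wrong and would cause the argument to break down if followed literally.

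First, the sign comparison is \emph{not} a pure ``dominant-$a$'' argument. Computing with \eqref{eq:h1} and \eqref{eq:ranksandcherns} one finds
\[
p^{\eta(0)}_{F\subset E} \;=\; \bigl\langle\bigl\langle\, h_1(\overline t)\;\big\|\; h_2(\overline t)+a\,h_1(t_0)\;\big\|\; h_3(\overline t)+a\,h_2(t_0)+\hilb_1(F,E)\cdot\epsilon \,\bigr\rangle\bigr\rangle,
\]
where $h_i(\cdot)$ is as in \eqref{eq:hit} and $\epsilon$ is a linear combination of $c_1(L_0)^2$ and $c_1(L_1)^2$. The leading coefficient $h_1(\overline t)$ does not involve $a$ at all, so you cannot force its sign by taking $a$ large; it is evaluated at $\overline t$, not at $t_0$. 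What is actually used is that $t_0$ lies in an open chamber immediately adjacent to the wall $\overline t$: if $h_1(\overline t)\neq 0$ then $h_1(t_0)$ has the same sign by continuity across the chamber, and if $h_1(\overline t)=0$ one drops to the second coefficient, where now the $a\,h_1(t_0)$ term does dominate. Moreover, once one reaches the case $h_1\equiv 0$ (using that $t_0$ is in an open chamber, so $h_1(t_0)=0$ forces $h_1\equiv 0$), the term $\hilb_1(F,E)\cdot\epsilon$ automatically vanishes as well, and the third coefficient is handled the same way. So the argument is a short case analysis interleaving the chamber--adjacency observation with the ``$a$ large'' step; the Matsuki--Wentworth trick only enters at the second and third slots, and only after adjacency has been invoked at the previous slot.

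Second, hypothesis \eqref{eq:gen} plays no role here. It is used only later, in the proof of Lemma~\ref{lem:finaltwist}, to construct the twisting for the third segment $\zeta(\cdot)$. The present lemma needs no non-degeneracy of the intersection numbers $\int_X c_1(L_j)^3-\int_X c_1(L_{1-j})c_1(L_j)^2$; the only genericity used is that $t_0$ sits in an open chamber adjacent to $\overline t$.
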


As a consequence, our goal becomes to show that $\eta(\cdot)$ is almost perfect with respect to $\mathcal S_K$,  for then $\mathcal M_{\sigma(t_0)} =\mathcal M_{\eta(0)}\leftrightdash \mathcal M_{\eta(1)} = \mathcal M_{\sigma(t_1)}$.  Towards this goal we start with:

\begin{lemma}\label{lem:step2open}\
\begin{enumerate}
\item $\mathcal S_K$ detects the change of multi-Gieseker-stability for $\eta(\cdot)$. \item For $a\in \mathbb N$ sufficiently large,  $\eta(\cdot)$ is an open segment of stability parameters.
\end{enumerate}
\end{lemma}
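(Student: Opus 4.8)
The plan is to handle the two parts separately: part~(1) is a near-verbatim repeat of Lemma~\ref{lem:S_kwitnesssigma}, while part~(2) needs a short Riemann--Roch computation of the Matsuki--Wentworth type.

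\emph{Part (1).} The point I would exploit is that \eqref{eq:defB} was arranged so that $\rank(B_j(s))=\sigma_j(\overline t)$ is independent of $s$; hence the curve class attached to $\eta(s)$ via \eqref{eq:defgamma} is the \emph{constant} class $\gamma:=\sigma_0(\overline t)c_1(L_0)^{d-1}+\sigma_1(\overline t)c_1(L_1)^{d-1}$, which is a positive multiple of a point $\gamma'$ of the segment $[p(L_0),p(L_1)]$; by the choices made in Step~1 this segment lies in $p(\Cone(K))$, so $\gamma'=p(L)$ for some $L\in\Cone(K)$. Since slope-(semi)stability and the slope inequality are insensitive to positive rescaling of the curve class, I may work with $\gamma'$ in place of $\gamma$. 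Now if $E$ of type $\tau$ is $\eta(s)$-semistable for some $s$, then by Riemann--Roch (Remark~\ref{rem:RRochIII}) $E$ is slope-semistable with respect to $\gamma'$, so $E\in\mathcal S'_K$; and if $F\subset E$ is saturated with $\mu_{\gamma_{s'}}(F)\ge\mu_{\gamma_{s'}}(E)$ for some $s'$, then $\gamma_{s'}=\gamma$ gives $\mu^{L}(F)=\mu_{\gamma'}(F)\ge\mu_{\gamma'}(E)=\mu^{L}(E)$, so $F\in\mathcal S_K$. That is exactly the condition of Definition~\ref{def:detectchangemultigeiseker}.

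\emph{Part (2).} By part~(1) the set $\mathcal S_K$ detects the change of multi-Gieseker-stability for $\eta(\cdot)$, so by Lemma~\ref{lem:detectchangemultigieseker} it suffices to prove that for every torsion-free $E$ of type $\tau$, every saturated $F\subset E$ with $F\in\mathcal S_K$, and all $s>0$ small enough, the lexicographic position of $p^{\eta(s)}_{F\subset E}$ relative to $0$ coincides with that of $p^{\eta(0)}_{F\subset E}$. Write $p^{\eta(s)}_{F\subset E}=\sum_{i=1}^{d}h^{\eta}_{F,i}(s)\tfrac{k^{d-i}}{(d-i)!}$; each $h^{\eta}_{F,i}$ is affine in $s$, and by boundedness of $\mathcal S_K$ there are only finitely many topological types of such $F$. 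The computation I would carry out, using again $\rank(B_j(s))=\sigma_j(\overline t)$ and the special twist exponents $a\sigma_j(t_0)/\sigma_j(\overline t)$, $a\sigma_j(t_1)/\sigma_j(\overline t)$ of \eqref{eq:defB}, shows that whenever $h^{\eta}_{F,1}\equiv\cdots\equiv h^{\eta}_{F,i-1}\equiv 0$ on $[0,1]$ one has
$$ h^{\eta}_{F,i}(s)=a\bigl(s\,\phi_i(t_1)+(1-s)\,\phi_i(t_0)\bigr)+c_{F,i}, $$
with $c_{F,i}$ independent of $s$ and $\phi_i$ an affine-linear function of the time variable, built from the lower intersection numbers $\hilb_{\le i-1}(F,E)\,c_1(L_j)^{\bullet}$, satisfying $\phi_i(\overline t)=0$ --- the vanishing at $\overline t$ being forced by $h^{\eta}_{F,i-1}\equiv 0$ (and for $i=1$ one simply has $h^{\eta}_{F,1}=\phi_1(\overline t)$, already constant in $s$). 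Since $\phi_i$ is affine with a zero at $\overline t$ and $t_0\neq\overline t$, either $\phi_i\equiv 0$, in which case $h^{\eta}_{F,i}$ is constant in $s$, or $\phi_i(t_0)\neq 0$, in which case $h^{\eta}_{F,i}(0)=a\phi_i(t_0)+c_{F,i}\neq 0$ once $a$ is large enough (uniformly over the finitely many $F$, since $|\phi_i(t_0)|$ is then bounded below while $|c_{F,i}|$ is bounded above). Iterating over $i=1,\dots,d$: for each $F$, either $p^{\eta(s)}_{F\subset E}\equiv 0$ for all $s$, or its first non-vanishing coefficient is nonzero at $s=0$ and so keeps its sign for $s$ near $0$; either way the lexicographic position of $p^{\eta(s)}_{F\subset E}$ is locally constant at $s=0$, giving $\eta(s)\equiv\eta(0)$. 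Repeating the argument with $t_0,s$ replaced by $t_1,1-s$ (using $t_1\neq\overline t$) gives $\eta(1-s)\equiv\eta(1)$, so $\eta(\cdot)$ is open.

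The main obstacle is the displayed formula: one must verify that under $h^{\eta}_{F,<i}\equiv 0$ the full $s$-dependence of $h^{\eta}_{F,i}(s)$ is carried by one affine function $\phi_i$ of the time variable that vanishes at $\overline t$. This is precisely the ``closer to uniform'' phenomenon advertised before Lemma~\ref{lem:step2endpoints}: constancy of $\rank(B_j(s))$ kills the $s$-dependence of the $k^{d-1}$-coefficient outright, and the chosen twist exponents funnel all remaining $s$-dependence into a term of the form $a\,\sigma_j(t_\bullet)\cdot(\text{lower intersection numbers})$, which is affine in $t_\bullet$ and vanishes at $\overline t$ once the preceding coefficient vanishes identically. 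With that in hand the rest is the lexicographic bookkeeping above together with one use of boundedness of $\mathcal S_K$ to choose $a$ uniformly.
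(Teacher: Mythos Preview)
Your treatment of part~(1) is correct and matches the paper's proof exactly: the key observation is that $\rank(B_j(s))=\sigma_j(\overline t)$ is independent of $s$, so the curve class attached to $\eta(\cdot)$ is the constant $\gamma_{\overline t}\in p(\Cone(K))$, after which the argument of Lemma~\ref{lem:S_kwitnesssigma} applies verbatim.

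Part~(2) has the right architecture but contains a genuine gap. Your displayed formula
\[
h^{\eta}_{F,i}(s)=a\bigl(s\,\phi_i(t_1)+(1-s)\,\phi_i(t_0)\bigr)+c_{F,i}
\]
is \emph{not} what Riemann--Roch actually produces for $i=3$. Computing with \eqref{eq:h1} and \eqref{eq:ranksandcherns}, one finds (in the paper's notation \eqref{eq:ui}) that
\[
u_3(s)=h_3(\overline t)+a\,h_2(st_1+(1-s)t_0)+a^2\epsilon(s),
\]
where $\epsilon(s)$ comes from $\ch_2(B_j(s))$ and involves $\sigma_j(t_\bullet)^2$; this is \emph{quadratic} in the time variable, not affine, and carries a power of $a^2$. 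So the $s$-dependence of $u_3$ is not funnelled into a single affine $\phi_3$ as you claim, and your ``obstacle'' paragraph does not address this term at all.

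There are two ways to close the gap. The paper's route is to show directly for each $i$ that $u_i(0)=0$ forces $u_i\equiv 0$: for $i=3$ one argues that $u_3(0)=0$ with $a$ large forces $\epsilon(0)=0$, and then uses that $L_0$ lies on no wall of $\tilde{\mathcal W}_K$ together with the \emph{genericity of $t_0$ in its chamber} (this is where that hypothesis is used) to deduce $\epsilon\equiv 0$, after which $h_2(t_0)=0$ and hence $h_2\equiv 0$. Alternatively, your inductive hypothesis $u_1=u_2\equiv 0$ can be made to do the same work without invoking genericity of $t_0$: from the $s$-dependence of $u_2$ one gets $h_1(t_0)=h_1(t_1)$, so $h_1$ is constant, and combined with $u_1=h_1(\overline t)=0$ this gives $h_1\equiv 0$; since $\sigma_0,\sigma_1$ are linearly independent this forces $\hilb_1(F,E)c_1(L_j)^2=0$ for $j=0,1$, whence $\epsilon\equiv 0$. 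Only then does your formula for $u_3$ hold (with $\phi_3=h_2$, $c_{F,3}=h_3(\overline t)$, and $\phi_3(\overline t)=h_2(\overline t)=0$ from the constant part of $u_2\equiv 0$). Either argument is short, but one of them must be supplied; as written, your proof asserts the crucial formula without justification precisely at the point where the nontrivial $\ch_2$-contribution appears.
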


Now, fix a multi-Gieseker wall $\overline{s}\in (0,1)$ for $\eta(\cdot)$ associated to $\mathcal S_K$ and let $s_0$ and $s_1$ be points in $(0,1)$ contained in adjacent chambers located immediately either side of this wall.  So, to prove $\eta(\cdot)$ is almost perfect with respect to $\mathcal S_{K}$, it remains to prove 
\begin{equation}
 \mathcal M_{\eta(s_0)} \leftrightdash \mathcal M_{\eta(s_1)}.\label{eq:step2}
\end{equation}

\subsubsection{Step 3}
Consider a fixed $\overline{s}, s_0$ and $s_1$ as above.  The third, and final, stability segment that we need is built from the following elementary, but tedious, construction:

\begin{lemma}\label{lem:finaltwist}
For sufficiently large $\lambda\in \mathbb R$ and all $b\in \mathbb R_{>0}$, for $r\in [0,1]$ there exist formal sums of line bundles $D_1(r)$ and $D_2(r)$ whose coefficients are linear in $r$ and strictly positive for $r\in (0,1)$ such that the following hold:
\begin{enumerate}
\item $\rank(D_j(r)) = \rank (B_j(\overline{s}))$ and $c_1(D_j(r)) = c_1(B_j(\overline{s}))$  (and so in particular both are independent of $r$).
\item For $i=0,1$ we have
$$\sum_{j=0}^1 \ch_2(D_j(i)) = 
b \sum_{j=0}^1 \rank (B_j(\overline{s})) \left(\lambda + \frac{a(s_i-\overline{s})(\sigma_j(t_1)-\sigma_j(t_0))}{\sigma_j(\overline{t})}\right) c_1(L_j)^2 + \sum_{j=0}^1 \ch_2(B_j(\overline{s}))$$
\item The quantity $\sum_{j=0}^1 c_1(L_j) \ch_2(D_j(r))$ is independent of $r$.
\end{enumerate}
\end{lemma}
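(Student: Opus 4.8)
The plan is to take each $D_j(r)$ (for $j=0,1$) to be a formal, non-negative combination of integral powers $L_j^{e}$ of the \emph{single} line bundle $L_j$, with coefficients depending affine-linearly on $r$ and a common (finite) set of exponents independent of $r$. Then $\ch_i(D_j(r))$ automatically lies in the line $\R\cdot c_1(L_j)^i$, and all three requirements reduce to an elementary moment problem solved separately for each $j$, together with one global numerical identity.

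First I would translate (1)--(3) into scalar conditions. Recording that $\rank B_j(\overline s)=\sigma_j(\overline t)$ is independent of $s$, writing $c_1(B_j(\overline s))=M_j c_1(L_j)$ with $M_j=a\bigl(\overline s\,\sigma_j(t_1)+(1-\overline s)\sigma_j(t_0)\bigr)>0$ and $\ch_2(B_j(\overline s))=q_j c_1(L_j)^2$, I would look for $D_j(r)=\sum_k d_{jk}(r)L_j^{e_{jk}}$ with $d_{jk}$ linear in $r$, $d_{jk}(0),d_{jk}(1)\ge 0$, subject to
\[
\sum_k d_{jk}(r)=\sigma_j(\overline t),\qquad \sum_k d_{jk}(r)\,e_{jk}=M_j\quad\text{for all }r,
\]
which yields (1), and, for $i=0,1$,
\[
\tfrac12\sum_k d_{jk}(i)\,e_{jk}^2 = P_j(i):=b\,\sigma_j(\overline t)\Bigl(\lambda+\tfrac{a(s_i-\overline s)(\sigma_j(t_1)-\sigma_j(t_0))}{\sigma_j(\overline t)}\Bigr)+q_j,
\]
which makes $\ch_2(D_j(i))$ equal to exactly the $c_1(L_j)^2$-coefficient on the right-hand side of (2), so that summing over $j$ gives (2). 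Since $d_{jk}$ is linear, so is $P_j(r):=\tfrac12\sum_k d_{jk}(r)e_{jk}^2$, and $P_j(1)-P_j(0)=ab(s_1-s_0)(\sigma_j(t_1)-\sigma_j(t_0))$.

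Next I would verify (3). With the above, $\ch_2(D_j(r))=P_j(r)c_1(L_j)^2$, hence $\sum_j c_1(L_j)\ch_2(D_j(r))=\sum_j P_j(r)c_1(L_j)^3$, whose coefficient of $r$ is the $0$-cycle class $ab(s_1-s_0)\sum_j(\sigma_j(t_1)-\sigma_j(t_0))c_1(L_j)^3$. Its degree is $ab(s_1-s_0)\sum_j(\sigma_j(t_1)-\sigma_j(t_0))\vol(L_j)=0$, because $\sum_j\sigma_j(t)\vol(L_j)=1$ for every $t$ by the very definition of a stability segment; and a $0$-cycle of degree zero on the connected threefold $X$ is trivial modulo algebraic equivalence. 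Hence $\sum_j c_1(L_j)\ch_2(D_j(r))$ does not depend on $r$, which is (3). This step uses crucially the identity $\rank B_j(\overline s)=\sigma_j(\overline t)$.

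Finally I would solve the moment problem. For fixed $j$ and $i\in\{0,1\}$ I need non-negative weights on a fixed finite set of integer exponents with prescribed zeroth, first and second moments $\sigma_j(\overline t)$, $M_j$ and $2P_j(i)$; by convexity such weights exist as soon as $2P_j(i)$ exceeds the minimal attainable second moment $M_j^2/\sigma_j(\overline t)$ by a definite margin, realised for instance by a small cluster of exponents straddling $M_j/\sigma_j(\overline t)$ together with one auxiliary exponent $N=N(\lambda)$ taken large. Since $P_j(i)=b\,\sigma_j(\overline t)\lambda+O(1)$ uniformly in $i$ (and $r\in[0,1]$ is compact), this holds once $\lambda$ is large relative to $b^{-1}$; I would use the same exponent set at $i=0$ and $i=1$, solve for the weights at each endpoint, interpolate linearly in $r$, and discard any term vanishing at both endpoints, after which strict positivity of the surviving coefficients on $(0,1)$ is automatic from linearity. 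The conceptual heart is the degree computation in (3); the main labour, and the part most in need of care, is the positivity bookkeeping for the moment problem and the linear interpolation, but I do not expect a genuine obstacle there.
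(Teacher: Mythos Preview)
Your approach is correct and genuinely different from the paper's. The paper builds $D_j(r)=B_j(\overline s)\otimes\bigl((1-r)C_{j0}+rC_{j1}\bigr)$ where each $C_{ji}$ has $\ch_2(C_{ji})=\alpha_{ji0}c_1(L_0)^2+\alpha_{ji1}c_1(L_1)^2$ mixing \emph{both} $L_0$ and $L_1$; the coefficients $\alpha_{jik}$ are produced by a small linear-algebra lemma whose solvability hinges on the genericity hypothesis $\int_X c_1(L_0)^3\neq\int_X c_1(L_1)c_1(L_0)^2$ (and its symmetric counterpart). Condition~(3) is then forced by that linear system. By contrast, you keep $D_j(r)$ inside the line $\R\cdot c_1(L_j)^2$ at the $\ch_2$ level and obtain~(3) for free from the normalisation $\sum_j\sigma_j(t)\vol(L_j)=1$ of a stability segment. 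This is cleaner and, notably, does not use the genericity assumption at all.

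Two small points. First, your remark that the moment problem is solvable ``once $\lambda$ is large relative to $b^{-1}$'' understates the situation: since $B_j(\overline s)$ is itself a non-negative combination of powers of $L_j$, Jensen gives $2q_j\sigma_j(\overline t)>M_j^2$ strictly (because $\sigma_j(t_0)\neq\sigma_j(t_1)$), so the inequality $\sigma_j(\overline t)\cdot 2P_j(i)>M_j^2$ already holds at $b=0$ and persists for all $b>0$ once $\lambda>-\min_{i,j}q_{ji}$; the exponent set may depend on $b$, which is permitted. The most transparent realisation is $D_j(r)=B_j(\overline s)\otimes C_j(r)$ with $C_j(r)=\alpha_j(r)L_j^{N}+\alpha_j(r)L_j^{-N}+(1-2\alpha_j(r))\mathcal O_X$ and $\alpha_j(r)=b(\lambda+(1-r)q_{j0}+rq_{j1})/N^2$, choosing $N$ large for given $b$. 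Second, the appeal to ``$0$-cycles modulo algebraic equivalence'' in your verification of~(3) is harmless but unnecessary: in the paper's convention the quantity in~(3) is already a real number (the integration is suppressed), and you have shown that number is constant in $r$.
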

Here, we have extended the definition of Chern character linearly to formal sum of line bundles; so, if $A_{ji}$ are line bundles, $\ch(\sum_{i} a_{ji} A_{ji}) := \sum_i a_{ji} \ch(A_{ji})\in H^*(X,\mathbb R)$. In particular, $\rank(\sum_i a_{ji} A_{ji}) = \sum_i a_{ji}$ as defined before.

Granted this, consider
$$ \zeta(r) :=  (L_0,L_1; D_0(r), D_1(r)) \text{ for }r\in [0,1].$$
\begin{lemma}\label{lem:step3endpoints}\
  \begin{enumerate}
  \item $\mathcal S_K$ detects the change of multi-Gieseker-stability for $\zeta(\cdot)$,
  \item $\zeta(\cdot)$ is a bounded and uniform segment of stability parameters, and \item for $b\in \R_{>0}$ sufficiently large, $\zeta(0) \equiv \eta(s_0)$ and $\zeta(1) \equiv \eta(s_1)$.
\end{enumerate}
\end{lemma}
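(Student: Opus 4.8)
The plan is to dispatch (1) and (2) by the same routine that proved Lemmas~\ref{lem:S_kwitnesssigma} and~\ref{lem:step2open} together with a Riemann--Roch computation, and to treat (3) by a Matsuki--Wentworth argument \cite[Thm.~4.1]{MatsukiWentworth}: reduce to a coefficient-by-coefficient comparison over the finitely many types in $\mathcal S_K$, and exploit the designed form of the twist $D_j$ in Lemma~\ref{lem:finaltwist}.

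\textbf{Parts (1) and (2).} By Lemma~\ref{lem:finaltwist}(1) the ranks $\rk(D_j(r)) = \rk(B_j(\overline{s})) = \sigma_j(\overline{t})$ are independent of $r$, so the curve class attached to $\zeta(r)$ is the fixed class $\gamma_{\overline{t}} := \sum_j \sigma_j(\overline{t})c_1(L_j)^2$. Being a positive combination of $c_1(L_0)^2$ and $c_1(L_1)^2$, this is a positive multiple of a point on the segment $[p(L_0),p(L_1)]$, which by the reductions of Step~1 lies in $p(\Cone(K))$, and $p(\Cone(K))$ is a cone; so $\gamma_{\overline{t}} = p(L^\ast)$ for some $L^\ast \in \Cone(K)$. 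By Remark~\ref{rem:RRochIII} any $\zeta(r)$-semistable sheaf of type $\tau$ is slope-semistable with respect to $\gamma_{\overline{t}}$, hence lies in $\mathcal S'_K$, and any saturated $F \subset E$ with $\mu_{\gamma_{\overline{t}}}(F) \ge \mu_{\gamma_{\overline{t}}}(E)$ has $\mu^{L^\ast}(F) \ge \mu^{L^\ast}(E)$ and so lies in $\mathcal S_K$; this is (1). For (2), boundedness is immediate since every such sheaf lies in the bounded set $\mathcal S'_K$ (\Iref{Theorem}{thm:K^+}); that $\zeta(\cdot)$ is a stability segment follows from $\sum_j \rk(D_j(r))\vol(L_j) = \sum_j \sigma_j(\overline{t})\vol(L_j) = 1$ and the linearity and positivity of the coefficients of $D_j(r)$; and uniformity is a Riemann--Roch computation via~\eqref{eq:rrochinter}: the $k^2$-coefficient of $p^{\zeta(r)}_E$ depends on $r$ only through $\rk(D_j(r))$ and $c_1(D_j(r))$, which are $r$-independent by Lemma~\ref{lem:finaltwist}(1), the $k^1$-coefficient additionally only through $\sum_j c_1(L_j)\ch_2(D_j(r))$, which is $r$-independent by Lemma~\ref{lem:finaltwist}(3), while the constant term, involving $\ch_2(D_j(r))$ and $\ch_3(D_j(r))$, is linear in $r$ since the coefficients of $D_j(r)$ are.

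\textbf{Part (3), the generic descent.} Since $\zeta(0)$ and $\eta(s_0)$ both have curve class $\gamma_{\overline{t}} \in p(\Cone(K))$, Lemma~\ref{lem:mathcalSrelevance} reduces $\zeta(0) \equiv \eta(s_0)$ to showing that, for each of the finitely many types of saturated $F \subset E$ with $F \in \mathcal S_K$, the lexicographic signs of $p^{\zeta(0)}_{F\subset E}$ and $p^{\eta(s_0)}_{F\subset E}$ agree. Riemann--Roch shows these polynomials have the same $k^2$-coefficient, a positive multiple of $\sum_j \sigma_j(\overline{t})\hilb_1(F,E)c_1(L_j)^2$; if it is non-zero, done. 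Otherwise one descends: for $a$ large the $k^1$-coefficient of $p^{\eta(s)}_{F\subset E}$ is an affine function $h_{F,2}(s)$ whose (at most one) root is a multi-Gieseker wall of $\eta(\cdot)$, and by the uniformity of $\zeta(\cdot)$ the $k^1$-coefficient of $p^{\zeta(0)}_{F\subset E}$ equals $h_{F,2}(\overline{s})$ while that of $p^{\eta(s_0)}_{F\subset E}$ equals $h_{F,2}(s_0)$. As $\overline{s}$ is a single $\eta$-wall with $s_0$ in the adjacent chamber, there is no $\eta$-wall strictly between $\overline{s}$ and $s_0$, so $h_{F,2}(\overline{s})$ and $h_{F,2}(s_0)$ have equal sign unless $h_{F,2}(\overline{s})=0$; in that case $h_{F,2}(s_0) = a(s_0-\overline{s})(A_1-A_0)$ with $A_i := \sum_j \sigma_j(t_i)\hilb_1(F,E)c_1(L_j)^2$, while $p^{\zeta(0)}_{F\subset E}$ drops to its constant term, which by the form of $\sum_j \ch_2(D_j(0))$ prescribed by Lemma~\ref{lem:finaltwist}(2) equals $b\,a(s_0-\overline{s})(A_1-A_0)$ plus a quantity bounded independently of $b$; hence for $b$ large it has the sign of $(s_0-\overline{s})(A_1-A_0)$, matching $h_{F,2}(s_0)$.

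\textbf{Part (3), the delicate case and the obstacle.} The remaining case $h_{F,2}\equiv 0$ is where the hypothesis of being separated by a single wall of the first kind enters. Then $A_0=A_1=0$; since the $2\times2$ matrix in $\sigma_j(t_i)$ is invertible, $\hilb_1(F,E)c_1(L_0)^2 = \hilb_1(F,E)c_1(L_1)^2 = 0$, and as $L_0,L_1$ lie on no wall of $\tilde{\mathcal W}_K$ (condition~(1) of Definition~\ref{def:separatedwallfirstkind}), the wall $\tilde{W}_{F,1}$ is trivial, so $\hilb_1(F,E)=0$ as a class; in particular the quadratic-in-$a$ terms drop out of the constant coefficients of both polynomials. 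Now condition~(2) of Definition~\ref{def:separatedwallfirstkind} (the segment $[L_0,L_1]$ meets no wall of the second kind) forces $\hilb_2(F,E)c_1(L)$ to have constant sign along that segment, hence $\sum_j\sigma_j(t_0)\hilb_2(F,E)c_1(L_j)$ and $\sum_j\sigma_j(t_1)\hilb_2(F,E)c_1(L_j)$ to have the same sign, so that, for $a$ large, the constant coefficient of $p^{\eta(s)}_{F\subset E}$ — now the relevant leading coefficient and affine in $s$ — has constant sign on $[0,1]$; thus $\overline{s}$ is not one of its roots and the final descent is again sign-preserving. The symmetric argument with $i=1$ in Lemma~\ref{lem:finaltwist}(2) gives $\zeta(1)\equiv\eta(s_1)$. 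The hard part will be organising this multi-level case analysis so that the two large-parameter inputs interlock correctly — $a$ (fixed back in Step~2, but needing to be large enough) controlling the second-kind degeneracies and $b$ controlling the crossings at $\overline{s}$ — and checking that the ``single wall of the first kind'' hypothesis is invoked exactly at the points where the degenerate subsheaves would otherwise make the two sign patterns diverge.
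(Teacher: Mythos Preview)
Your proposal is essentially correct and follows the same route as the paper: for (1) and (2) you reproduce exactly the paper's argument, and for (3) you carry out the same Matsuki--Wentworth--style descent through the coefficients $u_1, u_2(\overline{s}), u_2(s_0)$, invoking the ``no walls of the second kind'' hypothesis at the final step. Two small points to tighten. First, the assertion ``$\hilb_1(F,E)=0$ as a class'' does not follow from $\tilde W_{F,1}$ being trivial: that only gives $\hilb_1(F,E).L^2=0$ for all ample $L$, which on a threefold need not force numerical triviality. Fortunately you only use the consequence $\hilb_1(F,E).c_1(L_j)^2=0$, which \emph{does} follow and is all that is needed to kill the $a^2\epsilon(s)$ term. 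Second, in your delicate case you argue that $u_3$ has constant sign on $[0,1]$ for $a$ large; the paper instead observes that $u_2\equiv 0$ together with $h_1\equiv 0$ forces $h_2(\overline{t})=0$, and then the ``no second-kind wall'' hypothesis gives $h_2\equiv 0$ outright, so $u_3(s)=h_3(\overline{t})$ is actually \emph{constant} in $s$ --- no further appeal to largeness of $a$ is needed. Your version works too (indeed, the case you treat where $h_2$ has constant nonzero sign is vacuous once one notices $h_2(\overline{t})=0$), but the paper's observation removes the need to feed this extra constraint back into the choice of $a$ in Step~2.
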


\begin{lemma}\label{lem:step3open}
For $b \in \R_{>0}$ sufficiently large, $\zeta(\cdot)$ is open, and thus perfect.
\end{lemma}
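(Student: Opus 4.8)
The plan is to handle the two assertions in turn, observing that the second --- ``thus perfect'' --- is quick once the first is in hand. By Lemma~\ref{lem:step3endpoints} the segment $\zeta(\cdot)$ is bounded and uniform and $\mathcal S_K$ detects the change of multi-Gieseker-stability for it, so condition~(1) of Definition~\ref{def:perfect} holds, and --- once $\zeta(\cdot)$ is known to be open --- so does condition~(2). For condition~(3) I would feed the bounded uniform segment $\zeta(\cdot)$ into Theorem~\ref{thm:intermediate}, obtaining $\mathcal M_{\zeta(t_0)}\leftrightdash\mathcal M_{\zeta(t_1)}$ for all rational $t_0,t_1\in(0,1)$, and then use Corollary~\ref{cor:detectmultiGieseker} to move an arbitrary interior point (a wall included) to a rational point of the same chamber; hence~(3) holds for \emph{all} $t_0,t_1\in(0,1)$, so $\zeta(\cdot)$ is in fact perfect as soon as it is open. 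It remains to prove openness.

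By Lemma~\ref{lem:detectchangemultigieseker}, and since $\mathcal S_K$ detects the change of multi-Gieseker-stability for $\zeta(\cdot)$, it suffices to show that for every torsion-free $E$ of type $\tau$ and every saturated $F\subset E$ with $F\in\mathcal S_K$ the lexicographic sign of $p_{F\subset E}^{\zeta(r)}$ does not change as $r$ leaves $0$ and as $r$ leaves $1$. Writing $p_{F\subset E}^{\zeta(r)}(k)=h_{F,1}\frac{k^{2}}{2}+h_{F,2}k+h_{F,3}(r)$, uniformity of $\zeta(\cdot)$ makes $h_{F,1},h_{F,2}$ independent of $r$ and $h_{F,3}(\cdot)$ affine, and boundedness of $\mathcal S_K$ confines all of this data to a finite set. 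Thus the only dangerous $F$ are those with $h_{F,1}=h_{F,2}=0$ for which $h_{F,3}$, while not identically zero, vanishes at $r=0$ or $r=1$, and I must exclude these once $b$ is large.

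The key is a short list of Riemann--Roch identities supplied by Lemma~\ref{lem:finaltwist}. Put $\gamma^{*}:=\sum_j\sigma_j(\overline t)c_1(L_j)^{2}=\gamma_{\overline t}$ (using $\rank(B_j(\overline s))=\sigma_j(\overline t)$) and $\Delta:=\frac{c_1(L_1)^{2}}{\vol(L_1)}-\frac{c_1(L_0)^{2}}{\vol(L_0)}$, so $\gamma_{t_1}-\gamma_{t_0}=(t_1-t_0)\Delta$. Since $D_j(r)$ agrees with $B_j(\overline s)$ in rank and in $c_1$, one finds that $h_{F,1}$ is a fixed positive multiple of $\hilb_1(F,E)\cdot\gamma^{*}$ and that $h_{F,2}=g^{\eta}_{F,2}(\overline s)$, where $g^{\eta}_{F,\bullet}$ denotes the coefficient functions of the segment $\eta(\cdot)$; and from the explicit affine form of $\sum_j\ch_2(D_j(r))$ in Lemma~\ref{lem:finaltwist}(2) one gets
$$h_{F,3}(r)=g^{\eta}_{F,3}(\overline s)+b\,\hilb_1(F,E)\cdot\Bigl(\lambda\gamma^{*}+a\,(s(r)-\overline s)(t_1-t_0)\,\Delta\Bigr),\qquad s(r):=(1-r)s_0+rs_1,$$
in which the entire $b$-dependence is the displayed one and $g^{\eta}_{F,3}(\overline s)$ is independent of $b$. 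For a dangerous $F$ one has $h_{F,1}=0$, i.e.\ $\hilb_1(F,E)\cdot\gamma^{*}=0$, so the $\lambda$-term drops and $h_{F,3}(r)=g^{\eta}_{F,3}(\overline s)+ab(t_1-t_0)(s(r)-\overline s)\,\hilb_1(F,E)\cdot\Delta$. Here is where the hypothesis that $L_0,L_1$ are separated by a single wall of the first kind enters: since $\gamma^{*}=\gamma_{\overline t}$ and $t_0<\overline t<t_1$, the relation $\hilb_1(F,E)\cdot\gamma_{\overline t}=0$ forces, by linearity of $t\mapsto\hilb_1(F,E)\cdot\gamma_t$, either $\hilb_1(F,E)\cdot\gamma_t\equiv0$ (equivalently $\hilb_1(F,E)\cdot\Delta=0$) or that $\hilb_1(F,E)\cdot\gamma_{t_0}$ and $\hilb_1(F,E)\cdot\gamma_{t_1}$ have strictly opposite signs; and, $L_0,L_1$ lying off all walls and $\overline t$ being the one first-kind wall between them, the latter says precisely that $F$ witnesses that wall, in particular $\hilb_1(F,E)\cdot\Delta\ne0$. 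In the first case $h_{F,3}(\cdot)$ is constant in $r$, hence has no zero at all; in the second case $h_{F,3}(\cdot)$ is a non-constant affine function of $r$ (as $s_0\ne s_1$, $t_0\ne t_1$, $\hilb_1(F,E)\cdot\Delta\ne0$), and since $s_0\ne\overline s\ne s_1$ while $g^{\eta}_{F,3}(\overline s)$ stays bounded as $b\to\infty$, for $b$ large --- uniformly over the finitely many topological types in $\mathcal S_K$ --- we get $h_{F,3}(0)\ne0$ and $h_{F,3}(1)\ne0$, so the single zero of $h_{F,3}$ lies in the open interval $(0,1)$, where it is an admissible interior wall. Either way, for $b$ large no $F\in\mathcal S_K$ changes the lexicographic sign of $p_{F\subset E}^{\zeta(r)}$ at $r=0$ or $r=1$, so Lemma~\ref{lem:detectchangemultigieseker} yields $\zeta(0)\equiv\zeta(t)$ and $\zeta(1)\equiv\zeta(1-t)$ for all small $t>0$: thus $\zeta(\cdot)$ is open, hence perfect.

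The step I expect to require the most care is the Riemann--Roch bookkeeping underlying the three identities --- in particular checking that the $b\lambda\gamma^{*}$ contribution separates off cleanly and vanishes exactly on the dangerous locus $\{\hilb_1(F,E)\cdot\gamma^{*}=0\}$, and that $h_{F,1},h_{F,2}$ and the $b$-free part of $h_{F,3}$ do not secretly depend on $b$ --- together with making fully precise the passage from ``separated by a single wall of the first kind'' to the clean dichotomy used above.
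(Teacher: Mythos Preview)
Your proof is correct and follows essentially the same route as the paper. Both arguments reduce to showing that when the first two coefficients $h_{F,1}=u_1$ and $h_{F,2}=u_2(\overline{s})$ vanish, the third coefficient $h_{F,3}(r)=u_3(\overline{s})+\delta(r)$ either is constant in $r$ or does not vanish at the endpoints for $b$ large. The paper writes $\delta(r)=b[\lambda u_1+(1-r)u_2(s_0)+ru_2(s_1)-u_2(\overline{s})]$ and argues that $h_{F,3}(0)=0$ forces $u_2(s_0)=0$, whence $u_2\equiv 0$ by linearity (two distinct roots $\overline{s}$ and $s_0$), whence $\delta\equiv 0$; you instead unpack $\delta(r)$ as $b\,\hilb_1(F,E)\cdot\bigl(\lambda\gamma^{*}+a(s(r)-\overline{s})(t_1-t_0)\Delta\bigr)$ and run the dichotomy on whether $\hilb_1(F,E)\cdot\Delta$ vanishes. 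These are the same computation in different coordinates, and your explicit isolation of the $b$-linear term makes the ``$b$ large'' step transparent.

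One small over-attribution: you invoke the hypothesis that $L_0,L_1$ are separated by a single wall of the first kind to establish your Case~1/Case~2 dichotomy, but it is not needed here. Once $u_1=h_1(\overline{t})=0$, linearity of $t\mapsto h_1(t)=\hilb_1(F,E)\cdot\gamma_t$ alone gives either $h_1\equiv 0$ (equivalently, slope $\hilb_1(F,E)\cdot\Delta=0$) or $\hilb_1(F,E)\cdot\Delta\neq 0$; no appeal to the wall structure is required. The paper's proof of this lemma makes no use of the single-wall hypothesis; that hypothesis enters only through Lemma~\ref{problem}, which feeds into the proof of Lemma~\ref{lem:step3endpoints}.
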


Thus, by Theorem \ref{thm:intermediate} the moduli spaces defined by the endpoints of $\zeta$ are related by a finite number of Thaddeus-flips through moduli spaces of sheaves.  That is,
$$ \mathcal M_{\eta(s_0)} = \mathcal M_{\zeta(0)} \leftrightdash \mathcal M_{\zeta(1)} = \mathcal M_{\eta(s_1)}.$$
This proves \eqref{eq:step2}, which in turn proves \eqref{eq:step1}, which finally proves Theorem \ref{thm:3intermediate}.

\subsection{Proofs of the lemmata} \label{subsect:proofslemmata}
In this section, we prove Theorem~\ref{thm:3intermediate} by establishing the lemmata stated in the previous section. The subsequent proofs involve Riemann-Roch computations that for convenience we state here. 

 Suppose that $$\sigma = (L_0,L_1;B_0,B_1),$$
where $B_i$ is a formal sum of line bundles as in \eqref{eq:formalsum} (whose coefficients could depend on a parameter) and assume
$$ \rank(B_0) \vol(L_0) + \rank(B_1) \vol(L_1) =1.$$
Then, by Riemann-Roch, if $E$ is a non-trivial torsion-free sheaf on our threefold $X$, the multiplicity of $E$ is $r^{\sigma}_E = \rank (E)$, and if $F\subset E$ is a non-trivial subsheaf, then
\begin{equation*}
   p^\sigma_{F\subset E} = \langle\langle h_1||h_2||h_3\rangle\rangle,
\end{equation*}
where 
\begin{equation} \label{eq:h1}
\begin{aligned}
h_1 &= \hilb_1(F,E).\sum \rank(B_j) c_1(L_j)^2,\\
h_2 &= \hilb_2(F,E).\sum \rank(B_j)c_1(L_j) +\hilb_1(F,E).\sum c_1(L_j) c_1(B_j),\\
h_3 &= \hilb_3(F,E). \sum \rank(B_j) + \hilb_2(F,E) .\sum c_1(B_j)+ \hilb_1(F,E).\sum \ch_2(B_j),
\end{aligned}
\end{equation}
the sums being over $j=0,1$.

\begin{proof}[Proof of Lemma \ref{lem:S_kwitnesssigma}]
We have to show $\mathcal S_K$ detects the change of multi-Gieseker-stability for $\sigma(\cdot)$.  By definition $\sigma(t) = (L_0,L_1;\sigma_0(t), \sigma_1(t)) = (L_0,L_1;\sigma_0(t)\mathcal O_X, \sigma_1(t) \mathcal O_X)$.  Thus, as in \eqref{eq:defgamma},  we must consider the curve class
\begin{equation}
 \gamma_t = \sigma_0(t) c_1(L_0)^2 + \sigma_1(t) c_1(L_1)^2 \text{ for } t\in [0,1].\label{eq:defgammat}
\end{equation}
Observe that since we assumed the line segment between $p(L_0)$ and $p(L_1)$ lies completely within $p(\Cone(K))$, we have that $\gamma_t\in p(\Cone(K))$ for all $t\in [0,1]$. 

Suppose that $E$ is of topological type $\tau$ and semistable with respect to $\sigma(t)$ for some $t\in [0,1]$.  Then, it is slope semistable with respect to $\gamma_t$, so $E\in \mathcal S'_K$ (as defined in \eqref{eq:mathcalS'_K:repeatagain}).  Suppose $F\subset E$ is saturated and $\mu_{\gamma_{t'}}(F) \ge \mu_{\gamma_{t'}}(E)$ for some $t'\in [0,1]$.  Then, letting $L'\in \Cone(K)$ be such that $p(L') = \gamma_{t'}$ we have $\mu^{L'}(F)\ge \mu^{L'}(E)$.  Thus, $F\in \mathcal S_K$ (as defined in \eqref{eq:mathcalS_K:repeatagain}).  Therefore, $\mathcal S_K$ detects the change in change of multi-Gieseker-stability for $\sigma(\cdot)$ as claimed.
\end{proof}

For later use, we record that if $F\subset E$, then by Riemann-Roch \eqref{eq:h1}
 \begin{equation}\label{eq:coeffsigma}
  p_{F\subset E}^{\sigma(t)} = \langle\langle h_1(t) || h_2(t) ||h_3(t)\rangle \rangle,
 \end{equation}
 where
 \begin{equation} 
  h_i(t) = \hilb_i(F,E) .\sum\nolimits_j \sigma_j(t) c_1(L_j)^{3-i}\label{eq:hit}.
 \end{equation}

\begin{proof}[Proof of Lemma \ref{lem:step2open}]
We have to show (1) $\mathcal S_K$ detects the change of multi-Gieseker-stability for $\eta(\cdot)$ and (2) For $a\in \mathbb N$ sufficiently large,  $\eta(\cdot)$ is an open segment of stability parameters.

The proof of (1) is easy.   We recall by definition, see \eqref{eq:defB}, $\eta(s) = (L_0,L_1; B_0(s), B_1(s))$, where
\begin{equation}
B_j(s) = \sigma_j(\overline{t}) \Bigl( s L_j^{\frac{a\sigma_j(t_1)}{\sigma_j(\overline{t})}} + (1-s)L_j^{\frac{a\sigma_j(t_0)}{\sigma_j(\overline{t})}}\Bigr).
\end{equation}
Hence,
\begin{equation}\label{eq:ranksandcherns}
\begin{aligned}
\rank(B_j(s)) &= \sigma_j(\overline{t}),\\
c_1(B_j(s))&= a( s\sigma_j(t_1) + (1-s)\sigma_j(t_0)) c_1(L_j)\\
&= a \sigma_j(st_1 + (1-s)t_0) c_1(L_j),\\
\ch_2(B_j(s))&= \frac{a^2}{2\sigma_j(\overline{t})} \left(s\sigma_j(t_1)^2+ (1-s) \sigma_j(t_0)^2 \right)c_1(L_j)^2.
\end{aligned}
\end{equation}

Hence, to show $\mathcal S_K$ witnesses the change in multi-Gieseker-stability for $\eta(\cdot)$ we are required to look at the curve class
$$ \gamma_{\overline{t}}:= \sigma_0(\overline{t}) c_1(L_0)^2 +  \sigma_1(\overline{t}) c_1(L_1)^2$$
(which actually is independent of $s$).   Now, just as in the proof of Lemma \ref{lem:S_kwitnesssigma}, in which we showed that $\mathcal S_K$ witnesses the change in multi-Gieseker-stability for $\sigma(\cdot)$, we have $\gamma_{\overline{t}} \in p(\Cone(K))$.  Thus, exactly the same proof shows that $\mathcal S_{K}$ detects the change in multi-Gieseker-stability for $\eta(\cdot)$.  

We now turn to proving that for $a\in \mathbb N$ sufficiently large,  $\eta(\cdot)$ is an open segment of stability parameters.  By Lemma \ref{lem:detectchangemultigieseker} it is sufficient to show that for $s>0$ sufficiently small, if $E$ is torsion-free of topological type $\tau$ and $F\subset E$ is saturated with $F\in \mathcal S_K$, we have
\begin{equation}p_{F\subset E}^{\eta(0)} (\le) 0 \text{ if and only if } p_{F\subset E}^{\eta(s)} (\le)0\label{eq:sufficientetaopen}
\end{equation}
and
\begin{equation*}p_{F\subset E}^{\eta(1)} (\le) 0 \text{ if and only if } p_{F\subset E}^{\eta(1-s)} (\le)0.
\end{equation*}
We will prove the first of these, as the proof of the second is identical.  Let $F\subset E$ be saturated with $F\in \mathcal S_K$ and $E$ torsion-free of topological type $\tau$.  Then, using Riemann-Roch \eqref{eq:h1} we have
\begin{equation}
 p_{F\subset E}^{\eta(s)} = \langle\langle u_1 || u_2(s) || u_3(s)\rangle\rangle,\label{eq:coeffeta}
\end{equation}
where 
\begin{equation}\label{eq:ui}
\begin{aligned}
  u_1 &= h_1(\overline{t}),\\
  u_2(s) &= h_2(\overline{t}) + ah_1(st_1 +(1-s)t_0),\\
  u_3(s) &= h_3(\overline{t}) + a h_2(st_1 + (1-s)t_0) + a^2 \epsilon(s),
\end{aligned}
\end{equation}
with $h_i(\cdot)$ defined as in \eqref{eq:hit} and 
$$\epsilon(s):=\frac{1}{2}\hilb_1(F,E).\sum\nolimits_{j} \frac{s \sigma_j(t_1)^2 + (1-s)\sigma_j(t_0)^2}{\sigma_j(\overline{t})} c_1(L_j)^2.$$ 

We claim that for $i=1,2,3$ we either have $u_i(s)\equiv 0$ for all $s\in [0,1]$ or $u_i(0)\neq 0$.    Observe that this claim proves \eqref{eq:sufficientetaopen} (and thus completes the proof of the lemma), since we can take $s>0$ sufficiently small so there are no roots of $u_i$ in $[0,s]$ for all (non-trivial) such $u_i$, and then \eqref{eq:sufficientetaopen} follows from the definition of ordering of polynomials in terms of the lexicographic order on the coefficients.
 
Now the claim is clear for $u_1$ as it is independent of $s$.  For $u_2$, notice that if $u_2(0)=0$, then $h_2(\overline{t}) + ah_1(t_0)=0$.  So, as $a$ can be chosen arbitrarily large, this implies $h_1(t_0)=0$ and $h_2(\overline{t})=0$.  
However, $t_0$ was chosen to be a point in an open chamber for $\sigma(\cdot)$,
so $h_1(t_0)=0$ implies $h_1\equiv 0$. Looking back at the expression \eqref{eq:ui} for $u_2$,
we then conclude that $u_2$ is actually independent of $s$, which proves the claim for $u_2$.

A similar argument works for $u_3$.  Suppose that 
$$0=u_3(0)=h_3(\overline{t}) + a h_2(t_0) + a^2 \epsilon(0).$$  Then, as $a$ can be chosen arbitrarily large, we must have $\epsilon(0)=0$, which means
$$0 = \hilb_1(F,E) \left( \frac{(1-t_0)^2}{\vol(L_0)(1-\overline{t})} c_1(L_0)^2 + \frac{t_0^2}{\vol(L_1)\overline{t}} c_1(L_1)^2\right). $$
Think of this as a quadratic polynomial in $t_0$, whose linear term is $$-\frac{2}{\vol(L_0)(1-\overline{t})}\hilb_1(F,E) c_1(L_0)^2.$$   Then, by the assumption that $L_0$ does not lie on any wall in $\tilde{\mathcal W}_K$,  either $\hilb_1(F,E)c_1(L)^2=0$ for all $L\in \Amp(X)_{\mathbb R}$ (in which case $\epsilon(\cdot)\equiv 0$), or this linear term is non-trivial, and so $\epsilon(0)$ is non-constant in $t_0$.  Thus, as $t_0$ was assumed to be a general point in the open chamber adjacent to $\overline{t}$, we may as well assume it was chosen so $\epsilon(0)\neq 0$, which is absurd.  Moreover, as there are only a finite number of topological types among all the sheaves $F\subset E$ under consideration, due to boundedness of $\mathcal S_K$, we can make such a choice uniformly over all such pairs.

Thus, we must in fact have  $\epsilon \equiv 0$.  
So, looking back at the expression for $u_3$, the assumption $u_3(0)=0$ is $0=h_3(\overline{t}) + ah_2(t_0)$ and hence $h_2(t_0)=0$ as $a$ is assumed to be arbitrarily large.  As $t_0$ is general in the open chamber, this implies $h_2(t)= 0$ for all $t\in[0,1]$, at which point one observes $u_3$ is also independent of $s$, which proves the claim for $u_3$.
\end{proof}

\begin{proof}[Proof of Lemma \ref{lem:step2endpoints}]
We shall show that $\eta(0)\equiv \sigma(t_0)$ (the other statement being proved similarly).      We first establish the following
\medskip

\noindent\emph {Claim: } It is sufficient to prove that if $E$ is of topological type $\tau$ and $F\subset E$ is saturated with $F\in \mathcal S_{K}$, then
\begin{equation}
p_{F\subset E}^{\sigma(t_0)} (\le) 0 \text{ if and only if } p_{F\subset E}^{\eta(0)} (\le) 0.\label{eq:lemstep2endpointssufficient}
\end{equation}\smallskip

We have seen essentially this argument before: Assume \eqref{eq:lemstep2endpointssufficient} holds and suppose that $E$ is (semi)stable with respect to $\sigma(t_{0})$.  Then, it is slope semistable with respect to $\gamma_{t_{0}}$ \eqref{eq:defgammat}, and so $E\in \mathcal S'_{K}$.  We wish to show that $E$ is (semi)stable with respect to $\eta(0)$. So, suppose there exists a saturated subsheaf $F\subset E$ such that $p_{F\subset E}^{\eta(0)} > 0$.
Then, $F\in\mathcal S_K$ since $\mathcal S_K$ detects the change in multi-Gieseker-stability for $\eta(\cdot)$. Hence, by condition \eqref{eq:lemstep2endpointssufficient} we have $p_{F\subset E}^{\sigma(t_0)} >0$, which is a contradiction.
The converse is proved similarly.

So, we proceed to prove \eqref{eq:lemstep2endpointssufficient} for any such sheaves $F$ and $E$.   From \eqref{eq:coeffsigma}, \eqref{eq:coeffeta}, \eqref{eq:ui} we have
$$p_{F\subset E}^{\sigma(t_0)} = \langle\langle h_1(t_0) || h_2(t_0) || h_3(t_0) \rangle \rangle,$$
and
\begin{align*}
p_{F\subset E}^{\eta(0)} &= \langle\langle u_1 || u_2(0) || u_3(0)\rangle\rangle\\
&=\langle\langle h_1(\overline{t}) || h_2(\overline{t}) + ah_1(t_0) || h_3(\overline{t}) + a h_2(t_0) + \hilb_1(F,E)\cdot \epsilon \rangle \rangle,
\end{align*}
where $h_j(\cdot)$ is as in \eqref{eq:hit}, and $\epsilon$ is a linear combination of $c_1(L_0)^2$ and $c_1(L_1)^2$.  

Assume $p_{F\subset E}^{\eta(0)} (\le)0$ and we shall show that $p_{F\subset E}^{\sigma(t_0)}(\le)0$.  If $h_1(\overline{t})<0$, then $h_1(t_0)<0$,
since we are assuming that $t_0$ lies in an open chamber immediately adjacent to $\overline{t}$. Thus, we also have 
$p_{F\subset E}^{\sigma(t_0)}(\le)0$ in this case. 
If  $h_1(\overline{t})=0$ and $h_2(\overline{t}) + ah_1(t_0)<0$, then by making $a$ sufficiently large we get either
$h_1(t_0)<0$, in which case we are done, or $  h_1(t_0)=0$ and $h_2(\overline{t})<0$, in which case $h_1\equiv0$ and 
$h_2(\overline{t})<0$. 
But then $h_2(t_0)<0$, since $t_0$ lies in an open chamber immediately adjacent to $\overline{t}$ ,
and we are done again.
So, assume now that $h_1\equiv0$, $h_2\equiv0$. Then, $\hilb_{1}(F,E).\epsilon=0$ as well and the implication follows from the fact that $h_3(t)$ has constant sign on $[0,1]$.

For the converse, if $h_1(t_0)<0$, then $h_1(\overline{t})\le 0$ 
since $\overline{t}$ lies in the closure of the chamber containing $t_0$, 
and   $h_2(\overline{t})+ah_1(t_0)(\le) 0$
for  sufficiently large $a$. If $h_1(t_0)=0$. then $h_1\equiv 0$, as $t_0$ is in an open chamber, which in turn
implies $\hilb_{1}(F,E).\epsilon=0$. We deal now with the conditions $h_2(t_0)<0$ and $h_2(t_0)=0$ in the same way we have dealt with
$h_1(t_0)<0$ and $h_1(t_0)=0$.   Observe that the size of $a$ needed depends only on the topological type of $F$, and since there are only finitely many types of $F$ under consideration we can pick such an integer $a$ uniformly.
\end{proof}

\begin{remark}\label{rmk:simplified}
  The previous two proofs can be simplified at a cost of making the stability segment more complicated.  More precisely, if we arrange for the $B_j(s)$ to have the same rank and first Chern class as before, but with $\ch_2(B_j) = \lambda \sigma_j(\overline{t})$ for some $\lambda\in \mathbb R$.  Then $\epsilon \equiv \lambda h_1(\overline{t})$ (so in particular is constant in $s$).    We leave it as an exercise to the reader to prove that one can construct such $B_j(s)$ of the form $\frac{1}{i_0} \sigma_j(\overline{t}) \sum_{i=1}^{i_0} L_j^{a_{ij}}$ for suitable  $a_{ij}\in \mathbb Z$.   We have chosen to use the simpler stability segment as we consider it more natural.
\end{remark}

Our next task is to construct the necessary twisting for the stability segment $\zeta(\cdot)$.

\begin{lemma}\label{lem:existencetwistsI}
Let $\lambda,\mu\in \mathbb R_{>0}$ and $A$ be a line bundle.  
Then, there exists a formal sum $B$ of line bundles with positive coefficients such that $\rank(B) = \lambda$, $c_1(B)=0$ and $\ch_2(B) = \mu c_1(A)^2$.
\end{lemma}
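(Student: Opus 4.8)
The statement asks, given $\lambda, \mu \in \mathbb{R}_{>0}$ and a line bundle $A$, for a formal sum $B = \sum_i b_i B_i$ of line bundles with strictly positive coefficients $b_i$ such that $\operatorname{rank}(B) = \sum_i b_i = \lambda$, $c_1(B) = \sum_i b_i c_1(B_i) = 0$, and $\operatorname{ch}_2(B) = \sum_i b_i \operatorname{ch}_2(B_i) = \mu\, c_1(A)^2$. The plan is to write down an explicit such $B$ as a combination of powers of $A$ and its dual. Recall that for a line bundle $L$ one has $\operatorname{ch}(L) = 1 + c_1(L) + \tfrac12 c_1(L)^2 + \cdots$, so $\operatorname{ch}(A^n) = 1 + n c_1(A) + \tfrac{n^2}{2} c_1(A)^2 + \cdots$. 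The key observation is that a symmetric combination $\alpha A^n + \alpha A^{-n}$ has vanishing $c_1$ (the linear terms cancel) while its $\operatorname{ch}_2$ is $\alpha n^2 c_1(A)^2$, and its rank is $2\alpha$. So such symmetric pairs give us exactly the freedom we need in the rank and $\operatorname{ch}_2$ slots, with $c_1$ automatically zero.

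First I would fix an integer $n \geq 1$ large enough that $\mu / n^2 < \lambda/2$ (possible since $n^2 \to \infty$), and set $\alpha := \mu/n^2 > 0$ and $\beta := \tfrac12(\lambda - 2\alpha) > 0$. Then I would take
\[
B := \alpha A^n + \alpha A^{-n} + \beta \mathcal{O}_X + \beta \mathcal{O}_X,
\]
that is, the formal sum with line bundles $A^n, A^{-n}, \mathcal{O}_X, \mathcal{O}_X$ and positive coefficients $\alpha, \alpha, \beta, \beta$ respectively (one may of course combine the two copies of $\mathcal{O}_X$ into a single summand $2\beta\,\mathcal{O}_X$ if one prefers, but leaving them separate costs nothing). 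Then $\operatorname{rank}(B) = 2\alpha + 2\beta = \lambda$, and $c_1(B) = \alpha c_1(A^n) + \alpha c_1(A^{-n}) = \alpha n c_1(A) - \alpha n c_1(A) = 0$, while $\operatorname{ch}_2(B) = \alpha \cdot \tfrac{n^2}{2} c_1(A)^2 + \alpha \cdot \tfrac{n^2}{2} c_1(A)^2 = \alpha n^2 c_1(A)^2 = \mu\, c_1(A)^2$, using that $\operatorname{ch}_2(\mathcal{O}_X) = 0$. This is exactly the required data.

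There is essentially no obstacle here: the lemma is a bookkeeping exercise in the additivity of the Chern character (extended linearly to formal sums, as stipulated in the paragraph following Lemma \ref{lem:finaltwist}), and the only mild point is ensuring the coefficients stay strictly positive, which is arranged by the choice of $n$ large. If one wanted integer exponents only without using $A^{-n}$, one could instead use a combination of the form $\alpha(A^n + A^{-n})$ written via $A^{-n} = (A^{\vee})^n$, which is again a genuine line bundle, so no difficulty arises. I would close by remarking that the construction is manifestly uniform in the sense that finitely many such triples $(\lambda, \mu, A)$ can be handled by a single common choice of $n$, which is how the lemma will be applied in building $\zeta(\cdot)$.
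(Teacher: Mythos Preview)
Your proof is correct and is essentially identical to the paper's own argument: the paper also sets $\alpha := \mu/n^2$ for $n$ large enough that $\alpha < \lambda/2$ and takes $B := \alpha(A^n + A^{-n}) + (\lambda - 2\alpha)\mathcal{O}_X$. Your version merely splits the trivial-bundle summand into two copies with coefficient $\beta = \tfrac12(\lambda - 2\alpha)$ each, which is the same formal sum.
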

\begin{proof}
Let $\alpha := \mu n^{-2}$, where $n$ is an integer large enough so $\alpha<\lambda/2$.  Then, 
$$B:= \alpha (A^n + A^{-n}) + (\lambda - 2\alpha) \mathcal O_X$$ has the desired properties.
\end{proof}

\begin{corollary}\label{cor:existencetwistsII}
Let $L_0$ and $L_1$ be line bundles and $\alpha_0,\alpha_1\in \mathbb R_{>0}$.  Then, there exists a formal sum $C$ of line bundles with positive coefficients such that $\rank(C) =1$, $c_1(C)=0$, and 
$$\ch_2(C) = \sum\nolimits_{k=0}^1 \alpha_k c_1(L_k)^2.$$
\end{corollary}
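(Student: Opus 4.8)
The plan is to reduce this immediately to Lemma~\ref{lem:existencetwistsI} by splitting the desired rank between two summands, one attached to $L_0$ and one to $L_1$. First I would choose any decomposition $1 = \lambda_0 + \lambda_1$ with $\lambda_0,\lambda_1 \in \mathbb{R}_{>0}$, for instance $\lambda_0 = \lambda_1 = \tfrac12$. Applying Lemma~\ref{lem:existencetwistsI} to the triple $(\lambda_k,\alpha_k,L_k)$ for $k=0,1$ then produces formal sums $B_0$ and $B_1$ of line bundles, each with positive coefficients, such that $\rank(B_k) = \lambda_k$, $c_1(B_k) = 0$ and $\ch_2(B_k) = \alpha_k c_1(L_k)^2$.

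Next I would simply set $C := B_0 + B_1$. Since the Chern character is extended linearly to formal sums of line bundles, one reads off $\rank(C) = \lambda_0 + \lambda_1 = 1$, $c_1(C) = c_1(B_0) + c_1(B_1) = 0$, and $\ch_2(C) = \ch_2(B_0) + \ch_2(B_1) = \sum_{k=0}^1 \alpha_k c_1(L_k)^2$, which is exactly what is required; the coefficients of $C$ are positive because those of $B_0$ and $B_1$ are. It is worth noting that no positivity or genericity hypothesis on $L_0,L_1$ is needed here: if, say, $L_0 \cong \mathcal{O}_X$ then $c_1(L_0)^2 = 0$ and the construction in Lemma~\ref{lem:existencetwistsI} just returns $\lambda_0\mathcal{O}_X$, which is still admissible.

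I do not expect any genuine obstacle: the statement is a formal bookkeeping consequence of the lemma. The only mild point to be attentive to is that each application of Lemma~\ref{lem:existencetwistsI} is free to pick its own auxiliary integer $n$ (equivalently, its own pair of line bundles $L_k^{\pm n}$), so $B_0$ and $B_1$ need not involve matching powers; this is harmless since we only ever use their formal sum.
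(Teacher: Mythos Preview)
Your proof is correct and matches the paper's argument essentially verbatim: the paper applies Lemma~\ref{lem:existencetwistsI} twice with $\lambda_k = \tfrac12$ to obtain $C_k$ with $\rank(C_k)=\tfrac12$, $c_1(C_k)=0$, $\ch_2(C_k)=\alpha_k c_1(L_k)^2$, and sets $C=C_0+C_1$. Your additional remarks about the freedom in choosing $\lambda_k$ and the auxiliary integers $n$ are correct but not needed.
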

\begin{proof}
Apply the previous lemma twice to get $C_k$ with $\rank(C_k)=1/2$, $c_1(C_k)=0$, and $\ch_2(C_k) = \alpha_k c_1(L_k)^2$.  Then, set $C:= C_1 + C_2$.  
\end{proof}

\begin{lemma}\label{lem:alphabeta}
Let $q_{ji}\in \mathbb R$ for $j,i=0,1$ and $r_0,r_1\in \mathbb R_{>0}$.  Suppose also $c_{ji}\in \mathbb R$ with $c_{0k}\neq c_{1k}$ for $k\in \{0,1\}$.     Then, for $\lambda\gg 0$ and all $b>0$, there exist $\alpha_{jik}\in \mathbb R_{>0}$ such that
\begin{align}
\sum\nolimits_j r_j \alpha_{jik} &= r_k b(\lambda + q_{ki})&&\text{ for }i,k=0,1\label{eq:alphabeta1}\\
r_0 c_{0k} (\alpha_{01k} - \alpha_{00k}) &= r_1 c_{1k} (\alpha_{10k} - \alpha_{11k})&&\text{ for } k=0,1.\label{eq:alphabeta2}
\end{align}
\end{lemma}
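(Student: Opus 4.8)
The plan is to solve the system by elementary linear algebra, exploiting that it has more unknowns ($8$) than equations ($6$) and that the genericity hypothesis $c_{0k}\neq c_{1k}$ makes \eqref{eq:alphabeta2} collapse to a harmless affine condition rather than an obstruction.

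First I would dispose of the parameter $b$: the substitution $\alpha_{jik}=b\,\beta_{jik}$ turns \eqref{eq:alphabeta1} into $\sum_j r_j\beta_{jik}=r_k(\lambda+q_{ki})$, leaves \eqref{eq:alphabeta2} (which is homogeneous in the $\alpha$'s) unchanged, and preserves positivity since $b>0$. Hence it suffices to treat the case $b=1$, and the threshold $\lambda_0$ produced will then automatically be independent of $b$.

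Next I would observe that, with $b=1$ fixed, the system decouples over $k\in\{0,1\}$: for each fixed $k$ the four unknowns $\alpha_{00k},\alpha_{01k},\alpha_{10k},\alpha_{11k}$ are constrained only by the two instances of \eqref{eq:alphabeta1} (for $i=0$ and $i=1$) together with the single instance of \eqref{eq:alphabeta2} for that $k$. So I would fix $k$, treat $\alpha_{00k}$ as a free parameter, use \eqref{eq:alphabeta1} to write $\alpha_{10k}=r_1^{-1}\bigl(r_k(\lambda+q_{k0})-r_0\alpha_{00k}\bigr)$ and $\alpha_{11k}=r_1^{-1}\bigl(r_k(\lambda+q_{k1})-r_0\alpha_{01k}\bigr)$, and substitute these into \eqref{eq:alphabeta2}. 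Using $c_{0k}\neq c_{1k}$, that equation collapses to the single affine relation $\alpha_{01k}-\alpha_{00k}=\delta_k$, where $\delta_k:=\dfrac{c_{1k}\,r_k\,(q_{k0}-q_{k1})}{r_0(c_{0k}-c_{1k})}$ is a constant not depending on $\lambda$. Now I would choose $\alpha_{00k}:=1+\max(0,-\delta_0,-\delta_1)$, a fixed positive number for which both $\alpha_{00k}>0$ and $\alpha_{01k}=\alpha_{00k}+\delta_k>0$; since $\alpha_{00k}$ and $\alpha_{01k}$ are now pinned to fixed values, the displayed formulas for $\alpha_{10k}$ and $\alpha_{11k}$ are positive once $\lambda$ exceeds an explicit threshold (of the form $\tfrac{r_0\alpha_{00k}}{r_k}-q_{k0}$, and likewise for $\alpha_{11k}$). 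Taking $\lambda_0$ to be the maximum of these finitely many thresholds over $k\in\{0,1\}$, and then rescaling the solution by $b$ as in the first step, yields $\alpha_{jik}>0$ satisfying both \eqref{eq:alphabeta1} and \eqref{eq:alphabeta2} for every $\lambda\ge\lambda_0$ and every $b>0$.

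There is no serious obstacle here. The one point that needs a little care is the uniformity of $\lambda_0$ in $b$: carrying $b$ through the computation would make the positivity thresholds for $\alpha_{10k}$ and $\alpha_{11k}$ scale like $b^{-1}$ and hence blow up as $b\to 0$, so it is essential to perform the rescaling reduction to $b=1$ at the very beginning, as above.
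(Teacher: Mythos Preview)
Your proof is correct and follows essentially the same approach as the paper: decouple over $k$, use the hypothesis $c_{0k}\neq c_{1k}$ to solve the underdetermined linear system, achieve positivity by taking $\lambda$ large, and handle $b$ by homogeneity. The only cosmetic differences are that the paper first solves an auxiliary system with right-hand sides $r_k q_{ki}$ (no $\lambda$) and then translates the solution by a $\lambda$-dependent vector in the kernel of the constraint map before scaling by $b$, whereas you rescale by $b$ at the outset and compute the affine constraint $\alpha_{01k}-\alpha_{00k}=\delta_k$ explicitly; your remark on why the rescaling must precede the positivity argument is a nice clarification of a point the paper leaves implicit.
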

\begin{proof}
The equations for $k=0$ and $k=1$ separate, so it is sufficient to prove the claim for $k=0$.  By linear trivialities, since $c_{00}\neq c_{10}$,  there certainly exist $\beta_{ji}\in \mathbb R$ with
\begin{align*}
r_0 \beta_{00} + r_1 \beta_{10} &= r_0  q_{00}\\
r_0 \beta_{01} + r_1 \beta_{11} &=  r_0   q_{01}\\
r_0 c_{00} (\beta_{01} - \beta_{00}) &= c_{10} r_1 (\beta_{10} - \beta_{11}).
\end{align*}  
Now, let $\alpha_{0i0} = b (\beta_{0i} +  \lambda/2) $ and $\alpha_{1i0} =b( \beta_{1i} +  \lambda r_0/2r_1)$ for $i=0,1$ which are positive as long as $\lambda$ is sufficiently large.
\end{proof}

\begin{proof}[Proof of Lemma \ref{lem:finaltwist}]
Set $r_j := \rank (B_j(\overline{s}))= \sigma_j(\overline{t})$ and
$$q_{ji} := \frac{a(s_i-\overline{s})(\sigma_j(t_1)- \sigma_j(t_0))}{\sigma_j(\overline{t})}.$$
Then, recall we are to find $D_j(r)$ such that
\begin{enumerate}
\item $\rank(D_j(r)) = r_j$ and $c_1(D_j(r)) = c_1(B_j(\overline{s})).$  
\item For $i=0,1$, we have
$$\sum\nolimits_{j} \ch_2(D_j(i)) = b \sum\nolimits_{j} r_j (\lambda + q_{ji})c_1(L_j)^2 +\sum\nolimits_j \ch_2(B_j(\overline{s})). $$
\item The quantity $\sum\nolimits_{j} c_1(L_j) \ch_2(D_j(r))$ is independent of $r$.
\end{enumerate}
To do this, let $c_{ik} = \int_X c_1(L_i) c_1(L_k)^2$.  
Then, our assumption \eqref{eq:gen} says that $c_{0k}\neq c_{1k}$ for $k=0,1$.   
So, we can apply Lemma \ref{lem:alphabeta} to get $\alpha_{jik}>0$ for $i,j,k\in \{0,1\}$ 
that satisfy \eqref{eq:alphabeta1} and \eqref{eq:alphabeta2}. From Corollary \ref{cor:existencetwistsII} there exist formal sums of line bundles $C_{ji}$ 
with positive coefficients, such that $\rank(C_{ji})=1$ and $c_1(C_{ji})=0$ and
\begin{equation}
\ch_2(C_{ji}) = \alpha_{ji0} c_1(L_0)^2 + \alpha_{ji1} c_1(L_1)^2 = \sum\nolimits_k \alpha_{jik} c_1(L_k)^2.\label{ch2Cji}
\end{equation}
Setting
$$ D_j(r) := B_j(\overline{s}) \otimes ((1-r) C_{j0} + r C_{j1}),$$
 one sees immediately that property (1) holds.  For (2), we compute
\begin{align*}
 \sum\nolimits_j \ch_2(D_j(0)) &= \sum\nolimits _j \ch_2(B_j(\overline{s}) \otimes C_{j0}) \\
&=\sum\nolimits_j r_j \ch_2(C_{j0}) + \ch_2(B_j(\overline{s})) \\
&=\sum\nolimits_j r_j (\alpha_{j00} c_1(L_0)^2 + \alpha_{j01} c_1(L_1)^2) + \sum\nolimits_j \ch_2(B_j(\overline{s}) .
\end{align*}
With \eqref{eq:alphabeta1} this yields property (2) when $i=0$ (and the computation for $i=1$ is the same).  Finally,
\begin{align*}
\sum\nolimits_j c_1(L_j) \ch_2(D_j(r)) &= \sum\nolimits_j c_1(L_j)  . \left(\ch_2(B_j(\overline{s})) + r_j ( (1-r) \ch_2(C_{j0}) + r \ch_2(C_{j1}) )\right)\\
&= r \sum\nolimits_j r_j c_1(L_j).(\ch_2(C_{j1}) - \ch_2(C_{j0})) + \epsilon,
\end{align*}
where $\epsilon$ is independent of $r$.  But using \eqref{ch2Cji} and \eqref{eq:alphabeta1},
\begin{align*}
\sum\nolimits_j r_j c_1(L_j).(\ch_2(C_{j1}) - \ch_2(C_{j0})) &= \sum\nolimits_k \sum\nolimits_j r_j (\alpha_{j1k}  - \alpha_{j0k}) c_{jk}=0,
\end{align*}
which gives property (3).
\end{proof}

\begin{proof}[Proof of Lemma \ref{lem:step3endpoints}]
We have to show that  (1) $\mathcal S_K$ detects the change in multi-Gieseker stability for $\zeta(\cdot)$, (2) $\zeta(\cdot)$ is a bounded and uniform segment of stability parameters and (3) for $b\in \R_{>0}$ sufficiently large, $\zeta(0) \equiv \eta(s_0)$ and $\zeta(1) \equiv \eta(s_1)$.

We recall that by construction $ \zeta(r) =  (L_0,L_1; D_0(r), D_1(r))$ for $r\in [0,1]$, where
\begin{align}\label{eq:Dj}
\rank (D_j(r)) &=\rank (B_j(\overline{s})) =r_j \text{ and } c_1(D_j(r)) = c_1(B_j(\overline{s})) 
\end{align}
In particular,
$$ \vol(L_0) \rank (D_0(r)) + \vol(L_1) \rank (D_1(r)) = \vol(L_0) \rank (B_0(\overline{s})) + \vol(L_1) \rank (B_1(\overline{s})) = 1,$$
so $\zeta(r)$ is in fact a stability segment.  

Clearly, $\zeta(\cdot)$ is bounded as any sheaf that is semistable with respect to $\zeta(r)$ for some $r\in [0,1]$ is slope semistable with respect to $\gamma_{{\overline{t}}}$.  Moreover, from this (1) follows in the same way as in the proof of Lemma \ref{lem:step2open}.  We next show that $\zeta(\cdot)$ is uniform.  To see this, let $E$ be torsion-free, so by Riemann-Roch
$$ P^{\zeta(r)}_E(k) = \rank(E) k^3 + a_1 k^2 + a_2 k + a_3$$
with 
\begin{align*}
a_1 &= \sum\nolimits_j c_1(L_j)^2.(r_j\Hilb_1(E) +\rank(E) c_1(D_j(r))\\
a_2 &= \sum\nolimits_j c_1(L_j). \bigl[ r_j\Hilb_2(E) + \rank(E)\ch_2(D_j(r)+c_1(E)c_1(D_j(r))\\ & \quad \quad\quad \quad \quad\quad \quad\quad+\rank(E)c_1(D_j(r))\Todd_1(X)\bigr].
\end{align*}
Now, by \eqref{eq:Dj} we see $r_j = \sigma_k(\overline{t})$ and $c_1(D_j(r))$ are independent of $r$, and so $a_1$ is independent of $r$.  Moreover, by Lemma \ref{lem:finaltwist}(3) the quantity $\sum_j c_1(L_j)\ch_2(D_j(r))$ is also independent of $r$, and hence so is $a_2$.  Thus, the only term that depends non-trivially on $r$ is $a_3$, and thus $\zeta(\cdot)$ is uniform, proving (2).

We now prove (3) by showing $\zeta(0)\equiv \eta(s_0)$ for $b \gg 0$ (the other endpoint being similar).  Exactly as in the claim at the start of the proof of Lemma \ref{lem:step2endpoints}, it is sufficient to prove that if $E$ is torsion-free of topological type $\tau$, and $F\subset E$ is saturated with $F\in \mathcal S_K$ it holds that
\begin{equation}
p_{F\subset E}^{\eta(s_0)} (\le) 0 \text{ if and only if } p_{F\subset E}^{\zeta(0)} (\le) 0.\label{eq:lem:step3endpointssufficient}
\end{equation}

To prove this, with $u_i(s)$ defined as in \eqref{eq:ui} one computes using Riemann-Roch \eqref{eq:h1} that if $F\subset E$, then
\begin{equation}
 p_{F\subset E}^{\zeta(r
)} = \langle \langle u_1 || u_2(\overline{s}) || u_3(\overline{s}) + \delta(r) \rangle \rangle,\label{eq:reducedzetar}
\end{equation}
where
$$ \delta(r) = \sum\nolimits_j \rank(B_j(\overline{s})) \hilb_1(F,E).[ (1-r)\ch_2(C_{j0}) + r\ch_2(C_{j1})].$$
After some manipulation with \eqref{eq:ui} and \eqref{ch2Cji}, we in fact have
\begin{equation}
\delta(r) = b[\lambda u_1 + (1-r) u_2(s_0) + ru_2(s_1) - u_2(\overline{s})].\label{eq:reducedzetaruseful}
\end{equation}
Thus, in particular
$$ p_{F\subset E}^{\zeta(0)} = \langle \langle u_1 || u_2(\overline{s}) || u_3(\overline{s}) + b(u_2(s_0) - u_2(\overline{s}) - \lambda u_1) \rangle \rangle.$$
Now, clearly $p_{F\subset E}^{\zeta(0)}(\le) 0$ if and only if
$$\langle \langle u_1 || u_2(\overline{s}) || u_3(\overline{s}) + b(u_2(s_0) -u_2(\overline{s})\rangle \rangle(\le) 0,$$
which in turn occurs if and only if
\begin{equation}\label{lem:zetaend2}
\langle \langle u_1 || u_2(\overline{s}) || u_3(\overline{s}) + bu_2(s_0)\rangle \rangle(\le) 0.
\end{equation}
\smallskip

\noindent\emph{Final Claim: }For $b$ sufficiently large, the inequality \eqref{lem:zetaend2} holds if and only if
$$\langle \langle u_1 || u_2(s_0) || u_3(s_0) \rangle \rangle(\le) 0.$$
\smallskip

The argument for this is similar to what has been done before, cf.~the proof of Lemma~\ref{lem:step2endpoints}.  Assume \eqref{lem:zetaend2} holds.  Then, $u_1\le 0$.  If this is strict we are done.  So, we may assume $u_1=0$, and thus $u_2(\overline{s})\le 0$.  If this inequality is strict, then as $s_0$ is in an adjacent chamber to $\overline{s}$ we have $u_2(s_0)<0$ and we are done.  Otherwise $u_2(\overline{s})=0$ and $u_3(\overline{s}) + bu_2(s_0)\le 0$.  As $b$ is large, this implies $u_2(s_0)\le 0$.  Again, if this is strict, we are done.  Otherwise 
\begin{equation}\label{lem:zetaend1}
u_2(s_0)=0 \text{ and } u_3(\overline{s})(\le) 0.
\end{equation}
By Lemma \ref{problem} below, the first of these statements gives $\hilb_1(F,E)c_1(L_j)^2=0$ and also $\hilb_2(F,E)c_1(L_j)=0$ for $j=0,1$.    Hence, from \eqref{eq:ui}, \eqref{eq:h1}, and \eqref{eq:ranksandcherns} we actually have $u_3(s) = h_3(\overline{t})$, which is independent of $s$.  Consequently, from the second statement in \eqref{lem:zetaend1} we deduce $u_3(s_0)(\le) 0$ as well, proving one direction of the claim.

For the converse, assume $\langle \langle u_1 || u_2(s_0) || u_3(s_0) \rangle \rangle\le 0$.  Then, $u_1\le 0$, and if strict inequality holds we are done.  Otherwise, $u_1=0$ and $u_2(s_0)\le 0$, and so $u_2(\overline{s})\le 0$ by continuity.   If strict inequality holds, then $u_2(s_0)<0$ as well, since $s_0$ is in an adjacent chamber, and we are done.  So, we may assume $u_2(\overline{s})=0$.  We divide into two cases.  In the first case $u_2(s_0)=0$.  Then, $u_3(s_0) (\le) 0$ and hence by continuity $u_3(\overline{s})(\le) 0$ (the case of equality here uses that $s_0$ lies in an open chamber, so $u_3(s_0)=0$ implies $u_3(\cdot)\equiv 0$).   In the second case $u_2(s_0)<0$.  But for $b$ large this implies $u_3(\overline{s}) + bu_2(s_0)<0$, and we are done here as well.  This proves the Final Claim, and hence concludes the proof of Lemma~\ref{lem:step3endpoints}.
\end{proof}

\begin{lemma}\label{problem}
Let $E$ be of topological type $\tau$ and $F\in \mathcal S_{K}$.  With $u_{i}$ defined as in \eqref{eq:ui}, suppose that $u_1=0$ and $u_2(s_0)=0$.  
Then, $u_2\equiv 0$, $\hilb_1(F,E)c_1(L_j)^2=0$, and $$\hilb_2(F,E)c_1(L_j)=0$$ for $j=0,1$.  
\end{lemma}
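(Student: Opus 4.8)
The plan is to obtain the first assertion $u_2\equiv 0$ almost immediately from the chamber structure of $\eta(\cdot)$, to deduce $h_1\equiv 0$ and $h_2(\overline{t})=0$ by elementary linear algebra in the parameter, and then to convert these into the Chern-class vanishings via Riemann--Roch, using the hypothesis that $L_0$ and $L_1$ are separated by a single wall of the first kind.

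First I would note that, since $\mathcal S_K$ detects the change of multi-Gieseker-stability for $\eta(\cdot)$ by Lemma~\ref{lem:step2open}(1) and $F\in\mathcal S_K$, the function $s\mapsto u_2(s)$ is one of the functions whose roots define the multi-Gieseker walls of $\eta(\cdot)$ associated to $\mathcal S_K$ (Definition~\ref{def:multiGiesekerwalls}). As $s_0$ lies in an open chamber it is on no such wall, so it cannot be a root of $u_2$ unless $u_2\equiv 0$; hence the hypothesis $u_2(s_0)=0$ forces $u_2\equiv 0$. Next, by \eqref{eq:ui} we have $u_2(s)=h_2(\overline{t})+a\,h_1\big(st_1+(1-s)t_0\big)$, and since $h_1(\cdot)$ is affine in $t$ this is affine in $s$; its identical vanishing gives $h_1(t_1)=h_1(t_0)$ and $h_2(\overline{t})+a\,h_1(t_0)=0$. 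Because $h_1$ is affine and takes the same value at the distinct points $t_0$ and $t_1$, it is constant, so the hypothesis $u_1=h_1(\overline{t})=0$ forces $h_1\equiv 0$ and therefore $h_2(\overline{t})=0$.

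It then remains to read off the claimed vanishings. By \eqref{eq:hit}, $h_1(t)=\hilb_1(F,E)\cdot\big(\sigma_0(t)c_1(L_0)^2+\sigma_1(t)c_1(L_1)^2\big)$; evaluating $h_1\equiv 0$ at $t=0$ and $t=1$, where $(\sigma_0,\sigma_1)$ equals $(\vol(L_0)^{-1},0)$ and $(0,\vol(L_1)^{-1})$, yields $\hilb_1(F,E)c_1(L_j)^2=0$ for $j=0,1$. For the other statement, $h_2(\overline{t})=\hilb_2(F,E)\cdot c_1(L')$ with $L':=\sigma_0(\overline{t})L_0+\sigma_1(\overline{t})L_1$; since $\overline{t}\in(0,1)$ both coefficients are strictly positive, so $L'$ is a positive multiple of a point $(1-\lambda)L_0+\lambda L_1$ of the open segment between $L_0$ and $L_1$, with $\lambda\in(0,1)$. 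As $L\mapsto\beta_{F,2}^{L}$ is linear, $h_2(\overline{t})=0$ says this point lies on the wall $\tilde W_{F,2}$; but by hypothesis the segment between $L_0$ and $L_1$ (which is contained in $\Cone(K)$) meets no wall of the second kind, so $\tilde W_{F,2}\notin\tilde{\mathcal W}_K$, and being non-empty it must be all of $\Amp(X)_{\mathbb R}$. In particular $\hilb_2(F,E)c_1(L_j)=0$ for $j=0,1$.

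I expect the only real subtlety to be the bookkeeping in the second paragraph: one must be sure that $u_2$ is genuinely among the wall-defining functions of $\eta(\cdot)$ (this is where $F\in\mathcal S_K$ and Lemma~\ref{lem:step2open}(1) are used) and that a point of an open chamber misses the root of every non-trivial such function, which is exactly how chambers are set up relative to Definition~\ref{def:multiGiesekerwalls}. Everything else is linear algebra in the variable $t$ together with the geometric input already packaged in ``separated by a single wall of the first kind''.
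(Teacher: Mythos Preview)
Your proof is correct and follows essentially the same approach as the paper's: use that $s_0$ lies in an open chamber to get $u_2\equiv 0$, deduce $h_1\equiv 0$ and $h_2(\overline{t})=0$ from \eqref{eq:ui}, and then invoke the ``separated by a single wall of the first kind'' hypothesis to force $\tilde W_{F,2}$ to be trivial. Your write-up simply makes explicit the linear-algebra steps (e.g., $h_1(t_0)=h_1(t_1)$ plus $u_1=h_1(\overline{t})=0$ giving $h_1\equiv 0$) and the passage from $h_2(\overline{t})=0$ to the wall argument that the paper leaves implicit.
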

\begin{proof}
 As $s_0$ is in an open chamber, the second assumption implies $u_2 \equiv 0$. 
But looking back at \eqref{eq:ui} this implies $h_1\equiv 0$, which gives $\hilb_1(F,E)c_1(L_j)^2=0$ for $j=0,1$ by \eqref{eq:hit}.    Looking again at \eqref{eq:ui} the fact that $u_2\equiv 0$ now implies $h_2(\overline{t})=0$.  But by the assumption that $L_0$ and $L_1$ are separated by a single wall of the first kind (and not by any walls of the second kind) this is impossible unless $\hilb_2(F,E)c_1(L_j)=0$ for $j=0,1$ as well. 
\end{proof}
\begin{remark}
We mention again that it is the proof of Lemma~\ref{problem} where the assumption on the two line bundles $L_0$ and $L_1$ being separated by a single wall of the first kind is used.
\end{remark}

\begin{proof}[Proof of Lemma \ref{lem:step3open}]
We end this ordeal by proving that for $b$ sufficiently large $\zeta(\cdot)$ is open.  As $\mathcal{S}_K$ detects the change of stability for $\zeta(\cdot)$, by Lemma \ref{lem:detectchangemultigieseker} it is sufficient to prove that for $r$ sufficiently small, if $E$ is torsion-free of type $\tau$ and $F\subset E$ is saturated with $F\in \mathcal S_K$, then
$$p_{F\subset E}^{\zeta(0)} (\le) 0 \text{ if and only if } p_{F\subset E}^{\zeta(r)} (\le) 0$$
and
$$p_{F\subset E}^{\zeta(1)} (\le) 0 \text{ if and only if } p_{F\subset E}^{\zeta(1-r)} (\le) 0.$$
We will prove the first of these, the second being proved in precisely the same way.   So let $E$ and $F$ be such sheaves.  Recall from \eqref{eq:reducedzetar}, \eqref{eq:reducedzetaruseful} that
\begin{equation*}
 p_{F\subset E}^{\zeta(r
)} = \langle \langle u_1 || u_2(\overline{s}) || u_3(\overline{s}) + \delta(r) \rangle \rangle
\end{equation*}
with
\begin{equation*}
\delta(r) = b[\lambda u_1 + (1-r) u_2(s_0) + ru_2(s_1) - u_2(\overline{s})].
\end{equation*}
Since $u_1$ and $u_2(\overline{s})$ are independent of $r$, it is sufficient to show the following: if $u_1=u_2(\overline{s})=0$ and $u_3(\overline{s}) + \delta(0)=0$, then $u_3(\overline{s}) +\delta(\cdot)\equiv 0$.  

So, assume  $u_1=u_2(\overline{s})=0=u_3(\overline{s}) + \delta(0)$. Then, the first equality implies $\delta(0) = b u_{2}(s_{0})$, and so
$$ 0 = u_{3}(\overline{s}) + b u_{2}(s_{0}).$$
Since $b$ is arbitrarily large, this implies $u_{2}(s_{0})=0$.    Hence, the linear function $s\mapsto u_{2}(s)$ vanishes at the two distinct points $s=\overline{s}$ and $s=s_{0}$, and thus $u_{2}(s)=0$ for all $s\in [0,1]$.  In particular, $u_{2}(s_{1})=0$, and so the expression $u_{3}(\overline{s}) + \delta(r)$ is actually independent of $r$, which completes the proof.
\end{proof}

\begin{remark}
As $\zeta(\cdot)$ is uniform, not only is it almost perfect but in fact is perfect (that is, the moduli spaces that appear on the walls with respect to $\zeta(r)$ for $r\in [0,1]$ are also related to the moduli spaces of the endpoints by Thaddeus-flips through moduli spaces of sheaves).  With notation above, choose $\tilde{r}\in (0,1)$ so $(1-\tilde{r}) s_0 + \tilde{r} s_1 = \overline{s}$.  Then using linearity of $u_2$ one sees easily from \eqref{eq:reducedzetaruseful} that $\eta(\overline{s}) \equiv \zeta(\tilde{r})$.  Thus $\eta(\cdot)$ is also perfect.  It does not seem to be the case that the segment $\sigma(\cdot)$ is also perfect; the same approach fails due to the additional $\epsilon(s)$ term appearing in $u_3(\cdot)$ \eqref{eq:ui}.  However if one instead uses the twisting as mentioned in Remark \ref{rmk:simplified} this $\epsilon(\cdot)$ term vanishes, and one can show that if $(1-\tilde{s})t_0 + \tilde{s} t_1 = \overline{t}$ then $\eta(\tilde{s}) \equiv \sigma(\overline{t})$, and so this new $\sigma(\cdot)$ is also perfect.
\end{remark}

\begin{remark}
  It seems likely that the above ideas can be extended to prove that, on a threefold at least, if $\underline{L}= (L_1,\ldots,L_{j_0})$ is fixed, then for any points $\sigma,\sigma'\in (\mathbb Q_+)^{j_0}$ we have $\mathcal M_{\sigma} \leftrightdash \mathcal M_{\sigma'}$.  If this were the case, then using the openness part of \Iref{Proposition}{prop:convex}, one concludes that for any two $\omega_{1},\omega_{2}\in \Amp(X)_{\R}$ the (projective) moduli spaces $\mathcal M_{\omega_1}$ and $\mathcal M_{\omega_2}$ are related by Thaddeus-flips through moduli spaces of sheaves.   
\end{remark}

\vspace{0.4cm}

\addtocontents{toc}{\protect\setcounter{tocdepth}{0}}

\vspace{1cm}

\end{document}